\tikzset{node distance=1.5cm}
\title{Adaptive Dynamics of Diverging Fitness Optima}
\author{}
\newtheorem{lemma}{Lemma}
\newtheorem{theorem}{Theorem}
\newtheorem{cor}{Corollary}
\theoremstyle{remark}
\newtheorem{remark}{Remark}
\newtheorem{defn}{Definition}
\author{
  Manh Hong Duong\\
  \texttt{h.duong@bham.ac.uk}
  \and
  Fabian Spill\\
  \texttt{f.spill@bham.ac.uk}
    \and
 Blaine Van Rensburg\\
\texttt{BXV114@student.bham.ac.uk}
}
\begin{document}
\maketitle

\begin{abstract}
    We analyse a non-local parabolic integro-differential equation modelling the evolutionary dynamics of a phenotypically-structured population in a changing environment. Such models arise  in a variety of contexts from climate change to chemotherapy to the ageing body. The main novelty is that there are two locally optimal traits, each of which shifts at a possibly different linear velocity. We determine sufficient conditions to guarantee extinction or persistence of the population in terms of associated eigenvalue problems. When it does not go extinct, we analyse the solution in the long time, small mutation limits. If the optimas have equal shift velocities, the solution concentrates on a point set of "lagged optima" which are strictly behind the true shifting optima. If the shift velocities are different, we determine that the solution in fact concentrates as a Dirac delta function on the positive lagged optimum with maximum lagged fitness, which depends on the true optimum and the rate of shift. Our results imply that for populations undergoing competition in temporally changing environments, both the true optimal fitness and the required rate of adaptation for each of the diverging optimal traits contribute to the eventual dominance of one trait.
\end{abstract}

\section{Introduction}
\subsection{Model and main questions}

The non-local reaction-diffusion equation considered in this paper models an asexual population undergoing natural selection in a changing environment in the presence of multiple traits that give a locally optimal reproduction rate. Each optimum shifts due to the environment, possibly with different velocities. Generally, we study models of the form:
\begin{equation}\label{eqn:GeneralAdapativeDyn}
\begin{cases*}
        \partial_{t}n(x,t)-\sigma\partial_{xx}{n(x,t)}=n(x,t)\left(A(x,t)-\rho_\varepsilon(t)\right), &$(x,t)\in{\mathbb{R}}\times{}[0,\infty),$ \\
        n(x,0)=n_{0}(x), & $x\in\mathbb{R}$.
\end{cases*}  
\end{equation}
where $n(x,t)$ represents the concentration of individuals with trait $x\in\mathbb{R}$ present at time $t$, and $\rho_\varepsilon(t)=\int_{\mathbb{R}}n(y,t)dy$ is the total population. The function $A(x,t)$ is the intrinsic growth rate of individuals with trait $x$ at time $t$, where we interpret the time dependence to be due to the changing environment. At a fixed time, the graph of $A(x,t)$ is called the fitness landscape  \cite{figueroa2018long,lorenzi2020asymptotic,iglesias2021selection,}. The evolutionary concept of fitness is nuanced, but in the context of this work it can be regarded as synonymous with relative growth rate. The growth rate is modified by competition so that  $A(x,t)-\int_{\mathbb{R}}n(y,t)dy$ is the per capita growth rate modified by competition, i.e Lotka-Volterra where the competition is across traits. Movement in trait-space is due to genetic alterations such as mutations. For simplicity, we model this using the diffusion term $\sigma\Delta{n}$ where the diffusive coefficient is given by $\sigma$. This is a common assumption, but other works have also considered more general mutational kernels as in \cite{carrillo2007adaptive} or \cite{diekmann2005dynamics}. Similar models have been studied, under differing assumptions on the per capita growth term $A(x,t)$ in many works \cite{lorenzi2020asymptotic,iglesias2021selection,Pouchol2018,magal2000mutation,roques2020adaptation}.

The case of multiple globally optimal traits is investigated in \cite{lorenzi2020asymptotic} in a static environment. The case of a single optimal trait in a shifting and periodic environment is investigated in \cite{iglesias2021selection} which builds on the work \cite{figueroa2018long} that considers a periodic environment without shift. In the case of \cite{iglesias2021selection} the results pertain to the question of whether a species can adapt fast enough to a changing environment to survive. This question of persistence has also been investigated in \cite{alfaro2017effect}, which considers a spatially shifting environment and mutations in phenotype, and \cite{berestycki2018forced}  in a purely spatial context. The authors of \cite{iglesias2021selection} find conditions on the rate of environmental shift and the maximum fitness to determine whether the population goes extinct or not. They assume that the trait- and time-dependent per capita growth rate $A(x,t)$ is given by a function $a(x-\tilde{c}t,e(t))$ where $e(t)$ is a periodic function with period $T$, and the average fitness $\bar{a}(x)=\frac{1}{T}\int{}a(x,e(t))$ has just one global optimum. By using a growth function with this linearly shifting form, it is implicitly assumed that the changing environment shifts locally optimal phenotypes equally. In other words, a given change in the environment would shift all local optima by the same distance. In reality, the change of the trait and time-dependent per capita growth function may change in much more complicated ways.

In the present work, we take a more general form of the intrinsic growth rate that allows for two locally optimal traits each of which shifts linearly at possibly different rates. This leads to the following natural questions:
\begin{enumerate}
    \item For which conditions on the shifting rate and intrinsic growth rates does the entire population go extinct?
    \item Which trait/s dominate in long time, if the population does not go extinct?
\end{enumerate}
We are ultimately interested in the situation where $A(x,t)$  has multiple time-dependent optima which shift at constant speeds in different directions, representing alternative evolutionary trajectories (i.e representing alternative phenotypes which adapt to the changing environment). To do this we study two distinct cases.

\textbf{Case 1:} For the case of multiple peaks shifting in the same direction at the same speed, the model \eqref{eqn:GeneralAdapativeDyn} reduces to the following PDE:
\begin{equation}\label{eqn:UnboundedShifting}
\begin{cases*}
        \partial_{t}n-\sigma\partial_{xx}n=n\left(a(x-\tilde{c}t)-\rho_\varepsilon(t)\right), &$(x,t)\in{\mathbb{R}}\times{}\mathbb{R}^{+},$ \\
        n(x,0)=n_{0}(x) 
\end{cases*}  
\end{equation}
Here $\tilde{c}>0$ can be interpreted as the rate at which the fitness of phenotypes responds to environment shift. We take the scaling $\sigma=\varepsilon^2$ and $\tilde{c}=\varepsilon{}c$ where $c$ is constant, and analyse the behaviour of the solution $n_\varepsilon(x,t)$ for rare mutations, i.e for $\varepsilon\rightarrow{0}$.
We assume that there are only finitely many maxima of $a(x)$ and we let $M=\{x_{1},x_{2},\dots,x_{n}\}=\text{argmax}_{x\in\mathbb{R}} a(x)$, where $x_{i}$ are distinct points. We also assume there is a fixed $\delta>0$ such that $a(x)<-\delta$ for  $|x|$ sufficiently large. We let $a_{M}:=\max_{x\in\mathbb{R}}a(x)$. 

\textbf{Case 2:} In the case where the optimal traits diverge, we will study the following PDE:
\begin{equation}\label{eqn:DivergingOptima}
\begin{cases*}
        \partial_{t}n-\sigma\partial_{xx}{n}=n\left(a_{1}(x-\tilde{c}_{1}t)+a_{2}(x-\tilde{c}_{2}t)-\delta-\rho_\varepsilon(t)\right), &$(x,t)\in{\mathbb{R}}\times{}\mathbb{R}^{+},$ \\
        n(x,0)=n_{0}(x). &
\end{cases*} 
\end{equation}
Here we assume $\tilde{c}_{1}<0<\tilde{c}_{2}$, and that $\delta$ is a positive constant. We assume each $a_{i}$ is continuous, compactly supported in $[-R_{0},R_{0}]$, and has a unique maximum value $a_{i,M}$ at $x=x_{i}$. We again take the scaling $\sigma=\varepsilon^2$,  $\tilde{c}_i=\varepsilon{}c_i$ and consider the behaviour of the solution $n_\varepsilon(x,t)$ in the small mutation limit.

\subsection{Overview of results and structure of paper}
The main results of the paper are as follows. For Case 1, \cref{thm:EigenvalueConvergence} provides neccesary and sufficient conditions that determine whether the population, in small mutation limit, either goes extinct for large time, or concentrates on a point set. We expect this set of concentration points can be further refined, and \cref{thm:RescaleConvergence} shows that a particular weighted rescaling of the limiting solution will concentrate only on the shallowest peak, i.e the $x_{i}\in\text{argmax}_{x\in\mathbb{R}}a(x)$ which minimises $|a''(x)|$. This suggests a peculiar result: the subpopulation which persists is the one which follows the shallowest moving optimum, even though that subpopulation may be itself concentrated at a trait where $|a''(x)|$ is large. This is not intuitive because it suggests the subpopulation benefits from the trait $x_{i}$ that none of the numbers of the limiting population have, since they are concentrated away from the moving optimum.

In Case 2, \cref{thm:TwoPeaksMainResult} provides necessary and sufficient conditions to determine whether the population goes extinct or persists. When it does persists, we determine explicitly the concentration point in terms of the shifting speeds and intrinsic growth rates $a_i(x)$. The interpretation is more straightforward here: a subpopulation following a particular moving optimal trait, even if not globally optimal, can persist provided the alternative trait is too fast.

The paper is structured as follows. In \cref{sec:StrategyOfProof} we will present our main results in more detail, which answer the questions we have asked regarding persistence and extinction, and outline the strategy for proving them. In \cref{sec:MainResults} we will prove  \cref{thm:EigenvalueConvergence,thm:RescaleConvergence,thm:TwoPeaksMainResult} and in \cref{sec:NumericalResults} we will illustrate our results with numerical simulations (which provide some insight into the transient dynamics which are not captured by our theorems). In \cref{sec:Discussion} we will discuss the biological interpretation of the results and suggest some future directions. For the sake of being self-contained, we collect some results from the literature in \cref{sec:Preliminaryresults}. Moreover, this provides examples of the sort of results one can expect for this problem. We
confine the more technical proofs to \cref{Appendix}.

\subsection{Biological relevance}
Our results  may  be relevant for cancer biology, with regards to the phenomenon of decoy fitness peaks \cite{higa2019decoy}. To summarise it, the hypothesis is that certain mutations (for instance in the NOTCH1 gene) may be non-cancerous but enable the mutants to survive better in an aged microenvironment allowing them to compete with pre-cancerous cells and thus suppress the development of tumours. Recent experimental evidence in support of this hypothesis can be found in \cite{colom2021mutant}. This phenomenon involves competition between two optimal phenotypes each of which is adapting to a time-dependent environment (in this case, due to ageing). Although our focus here is on the relevance of differing rates of adaption, we suggest that this framework offers a useful starting point for modelling situations where asexual populations are in competition in a temporally changing environment. 

\section{Statement of results and strategy of proof}\label{sec:StrategyOfProof}
In this section, we set up the problem more thoroughly in each of the two cases so that we can state our main theorems precisely. We also detail our strategies for proving these results. In all cases, we rely on the notion of generalised super- and sub- solutions as provided in \cite{lam2022introduction}, and the results on principle Floquet bundles from \cite{huska2008exponential}. These results are reviewed in, respectively, \cref{subsec:Prelim0} and \cref{subsec:Prelim}.
\subsection{Case 1: Two peaks shifting in with the same velocity}\label{subsubsec:SameVelocityIntro}

The system  \eqref{eqn:UnboundedShifting}, after taking the appropriate scaling of the mutation and drift terms ($\tilde{c}=c\varepsilon$, and $\sigma=\varepsilon^2$), becomes
\begin{equation}\label{eqn:UnboundedShiftingEps}
\begin{cases*}
        \partial_{t}n_\varepsilon-\varepsilon^{2}\partial_{xx}n_\varepsilon=n_\varepsilon\left(a(x-c\varepsilon{t})-\rho_\varepsilon(t)\right), &$(x,t)\in {\mathbb{R}}\times{}\mathbb{R}^{+},$ \\
        n_\varepsilon(x,0)=n_{0}(x),
\end{cases*}  
\end{equation}
where $\rho_\varepsilon(t)=\int_{}n_\varepsilon(y,t)dy$ is the total size of the population.

We make the following assumptions:
\begin{enumerate}[label=(A\arabic*)]
\item     There exist $R_{0},\delta>0$ such that $a(x)<-\delta$ provided $|x|>R_{0}$
\label{assum:A1}
\item $a(x)\in C^{2}(\mathbb{R})$ and $\Vert{}a\Vert_{L^\infty(\mathbb{R})}\leq{}d_{0}$.\label{assum:A2}
\item $0\leq{}n_{0}\leq{}e^{C_{1}-C_2|x|}$ for some positive constants $C_{1}$ and $C_{2}$, and $n_{0}\in{}C(\mathbb{R}).$ \label{assum:A3}
\item There are finite number of global maxima of $a$.\label{assum:A4}
\end{enumerate}
To state the next, and final, assumption succinctly, we must introduce some notation. We recall $M=\{x_{1},x_{2},\dots,x_{n}\}=\text{argmax}_{x\in\mathbb{R}}a(x)$, and that $a_{M}$ is the maximum of $a(x)$. We assume the optima are ordered $x_{i}<x_{i+1}$ for each $i=1,...,n-1$. Our final assumption is then

\begin{enumerate}[label=(A\arabic*)]
\setcounter{enumi}{4}
    \item For $i=2,...,n$ there are only two solutions to $a(x)=a_{M}-\frac{c^2}{4}$ in the interval $(x_{i-1},x_{i})$, and let $\bar{x}_{i}$ be the greatest of these. There is just one solution $\bar{x}_{1}$ in the interval $(-\infty,x_{1})$.\label{assum:A5}
\end{enumerate}

In preparation for the statement of our first result, we first consider some transformed problems and the associated eigenvalue problems. Firstly, we take $N_\varepsilon(x,t)=n_\varepsilon(x+\varepsilon{c}t,t)$ so that $N_\varepsilon(x,t)$ solves:
\begin{equation}\label{eqn:SameVelocityShifted}
\begin{cases*}
        \partial_{t}N_\varepsilon-\varepsilon^{2}\partial_{xx}N_\varepsilon-\varepsilon{c}\partial_{x}N_\varepsilon=N_\varepsilon\left(a(x)-\int_{\mathbb{R}}N_\varepsilon(y,t)dy\right), &$(x,t)\in {\mathbb{R}}\times{}\mathbb{R}^{+},$ \\
        N_\varepsilon(x,0)=n_{0}(x).
\end{cases*}  
\end{equation}
Next one can linearlise this by working with the equation for $m_\varepsilon(x,t)=N_\varepsilon{}(x,t)e^{\int_{0}^{t}\rho_{\varepsilon}(s)ds}$.
\begin{equation}\label{eqn:Linearised}
\begin{cases*}
       \partial_{t}m_\varepsilon-c\varepsilon\partial_{x}m_\varepsilon-\varepsilon^2\partial_{xx}m_\varepsilon=a(x)m_\varepsilon, & $z\in {\mathbb{R}},$ \\
        m_\varepsilon(x,0)=n_{0}(x).\\
\end{cases*}  
\end{equation}
Finally, one can remove the drift term by applying a Liouville transform $M_\varepsilon(x,t)=m_\varepsilon(x,t)e^\frac{cx}{2\varepsilon}$:
\begin{equation}\label{eqn:WithoutDrift}
\begin{cases*}
           \partial_{t}M_\varepsilon-\varepsilon^2\partial_{xx}M_\varepsilon=M_\varepsilon\left(a(x)-\frac{{c}^2}{4}\right), & $(x,t)\in {\mathbb{R}}\times{}\mathbb{R}^{+},$ \\
        M_\varepsilon(x,0)=n_{0}(x)e^{\frac{{c}x}{2\varepsilon}}.\\
\end{cases*}  
\end{equation}
Associated to \eqref{eqn:Linearised} and \eqref{eqn:WithoutDrift} (respecitvely)
are the following stationary eigenvalue problems
\begin{equation}\label{eqn:MoreGeneralProblemShift}
   \begin{cases*}
        -\varepsilon^2\partial_{xx}{{p}_\varepsilon}-\varepsilon{}c\partial_{x}{p}_\varepsilon-a(x){{p}_\varepsilon}={\lambda}_{\varepsilon}{{p}_\varepsilon}, &$x\in {\mathbb{R}},$ \\
        {p_\varepsilon}>0, & {}
                 \end{cases*}   
\end{equation}
and
\begin{equation}\label{eqn:MoreGeneralProblem}
   \begin{cases*}
        -\varepsilon^2\partial_{xx}{P_\varepsilon}-\varepsilon{}c\partial_{x}P_\varepsilon-\left(a(x)-\frac{c^2}{4}\right){P_\varepsilon}={\lambda}_{c,\varepsilon}{P_\varepsilon}, &$x\in {\mathbb{R}},$ \\
        {P_\varepsilon}>0. & {}
                 \end{cases*}   
\end{equation}
Our main theorem says that the solution of \eqref{eqn:SameVelocityShifted} at long times is approximately given by a multiple of eigenvector which solves \eqref{eqn:MoreGeneralProblemShift}, and does not go extinct if the eigenvalue is negative. The eigenvalue itself is shown to converge to $a_{M}-\frac{c^2}{4}$ so for sufficiently small $\varepsilon$ we can determine its sign.
\begin{theorem}
    
\label{thm:EigenvalueConvergence}
Under the assumptions \labelcref{assum:A1,assum:A2,assum:A3,assum:A4,assum:A5}:

There is a solution ${p}_\varepsilon(x)\in{}L^{1}(\mathbb{R})$ to \eqref{eqn:MoreGeneralProblemShift} which is unique if we specify  $\int_{\mathbb{R}}p_\varepsilon(y)dy=1$. For non-negative constants $a_{i}$ which sum to $1$ we have that \[{p}_\varepsilon\xrightharpoonup[\varepsilon\rightarrow{0}]{}\sum_{i=1}^{n}{a_{i}\delta_{\bar{x}_{i}}},\] 
\[\lambda_\varepsilon\xrightarrow[\varepsilon\rightarrow{0}]{}-\left(a_{M}-\frac{c^2}{4}\right),\] and, if $\lambda_\varepsilon<0$, we also have \[\rho_\varepsilon(t)\xrightarrow[t\rightarrow\infty]{}\int_{\mathbb{R}}a(y)p_\varepsilon(y)dy,\] 
and
\[\left\Vert{\frac{n_\varepsilon(x+\varepsilon{}ct,t)}{\rho_{\varepsilon}(t)}-{p}_{\varepsilon}}\right\Vert_{L_{\infty}(\mathbb{R})}\xrightarrow[t\rightarrow\infty]{}0.\]
 If $\lambda_\varepsilon<0$, then $\Vert{n_\varepsilon(.,t)}\Vert_{L^\infty{(\mathbb{R}})}\xrightarrow[t\rightarrow\infty]{}0$.

\end{theorem}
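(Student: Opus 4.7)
My strategy is to exploit the Liouville transform $p_\varepsilon(x)=e^{-cx/(2\varepsilon)}P_\varepsilon(x)$ already introduced in the setup, which converts the non-self-adjoint problem \eqref{eqn:MoreGeneralProblemShift} into the self-adjoint Schr\"odinger-type problem \eqref{eqn:MoreGeneralProblem}. Under assumption \ref{assum:A1}, the effective potential $-(a(x)-c^2/4)$ is confining on $\mathbb{R}$, so standard one-dimensional Schr\"odinger theory produces a simple principal eigenvalue $\lambda_\varepsilon$ and a strictly positive $L^2(\mathbb{R})$ principal eigenfunction $P_\varepsilon$ with fast decay at infinity. The product $p_\varepsilon=P_\varepsilon e^{-cx/(2\varepsilon)}$ therefore lies in $L^1(\mathbb{R})$, and normalising $\int p_\varepsilon=1$ fixes the scaling and gives the asserted existence and uniqueness.

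For the limit $\lambda_\varepsilon\to-(a_M-c^2/4)$ I would use the Rayleigh quotient
\[
\lambda_\varepsilon = \inf_{\phi\in H^1(\mathbb{R}),\,\phi\not\equiv 0}\,\frac{\int_\mathbb{R}\bigl(\varepsilon^2|\phi'|^2-(a(x)-c^2/4)\phi^2\bigr)dx}{\int_\mathbb{R}\phi^2\,dx}.
\]
The bound $\lambda_\varepsilon\geq-(a_M-c^2/4)$ is immediate upon dropping the gradient term and using $a\leq a_M$. The matching upper bound is obtained by testing against a Gaussian bump centred at a global maximum $x_i$ with width chosen so that the kinetic contribution is $o(1)$ while the potential term tends to $-(a_M-c^2/4)$. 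Simplicity of $\lambda_\varepsilon$ and integration of the eigenvalue equation against constants (using $p_\varepsilon, p_\varepsilon' \to 0$ at infinity) yields the useful identity $\int a\,p_\varepsilon\,dx=-\lambda_\varepsilon$, which I will reuse below.

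The concentration statement $p_\varepsilon\rightharpoonup\sum a_i\delta_{\bar{x}_i}$ I regard as the main technical obstacle. The heuristic is a WKB ansatz $P_\varepsilon\sim e^{-\phi(x)/\varepsilon}$, with $\phi$ solving the eikonal equation $(\phi')^2=a_M-a(x)$ and vanishing at each $x_i$, giving $p_\varepsilon\sim e^{-(\phi(x)+cx/2)/\varepsilon}$. The minimisers of $\phi(x)+cx/2$ satisfy $\phi'(x)=-c/2$, equivalently $a(x)=a_M-c^2/4$, which by \ref{assum:A5} are exactly the $\bar{x}_i$ (lying to the left of each $x_i$, consistent with the population lagging behind the shifting optimum). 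To make this rigorous I would construct explicit sub- and supersolutions of Gaussian/quadratic-exponential form centred at each $\bar{x}_i$ and apply the comparison principle recalled in \cref{subsec:Prelim0} to show that the mass of $p_\varepsilon$ outside any neighbourhood of $\{\bar{x}_i\}$ tends to zero; together with the normalisation $\int p_\varepsilon=1$ and non-negativity this forces weak convergence to a convex combination of Diracs on $\{\bar{x}_i\}$.

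For the long-time behaviour, I would apply the principal Floquet bundle theory of \cite{huska2008exponential} (recalled in \cref{subsec:Prelim}) to the linear equation \eqref{eqn:Linearised} for $m_\varepsilon$, yielding the exponential convergence $e^{\lambda_\varepsilon t}m_\varepsilon(\cdot,t)\to C(\varepsilon)\,p_\varepsilon$ in $L^\infty(\mathbb{R})$ for a constant $C(\varepsilon)>0$ determined by $n_0$. Since $N_\varepsilon=m_\varepsilon e^{-\int_0^t\rho_\varepsilon(s)\,ds}$, dividing by $\rho_\varepsilon(t)$ eliminates the unknown global prefactor and produces the claimed $L^\infty$ convergence of $n_\varepsilon(\,\cdot+\varepsilon ct,\,\cdot)/\rho_\varepsilon$ to $p_\varepsilon$. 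Integrating \eqref{eqn:SameVelocityShifted} over $x$ gives
\[
\rho_\varepsilon'(t)=\rho_\varepsilon(t)\Bigl(\tfrac{1}{\rho_\varepsilon(t)}\int_\mathbb{R} a(x)N_\varepsilon(x,t)\,dx-\rho_\varepsilon(t)\Bigr),
\]
a logistic-type ODE whose coefficient converges to $\int a\,p_\varepsilon=-\lambda_\varepsilon>0$ by the preceding step, producing the asserted limit $\rho_\varepsilon(t)\to\int a\,p_\varepsilon$. Finally, in the extinction regime (positive principal eigenvalue), the Floquet decay of $m_\varepsilon$ combined with $\rho_\varepsilon\geq 0$ forces $N_\varepsilon\leq m_\varepsilon\to 0$ uniformly, hence $n_\varepsilon\to 0$ in $L^\infty$.
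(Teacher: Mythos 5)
Your plan for existence and for the long-time asymptotics tracks the paper's (Liouville transform, Floquet bundles from \cite{huska2008exponential}, logistic ODE for $\rho_\varepsilon$), but the concentration step — which is the main technical content of the theorem — has a genuine gap. The first-order condition you extract from the eikonal equation, $a(x)=a_M-\tfrac{c^2}{4}$, does \emph{not} single out the points $\bar{x}_i$: by \ref{assum:A5} each interval $(x_{i-1},x_i)$ contains \emph{two} roots of this equation, of which $\bar{x}_i$ is only the larger. Squaring $\phi'(x)=-c/2$ discards exactly the sign information that distinguishes the lagged optimum $\bar{x}_i$ (where the relevant branch of the phase satisfies $\phi'=-\sqrt{a_M-a}$) from the spurious root just to the right of $x_{i-1}$ (where $\phi'=+\sqrt{a_M-a}$), so your claim that the roots "are exactly the $\bar{x}_i$" is false as written, and your proposed remedy — barriers centred at each $\bar{x}_i$ — contains no mechanism for excluding the other roots. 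The paper's \cref{lma:EigenvectorConcentration} exists precisely to perform this exclusion: it passes to the limiting Hamilton--Jacobi equation for $\psi_\varepsilon=\varepsilon\log p_\varepsilon$ (\cref{lma:ViscocitySoln}, which itself needs the uniform Lipschitz bounds proved in the appendix), uses the Lions representation formula between consecutive maxima to write $u=\psi+\tfrac{cx}{2}$ as a maximum of two explicit branches, shows the contact of $u$ with the line $\tfrac{cx}{2}$ must be an interior tangency at a root of $a=a_M-\tfrac{c^2}{4}$, and then invokes the second-order condition $\partial_{xx}u\le 0$, equivalently $a'(x^{*})\ge 0$, to discard the roots lying just to the right of each maximum. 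Without some version of this tangency argument your candidate concentration set is strictly larger than $\{\bar{x}_1,\dots,\bar{x}_n\}$; indeed the paper explicitly notes that the cruder concentration argument "includes more concentration points than expected."

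Two further points. First, the potential $-(a(x)-\tfrac{c^2}{4})$ is \emph{not} confining under \ref{assum:A1}--\ref{assum:A2}: $a$ is merely bounded with $a<-\delta$ at infinity, so the operator has nonempty essential spectrum and compact-resolvent Schr\"odinger (or Krein--Rutman) theory does not apply off the shelf. The paper instead constructs $(p_\varepsilon,\lambda_\varepsilon)$ as a locally uniform limit of Dirichlet eigenproblems \eqref{eqn:DirichletPrince} on balls via \cite{huska2008exponential} (\cref{lma:UniformR,lma:LocUniEigenvalueConvergence2}), with the exponential bounds of \cref{lma:DecayEigenvector} supplying the $L^1$ normalisability; you need some substitute for this. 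Second, in the $\rho_\varepsilon$ step you pass from $L^\infty(\mathbb{R})$ convergence of $e^{\lambda_\varepsilon t}m_\varepsilon$ to convergence of the integral $\rho_\varepsilon(t)=\int N_\varepsilon$; uniform convergence on an unbounded domain does not give convergence of integrals, and after undoing the Liouville transform the error term carries the non-integrable weight $e^{-cx/2\varepsilon}$. The paper closes this with the time-independent integrable barrier $\overline{W}_\varepsilon$ of \cref{lma:SuperSoln} and dominated convergence; your sketch omits this step.
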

Here $\delta_{x_{i}}$ is the Dirac delta measure centred at $x_{i}$.
This theorem generalises some of the results from \cite{lorenzi2020asymptotic} and \cite{iglesias2021selection}; specifically it characterises the long-term behaviour of solutions in the novel case of a linearly shifting growth rate which has multiple global optima. Because $p_\varepsilon$ concentrates on the point set $\{\bar{x}_1,...,\bar{x}_n\}$ which we refer to as the set of lagged optima since each is $\bar{x}_i$ is a candidate location for the solution to concentrate.

To prove \cref{thm:EigenvalueConvergence}, we  use the theory developed in \cite{huska2008exponential}, which we review in \cref{subsec:Prelim}, to determine the long term behaviour of solutions to \eqref{eqn:Linearised} in terms of the solution to the associated eigenvalue problem \eqref{eqn:MoreGeneralProblemShift}. The solutions of the eigenvalue problems are analysed using the Hamilton-Jacobi approach.

Although this theorem gives some information about the limiting solution we expect it can be made stronger by further restricting the set of points where the solution concentrates. This will be shown via numerical simulations. We note that in \cite{lorenzi2020asymptotic} the set of limit points is refined further, according to Proposition 3 in \cite{lorenzi2020asymptotic} (restated as \cref{lma:TopologicalPressure} in \cref{sec:Preliminaryresults}).

While unable to fully classify the limit measure we are able to show the convergence of the transformed and rescaled eigenvalue defined as
 \[\hat{p}_\varepsilon=\frac{{p}_\varepsilon{}e^{\frac{cx}{2\varepsilon}}}{\Vert{}{p}_\varepsilon{}e^{\frac{cx}{2\varepsilon}}\Vert_{L_{1}(\mathbb{R})}}.\]
We show that $\Vert{}{p}_\varepsilon{}e^{\frac{cx}{2\varepsilon}}\Vert_{L_{1}(\mathbb{R})}$ is finite in \cref{sec:MainResults}. Then by borrowing a result from semi-classical analysis (as is done in \cite{lorenzi2020asymptotic}) we prove our next theorem.

\begin{theorem}\label{thm:RescaleConvergence}
       Let $S(x)=|a''(x)|$ for $x\in{}M:=\{x_{1},...,x_{n}\}=\text{argmax}_{x\in\mathbb{R}}a(x)$. Let $M_{1}=\text{argmin}_{x_{j}}S(x_{j})=\{x_{i_{1}},x_{i_{2}},...,x_{i_{k}}\}$.

       Then, up to extraction of subsequences,
   \[\hat{p}_\varepsilon\xrightharpoonup[\varepsilon\rightarrow{0}]{}\sum_{j}a_{j}\delta_{x_{i_{j}}},\]
   where $a_{j}>0$ for each $j$ and $\sum_{j=1}^{k}a_{j}=1$.
\end{theorem}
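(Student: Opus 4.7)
The plan is to recognise $\hat p_\varepsilon$ as the $L^1$-normalised principal eigenfunction of a self-adjoint Schrödinger-type operator and then to invoke the standard semiclassical characterisation of the ground state in a multi-well potential, which singles out the wells with the flattest bottoms.

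First, I would observe that the rescaling defining $\hat p_\varepsilon$ is exactly the Liouville transform already used in going from \eqref{eqn:Linearised} to \eqref{eqn:WithoutDrift}: setting $P_\varepsilon := p_\varepsilon\, e^{cx/(2\varepsilon)}$, a direct computation shows that $P_\varepsilon$ satisfies
\[
-\varepsilon^{2}\partial_{xx} P_\varepsilon + \left(\tfrac{c^{2}}{4}-a(x)\right) P_\varepsilon \;=\; \lambda_\varepsilon\, P_\varepsilon,
\]
with the \emph{same} eigenvalue $\lambda_\varepsilon$ as in \eqref{eqn:MoreGeneralProblemShift}. Hence, after normalisation, $\hat p_\varepsilon$ is the principal eigenfunction of the self-adjoint operator $H_\varepsilon := -\varepsilon^{2}\partial_{xx} + V$ with potential $V(x) = c^{2}/4 - a(x)$. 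By \cref{thm:EigenvalueConvergence} we already have $\lambda_\varepsilon \to c^{2}/4 - a_M = \min V$ as $\varepsilon \to 0$, and by \ref{assum:A1} the potential $V$ is bounded below and bounded away from $\min V$ outside $[-R_0,R_0]$, with global minima precisely on $M = \{x_1,\dots,x_n\}$ and $V''(x_i) = |a''(x_i)|$ at each. Finiteness of $\|P_\varepsilon\|_{L^1}$, needed to define $\hat p_\varepsilon$, follows from pointwise exponential decay of $p_\varepsilon$ at $+\infty$ faster than the weight $e^{cx/(2\varepsilon)}$ grows, obtainable from a sub-/super-solution comparison exploiting \ref{assum:A1}; this is the ingredient I would establish separately in \cref{sec:MainResults}.

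The heart of the proof is then the semiclassical concentration. Under the harmonic approximation $V(x) \approx (c^{2}/4 - a_M) + \tfrac{1}{2}|a''(x_i)|(x-x_i)^{2}$ near each well $x_i$, the local ground state of $H_\varepsilon$ is, up to normalisation, a Gaussian of width $\sim \sqrt{\varepsilon}\,|a''(x_i)|^{-1/4}$ with eigenvalue
\[
\lambda_\varepsilon^{(i)} \;=\; \left(\tfrac{c^{2}}{4}-a_M\right) + \varepsilon\sqrt{\tfrac{|a''(x_i)|}{2}} + o(\varepsilon).
\]
A standard quasi-mode construction combined with a min-max comparison (the semiclassical input borrowed in \cite{lorenzi2020asymptotic}) shows that $\lambda_\varepsilon = \min_i \lambda_\varepsilon^{(i)} + o(\varepsilon)$, and that the mass of $\hat p_\varepsilon$ near any well $x_i \notin M_1$ is asymptotically negligible compared with the mass near wells in $M_1 = \text{argmin}_{x_j}|a''(x_j)|$. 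Consequently, along any weak-$\ast$ convergent subsequence in the space of probability measures, one obtains a limit supported inside $M_1$.

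Finally, the Gaussian localisation of the quasi-modes at scale $\sqrt{\varepsilon}$ around each $x_{i_j}\in M_1$ forces the weak limit to take the form $\sum_j a_j\delta_{x_{i_j}}$, while $\sum_j a_j = 1$ follows from $\|\hat p_\varepsilon\|_{L^1}=1$ combined with vanishing mass outside a neighbourhood of $M$. The positivity $a_j>0$ for each $j$ comes from the fact that all wells in $M_1$ share the same leading harmonic energy, so none of them is exponentially suppressed relative to the others. The main obstacle, and the step where the semiclassical input is essential, is the \emph{quantitative} suppression of mass at wells $x_i \in M \setminus M_1$: this requires Agmon-type exponential decay estimates for $\hat p_\varepsilon$ in the classically forbidden region, together with a careful comparison of normalisation constants of the harmonic approximations at the various wells in $M$. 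This is precisely where the existing semiclassical result from the literature, as used in \cite{lorenzi2020asymptotic}, will be imported rather than re-derived from scratch.
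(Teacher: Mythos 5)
Your identification of $\hat p_\varepsilon$ with the $L^1$-normalised principal eigenfunction $P_\varepsilon$ of the self-adjoint operator $-\varepsilon^2\partial_{xx}+\bigl(\tfrac{c^2}{4}-a(x)\bigr)$, and your plan to conclude via the semiclassical multi-well result that selects the wells minimising $|a''|$, is the same skeleton as the paper's proof. You also correctly flag the integrability of $p_\varepsilon e^{cx/(2\varepsilon)}$ as a separate ingredient (the paper establishes it via the decay bounds of \cref{lma:DecayEigenvector}).

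However, there is a gap at exactly the point the paper identifies as the main difficulty. The semiclassical result you propose to import (\cref{lma:TopologicalPressure}, i.e.\ Proposition 3 of \cite{lorenzi2020asymptotic}, which rests on \cite{holcman2006singular}) is stated only for bounded domains, respectively compact Riemannian manifolds; it does not apply directly to the eigenvalue problem \eqref{eqn:MoreGeneralProblem} posed on all of $\mathbb{R}$. You cannot simultaneously ``import rather than re-derive'' that result and apply it to the whole-line operator. The paper bridges this by approximating $P_\varepsilon$ with the Dirichlet eigenfunctions $P_{R,\varepsilon}$ of \eqref{eqn:DirichletPrince} on balls $B_R$, applying the bounded-domain concentration result to each $P_{R,\varepsilon}$, and then transferring the conclusion to $P_\varepsilon$ using the locally uniform convergence $P_{R,\varepsilon}\to P_\varepsilon$ of \cref{lma:LocUniEigenvalueConvergence2} together with the fact (\cref{lma:UniformR}) that $\lambda_{R,\varepsilon}\to -(a_M-\tfrac{c^2}{4})$ \emph{uniformly in} $R$, so that vanishing of $P_{R,\varepsilon}$ on open sets away from $M_1$ forces the same for $P_\varepsilon$. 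Your alternative of proving Agmon-type decay and redoing the harmonic/quasi-mode analysis directly on $\mathbb{R}$ is viable in principle (the ground-state energy does lie below the essential spectrum here), but it amounts to re-proving the semiclassical theorem rather than citing it, and as written your proposal neither does that work nor supplies the bounded-domain reduction. Separately, your argument that each $a_j>0$ because the wells in $M_1$ ``share the same leading harmonic energy'' is only heuristic: degenerate wells can in principle carry exponentially disparate mass through subleading (tunnelling) corrections, so positivity requires more than equality of the leading-order energies.
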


 The main difficulty in proving \cref{thm:RescaleConvergence} lies in the fact that we work on an unbounded domain, whereas Proposition 3 from \cite{lorenzi2020asymptotic} is obtained for problems in a bounded domain (or on a compact Riemannian manifold in \cite{holcman2006singular}). To overcome this, we will use the results in \cite{huska2008exponential} to construct the solution to \eqref{eqn:MoreGeneralProblem} as the limit of solutions to the following Dirichlet problems in bounded domains:

\begin{equation}\label{eqn:DirichletPrince}
   \begin{cases*}
        \partial_{t}P_{R,\varepsilon}-\varepsilon^2\partial_{xx}P_{R,\varepsilon}-\left(a(x)-\frac{c^2}{4}\right)P_{R,\varepsilon}=\lambda_{R,\varepsilon}P_{R,\varepsilon}, &$(x,t)\in{}B_{R}\times{\mathbb{R}^{+}},$ \\ 
        P_{R,\varepsilon}>0, &$(x,t)\in{}B_{R}\times{\mathbb{R}^{+}},$\\
        P_{R,\varepsilon}=0, &$(x,t)\in{}\partial{}B_{R}\times{\mathbb{R}^{+}}$ 
                 \end{cases*}   
\end{equation}

Such a construction is also used in \cite{iglesias2021selection}, and in our proof of \cref{thm:EigenvalueConvergence}. Fortunately, this procedure also enables us to approximate the eigenfunction-eigenvalue pair $(P_\varepsilon,\lambda_\varepsilon)$ in terms of $(P_{R,\varepsilon},\lambda_{R,\varepsilon})$. We can apply the same semi-classical analysis results as used in \cite{lorenzi2020asymptotic}, which require us to work in a bounded domain, to these approximate problems, allowing us to refine the set of concentration points of $P_\varepsilon$ as $\varepsilon\rightarrow{0}$.

The procedure of estimating the problem on the whole real line can similarly be used to estimate $\lambda_\varepsilon$ and obtain its limiting value as $\varepsilon\rightarrow{0}$ which will be used to determine the limiting set. We show later that the convergence of $\lambda_{R,\varepsilon}$ as $\varepsilon\rightarrow{0}$ is uniform in $R$, which allows us to adapt the methods used in \cite{lorenzi2020asymptotic} to obtain the required concentration result.

\subsection{Case 2: Two peaks shifting at different speeds}\label{subsubsec:DiffVelocityIntro}

In this case, we consider \eqref{eqn:GeneralAdapativeDyn} and set $a(x,t)=a_{1}(x-\varepsilon{c}_{1}t)+a_{2}(x-\varepsilon{c}_{2}t)-\delta$. We arrive at the equation
\begin{equation}
\begin{cases*}
        \partial_{t}n_\varepsilon-\varepsilon^2\Delta{n_\varepsilon}=n_\varepsilon\left(a_{1}(x-\varepsilon{c}_{1}t)+a_{2}(x-\varepsilon{c}_{2}t)-\delta-\rho_\varepsilon(t)\right), &$(x,t)\in{\mathbb{R}}\times{}\mathbb{R}^{+},$ \\
        n_\varepsilon(x,0)=n_{0}(x).
\end{cases*}  \label{eqn:TwoPeaksDifferentDirections}
\end{equation}
Here we assume the following:

\begin{enumerate}[label=(B\arabic*)]
    \item The functions $a_{1}$ and $a_{2}$ are in $C^{1}_{c}([-R_{0},R_{0}])$, $\Vert{}a_{1}\Vert_{L^{\infty}(\mathbb{R})}+\Vert{}a_{2}\Vert_{L^{\infty}(\mathbb{R})}-\delta<d_{0}$ for some constant $d_{0}>0$.
  Each $a_{i}$ has a single positive maximum.\label{assum:B1}
    \item The shifting rates satisfy $c_{1}<0<c_{2}$.\label{assum:SignsOfShift}
    \item  $0\leq{}n_{0}\leq{}e^{C_{1}-C_{2}|x|}$ for some positive constants $C_{1}$ and $C_{2}$.\label{assum:B3}
\end{enumerate}
We let $x_{1}=\text{argmax }a_{1}(x)$ and $x_{2}=\text{argmax }a_{2}(x)$ and denote $a_{i,M}=\max_{x\in\mathbb{R}}a_{i}(x)$.
\begin{enumerate}[label=(B\arabic*)]
    \setcounter{enumi}{3}
    \item We assume there is a unique $\bar{x}_{1}$ satisfying $\bar{x}_{1}>x_{1}$ and $a(x_1)-\frac{c^2}{4}=a(\bar{x}_{1})$, and a unique $\bar{x}_{2}$ satisfying $\bar{x}_2<x_2$ and $a(x_2)-\frac{c_2^2}{4}=a(\bar{x}_2)$. \label{assum:B4}
\end{enumerate}
Since, in the absence of the other peak, the solution would concentrate on either $\bar{x}_1$ or $\bar{x}_2$, we  again refer to these points as the lagged optima.

This model represents mutation and competition in the presence of two alternative shifting optima in the fitness landscape. In other words, at a given time $t$, there are two locally optimum traits, $x_{1}+\varepsilon{c_1}t$ and $x_{2}+\varepsilon{c_2}t$. In light of the previous results, we expect to obtain conditions on $a_{i,MD}$ and $c_{i}$ that determine whether the population goes extinct or not, and where the population concentrates if it does not.

To state our main result for this case, we need the following transformed problems. In what follows, we suppress the $\varepsilon$ subscripts for ease of notation. We let $N_{i}(x,t)=n(\varepsilon{}c_{i}t+x,t)$, $C=c_{2}-c_{1}$ and $m_{i}=N_{i}(x,t)e^{\int_{0}^{t}\rho_{\varepsilon}(s)ds}$. Then $m_{i}$ solves: 
\begin{equation*}\label{eqn:TwoPeaksDifferentDirectionsLin}
\begin{cases*}
        \partial_{t}m_{i}-\varepsilon^2\partial_{xx}{m}_{i}-\varepsilon{}c_{i}\partial_{x}m_{i}=m_{i}\left(a_{i}(x)+a_{i'}(x+(-1)^{i}\varepsilon{C}t)-\delta\right) &$(x,t)\in{\mathbb{R}}\times{}\mathbb{R}^{+},$ \\
        m_{i}(x,0)=n_{0}(x).
\end{cases*}
\end{equation*}
where $i'$ is $1$ if $i=2$ and $2$ if $i=1$. Repeating similar transformations as before, by letting ${M}_{i}=m_{i}e^{\frac{c_{i}x}{2\varepsilon}}$, we find that ${M}_{i}$ solves
\begin{equation*}\label{eqn:TwoPeaksDifferentDirectionsLinLiouville}
\begin{cases*}
        \partial_{t}{M}_{i}-\varepsilon^2\partial_{xx}{{M}_{i}}={M}_{i}\left(a_{i}(x)+a_{i'}(x+(-1)^{i}\varepsilon{C}t)-\frac{c_{i}^2}{4}-\delta\right) &$(x,t)\in{\mathbb{R}}\times{}\mathbb{R}^{+},$ \\
        {M}_{i}(x,0)=n_{0}(x)e^{\frac{c_{i}x}{2\varepsilon}}.
\end{cases*}  
\end{equation*}
We also need the following eigenvalue problems,
\begin{equation}\label{eqn:TwoPeaksDifferentDirectionsLinEigen}
\begin{cases*}
        -\varepsilon^2\partial_{xx}{p}_{i}-\varepsilon_{}c_{i}\partial_{x}{p}_{i}={p}_{i}\left(\lambda_{i,\varepsilon}+a_{i}(x)-\delta\right) &$x\in{\mathbb{R}},$ \\
       p_{i,\varepsilon}>0 &    $x\in{\mathbb{R}},$
\end{cases*}  
\end{equation}
and
\begin{equation}\label{eqn:TwoPeaksDifferentDirectionsLinLiouvilleEigen}
\begin{cases*}
        -\varepsilon^2\partial_{xx}{p_{i,\varepsilon}}=p_{i,\varepsilon}\left(\lambda_{i,\varepsilon}+a_{i}(x)-\frac{c_{i}^2}{4}-\delta\right) &$x\in{\mathbb{R}},$ \\
       p_{i,\varepsilon}>0 &$x\in{\mathbb{R}}.$\\
\end{cases*}  
\end{equation}
We note that, once we fix a normalisation of $p_{i,\varepsilon}$, we have that $p_{i,\varepsilon}=w_\varepsilon{}p_{i,\varepsilon}e^{\frac{-c_ix}{2\varepsilon}}$ for some positive constant $w_\varepsilon$.

We proceed similarly to \cite{figueroa2018long}, and obtain estimates on local growth rate near each of the dominant traits. Our main result determines conditions on the survival of the population, in the long-time, rare mutation limit. When it does not go extinct, we show that it concentrates on the lagged optima with maximum  fitness.

\begin{theorem}\label{thm:TwoPeaksMainResult}
Under the assumptions \labelcref{assum:B1,assum:SignsOfShift,assum:B3,assum:B4}, we have that
\[p_{1,\varepsilon}(x)\xrightharpoonup[\varepsilon\rightarrow{0}]{}\delta(x-\bar{x}_1),~~\lambda_{1,\varepsilon}\xrightarrow[\varepsilon\rightarrow{0}]{}a_1(x_1)-\frac
{c_1^2}{4}-\delta.\] If $a_{1}(x_{1})-\frac{c_{1}^2}{4}>\max\{\delta,a_{2}(x_{2})-\frac{c_{2}^2}{4}\}$ then the population persists in the long-time limit; in fact
\[\left\Vert\frac{n_\varepsilon(x+\varepsilon{}c_{1}t,t)}{\rho_\varepsilon(t)}-p_{1,\varepsilon}(x)\right\Vert_{L^{\infty}(\mathbb{R})}\xrightarrow[t\rightarrow\infty]{}0,\]
and
   \[\rho_\varepsilon(t)\xrightarrow[t\rightarrow\infty]{}\int_{}a_1(y)p_{1,\varepsilon}(y)dy.\]

   If $a_{i}(x_{i})-\frac{c_{i}^2}{4}-\delta\leq{0}$ for $i=1,2$ then $n_\varepsilon$ instead converges uniformly to $0$.

\end{theorem}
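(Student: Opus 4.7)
The plan is to decouple the two peaks asymptotically: because $a_1$ and $a_2$ are compactly supported in $[-R_0,R_0]$ by \ref{assum:B1} and the peaks separate at relative speed $\varepsilon(c_2-c_1)>0$ by \ref{assum:SignsOfShift}, on every compact set the two moving humps cease to overlap after a finite time, so locally the equation reduces to a single-peak problem of the Case 1 form. I would begin with the eigenvalue statements. The problem \eqref{eqn:TwoPeaksDifferentDirectionsLinEigen} for $p_{i,\varepsilon}$ has exactly the structure of \eqref{eqn:MoreGeneralProblemShift} with $a$ replaced by $a_i-\delta$ and $c$ by $c_i$, and its Liouville transform \eqref{eqn:TwoPeaksDifferentDirectionsLinLiouvilleEigen} concentrates around the unique maximum of $a_i-c_i^2/4-\delta$ at $x_i$. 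Running the Dirichlet-approximation construction on balls $B_R$ used for \cref{thm:EigenvalueConvergence}, together with the WKB/Hamilton-Jacobi concentration argument and the uniform-in-$R$ eigenvalue control from the proof of \cref{thm:RescaleConvergence}, yields existence of a unique normalised pair $(p_{i,\varepsilon},\lambda_{i,\varepsilon})$ with $\lambda_{i,\varepsilon}\to -(a_i(x_i)-c_i^2/4-\delta)$ and $p_{i,\varepsilon}\rightharpoonup\delta_{\bar{x}_i}$.

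For persistence, I would move into the frame co-moving with peak 1 by setting $N_1(x,t)=n_\varepsilon(x+\varepsilon c_1 t,t)$, which satisfies
\begin{equation*}
\partial_t N_1-\varepsilon^2\partial_{xx}N_1-\varepsilon c_1\partial_x N_1 = N_1\bigl(a_1(x)+a_2(x-\varepsilon(c_2-c_1)t)-\delta-\rho_\varepsilon(t)\bigr).
\end{equation*}
For any compact $K$ there is a time $T_K$ after which $a_2(x-\varepsilon(c_2-c_1)t)\equiv 0$ on $K$, so on $[T_K,\infty)\times K$ the equation is precisely the Case 1 shifted equation \eqref{eqn:SameVelocityShifted} with growth $a_1-\delta$ and drift $c_1$. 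Sandwiching $N_1$ between generalised sub- and super-solutions built from multiples of $p_{1,\varepsilon}$ as in \cref{subsec:Prelim0}, and invoking the principal Floquet bundle exponential separation estimates of \cref{subsec:Prelim}, then forces $N_1(x,t)/\rho_\varepsilon(t)\to p_{1,\varepsilon}$ in $L^\infty$ whenever $\lambda_{1,\varepsilon}<0$, which under \ref{assum:B4} is guaranteed by $a_1(x_1)-c_1^2/4>\delta$. Integrating the original equation against this limit profile then gives the claimed limit of $\rho_\varepsilon(t)$.

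The main obstacle is ruling out competition from peak 2: its own exponential growth mode is also present, and one must show it is strictly dominated. Under the stated condition $a_1(x_1)-c_1^2/4>a_2(x_2)-c_2^2/4$, the eigenvalue $\lambda_{2,\varepsilon}$ of the analogous frame-2 problem satisfies $\lambda_{2,\varepsilon}>\lambda_{1,\varepsilon}$ for small $\varepsilon$, hence any mass carried by the frame of peak 2 grows strictly slower. I would construct a global super-solution of the form
\begin{equation*}
Ae^{-\lambda_{1,\varepsilon}t}\bigl(p_{1,\varepsilon}(x-\varepsilon c_1 t)+\eta\, p_{2,\varepsilon}(x-\varepsilon c_2 t)\bigr)
\end{equation*}
together with a matching sub-solution localised near peak 1 only; the strict inequality of eigenvalues forces the peak-2 contribution to vanish under normalisation by $\rho_\varepsilon$. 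This is the most delicate step, because during the $O(1/\varepsilon)$ initial window the two supports overlap and the cross term $a_1+a_2-\delta$ can be positive; the comparison therefore has to be carried out on finite balls $B_R$ and then passed to $R\to\infty$ while tracking uniform-in-$R$ growth estimates for both eigenpairs, mirroring the strategy used in \cref{thm:RescaleConvergence}.

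Finally, for extinction, if $a_i(x_i)-c_i^2/4-\delta\le 0$ for both $i=1,2$ then $\lambda_{i,\varepsilon}\ge 0$ for small $\varepsilon$, so the same comparison scheme yields a non-growing super-solution in each frame. After the two moving supports separate, these per-frame bounds glue into a single global super-solution that decays to zero, giving $\Vert n_\varepsilon(\cdot,t)\Vert_{L^\infty}\to 0$. The bounded interaction during the overlap phase inflates the leading constant only by a finite factor, because $\Vert a_1+a_2\Vert_{L^\infty}\le d_0$ is bounded independently of $t$ by \ref{assum:B1}, so the overall decay rate is preserved.
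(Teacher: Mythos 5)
Your overall architecture (reduce the eigenvalue statements to the single-peak machinery, pass to the frame of peak 1, and control peak 2 through the eigenvalue gap) matches the paper's intent, but the step you yourself flag as most delicate contains a genuine gap: the proposed global supersolution $Ae^{-\lambda_{1,\varepsilon}t}\bigl(p_{1,\varepsilon}(x-\varepsilon c_1 t)+\eta\,p_{2,\varepsilon}(x-\varepsilon c_2 t)\bigr)$ does not satisfy the required differential inequality for the linearised equation $\partial_t m-\varepsilon^2\partial_{xx}m=m\bigl(a_1(x-\varepsilon c_1t)+a_2(x-\varepsilon c_2t)-\delta\bigr)$. Writing $v_i=e^{-\lambda_{1,\varepsilon}t}p_{i,\varepsilon}(x-\varepsilon c_i t)$ and using the eigenvalue equations \eqref{eqn:TwoPeaksDifferentDirectionsLinEigen}, one computes
\begin{equation*}
\partial_t(v_1+\eta v_2)-\varepsilon^2\partial_{xx}(v_1+\eta v_2)-(v_1+\eta v_2)\bigl(a_1+a_2-\delta\bigr)=-v_1a_2+\eta v_2(\lambda_{2,\varepsilon}-\lambda_{1,\varepsilon})-\eta v_2 a_1,
\end{equation*}
with $a_i$ evaluated at $x-\varepsilon c_it$. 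On the support of $a_1(\cdot-\varepsilon c_1t)$, where $a_2$ vanishes for large $t$, this equals $\eta v_2(\lambda_{2,\varepsilon}-\lambda_{1,\varepsilon}-a_1)$, which is strictly negative near the maximum of $a_1$ since $\lambda_{2,\varepsilon}-\lambda_{1,\varepsilon}\to(a_1(x_1)-\tfrac{c_1^2}{4})-(a_2(x_2)-\tfrac{c_2^2}{4})<a_{1,M}$ in the regime of interest; during the $O(1/\varepsilon)$ overlap window the term $-v_1a_2$ is also uncompensated. The same obstruction defeats the gluing step in your extinction argument: the natural combination $e^{-\lambda_{1,\varepsilon}t}p_{1,\varepsilon}(x-\varepsilon c_1t)+e^{-\lambda_{2,\varepsilon}t}p_{2,\varepsilon}(x-\varepsilon c_2t)$ produces cross terms $-u_1a_2-u_2a_1\le 0$ and is therefore a \emph{sub}solution. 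Sums of the single-peak modes systematically fail in the direction you need.

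Relatedly, invoking the Floquet-bundle corollary with $\phi=e^{\eta t}p_{1,\varepsilon}$ is not yet justified for the two-peak equation in frame 1, because $p_{1,\varepsilon}$ is a positive entire solution only of the equation with $a_2$ deleted, and the observation that the equation reduces to Case 1 on every compact after a finite time does not feed into the global comparison and exponential-separation arguments, since $T_K$ is not uniform in $K$. The paper closes exactly this gap with \cref{lma:LocalisedSuperSoln}: a WKB supersolution $e^{\overline{\phi}/\varepsilon}$ whose phase is built from two pieces tied to the two moving frames, engineered (via the choice of $\nu_1,\nu_2,\eta$ and the tracking of the junction point $z(t)$, which is kept in the gap where both $a_i$ vanish) to yield exponential decay of $W_1$ on the moving support of $a_2$. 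Only with that estimate in hand can $a_2(x-\varepsilon Ct)W_1$ be treated as an exponentially small perturbation and the single-peak theory applied, as in \cref{lma:ConvergenceofTransform}. You would need to supply a construction of this type, valid on the support of $a_1$ and through the overlap window, before the remainder of your outline goes through.
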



To prove \cref{thm:TwoPeaksMainResult}, we first construct a supersolution of a related problem which vanishes on the support of $a_2(x-\varepsilon{}c_2t)$, which enables us to reduce back to the single peak case. To end this section, we will offer some heuristics in support of \cref{thm:TwoPeaksMainResult}. Suppose that, as might be expected from \cite{figueroa2018long}, we have obtained the local growth rates in a ball of radius $R$:
\[N_i(x,t)e^{\int_{0}^{t}\rho_{\varepsilon}(s)ds+\lambda_{i,\varepsilon}t}\xrightarrow[t\rightarrow\infty]{}\alpha_{i}{p}_{i,\varepsilon}~~\forall|x|<R.\]
When extinction does not occur, we expect that $\rho_{\varepsilon}(t)\xrightarrow[t\rightarrow\infty]{}\rho_\infty$ for some positive constant, and, of course that $0<\Vert{}n\Vert_{L^{\infty}(\mathbb{R})}<\infty$. These facts together mean the only possible value of $\rho_\infty$ is $\text{max}(-\lambda_{1,\varepsilon},-\lambda_{2})$, since any other value would mean extinction (or blow up).

\section{Proofs of main results}\label{sec:MainResults}

\subsection{Proof of \cref{thm:EigenvalueConvergence,thm:RescaleConvergence}}\label{subsec:Proof1}

For Case 1, \cref{thm:EigenvalueConvergence} is a consequence of \cref{lma:UniformR,lma:EigenvectorConcentration,lma:ConvergenceToEigenvalue,lma:rhoConvergence} below. From here we assume fix $p_\varepsilon$ as the solution to \eqref{eqn:MoreGeneralProblemShift} such that $\Vert{p_\varepsilon}\Vert_{L^{1}(\mathbb{R})}=1$.

\begin{lemma}\label{lma:UniformR}
Let $\lambda_{R,\varepsilon}$ be the eigenvalue from \eqref{eqn:DirichletPrince}. Then $\lambda_{R,\varepsilon}\xrightarrow[\varepsilon\rightarrow{0}]{}a_{M}-\frac{c^2}{4}$ and this convergence is uniform in $R$.
\end{lemma}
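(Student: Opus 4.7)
The plan is to work with $\lambda_{R,\varepsilon}$ as the principal (i.e.\ largest) Dirichlet eigenvalue of the self-adjoint Schrödinger-type operator $\mathcal{L}_{\varepsilon}:=\varepsilon^{2}\partial_{xx}+(a(x)-c^{2}/4)$ on $B_R$. Standard spectral theory together with the Krein-Rutman theorem guarantee that this eigenvalue is simple with a positive eigenfunction, and is given by the Rayleigh variational principle
\[
\lambda_{R,\varepsilon}=\sup_{\phi\in H^{1}_{0}(B_{R}),\,\phi\not\equiv 0}\frac{\int_{B_{R}}\bigl[-\varepsilon^{2}|\phi'|^{2}+(a(x)-c^{2}/4)\phi^{2}\bigr]dx}{\int_{B_{R}}\phi^{2}\,dx}.
\]
The strategy is then to bracket $\lambda_{R,\varepsilon}$ by matching upper and lower bounds, each controlled independently of $R$ once $R$ exceeds some fixed threshold.

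The upper bound is immediate from the variational characterisation. Since $-\varepsilon^{2}|\phi'|^{2}\leq 0$ and $a(x)-c^{2}/4\leq a_{M}-c^{2}/4$ pointwise, the numerator is bounded by $(a_{M}-c^{2}/4)\int\phi^{2}\,dx$, so $\lambda_{R,\varepsilon}\leq a_{M}-c^{2}/4$ for every $R>0$ and $\varepsilon>0$. This bound is already uniform in both parameters, so no further work is required.

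For the lower bound I need a test function that saturates the Rayleigh quotient as $\varepsilon\to 0$. Fix a global maximum $x_{1}\in M$ and a radius $R_{0}>0$ with $x_{1}\in B_{R_{0}/2}$. For any $R\geq R_{0}$, insert the truncated Gaussian
\[
\phi_{\varepsilon}(x)=\chi(x)\exp\Bigl(-\frac{(x-x_{1})^{2}}{2\varepsilon}\Bigr),
\]
where $\chi\in C^{\infty}_{c}(B_{R_{0}})$ equals $1$ on $B_{R_{0}/2}$. Elementary Gaussian moment computations (the cutoff contributing only $O(e^{-c/\varepsilon})$ corrections because of the super-exponential decay of the Gaussian tails) give $\varepsilon^{2}\int|\phi_{\varepsilon}'|^{2}/\int\phi_{\varepsilon}^{2}=\varepsilon/2+O(\varepsilon^{3/2})$; combined with the $C^{2}$-expansion $a(x)=a_{M}+\tfrac{1}{2}a''(x_{1})(x-x_{1})^{2}+O(|x-x_{1}|^{3})$ furnished by \labelcref{assum:A2}, the potential term yields $\int(a-c^{2}/4)\phi_{\varepsilon}^{2}/\int\phi_{\varepsilon}^{2}=a_{M}-c^{2}/4-|a''(x_{1})|\varepsilon/4+O(\varepsilon^{3/2})$. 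Substituting into the Rayleigh quotient produces the lower bound
\[
\lambda_{R,\varepsilon}\geq a_{M}-\frac{c^{2}}{4}-C\varepsilon,
\]
with $C$ depending only on $a$ and, crucially, independent of $R\geq R_{0}$.

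Uniformity in $R$ follows at once. The variational characterisation gives the domain monotonicity $R_{1}\leq R_{2}\Rightarrow\lambda_{R_{1},\varepsilon}\leq\lambda_{R_{2},\varepsilon}$ through the inclusion $H^{1}_{0}(B_{R_{1}})\hookrightarrow H^{1}_{0}(B_{R_{2}})$. Because the test function $\phi_{\varepsilon}$ has support in $B_{R_{0}}$, it is admissible for every $R\geq R_{0}$, so the lower bound witnessed by $\phi_{\varepsilon}$ is the same for all such $R$. Together with the universal upper bound this gives $|\lambda_{R,\varepsilon}-(a_{M}-c^{2}/4)|\leq C\varepsilon$ uniformly on $R\geq R_{0}$, which is the required uniform convergence. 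The main point of care is not analytical but organisational: one must balance the Gaussian width parameter so that the gradient cost $\varepsilon^{2}/(2\beta)$ and the curvature cost of order $\beta$ are of the same order $\varepsilon$ (the choice $\beta=\varepsilon$ works), and verify that the smooth cutoff contributes only negligible boundary errors, which follows from the super-exponential decay of Gaussian tails.
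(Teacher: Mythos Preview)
Your proof is correct and follows essentially the same route as the paper: the Rayleigh variational characterisation gives one bound for free, and a test function concentrated at a global maximum of $a$ with support contained in a fixed ball $B_{R_0}$ gives the matching bound uniformly in $R\ge R_0$. The paper uses a slightly different trial function (a Gaussian with its cutoff rescaled at width $\varepsilon^{1/2}$ rather than your fixed cutoff) and does not extract the explicit $O(\varepsilon)$ rate you obtain, but the argument is otherwise the same.
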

\begin{lemma}\label{lma:EigenvectorConcentration}
    Assume $a_{M}-\frac{c^2}{4}>0$ and  \ref{assum:A1},\ref{assum:A2},\ref{assum:A4},\ref{assum:A5}, then 
    \[{p}_\varepsilon\xrightharpoonup[\varepsilon\rightarrow0]{}\sum_{}a_{i}\delta_{\bar{x}_{i}}.\]
    where the $a_{i}$ are non-negative and sum to $1$.
\end{lemma}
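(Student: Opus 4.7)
The plan is to apply the Hamilton--Jacobi (WKB) method directly to the eigenvalue equation \eqref{eqn:MoreGeneralProblemShift}. First, I would establish tightness of $\{p_\varepsilon\}$. Using the bound on $\lambda_\varepsilon$ from \cref{lma:UniformR} together with \ref{assum:A1} (which gives $a(x) < -\delta$ for $|x| > R_0$), one can construct explicit exponential super-solutions of the eigenvalue equation on each of $(R_0,\infty)$ and $(-\infty,-R_0)$. A comparison principle then yields a uniform bound of the form $p_\varepsilon(x) \leq C\exp(-\kappa(|x|-R_0)/\varepsilon)$ for $|x|>R_0$. Combined with $\int p_\varepsilon = 1$, this gives tightness of $\{p_\varepsilon\}$ as $\varepsilon \to 0$, so up to a subsequence $p_\varepsilon \rightharpoonup \mu$ for some probability measure $\mu$ on $\mathbb{R}$.

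Next, I would introduce the WKB change of variables $u_\varepsilon := -\varepsilon \log p_\varepsilon$. A direct computation shows
\[
(u_\varepsilon')^2 - c\, u_\varepsilon' - \varepsilon u_\varepsilon'' + a(x) = -\lambda_\varepsilon.
\]
Normalize by adding a constant so that $\inf_\mathbb{R} u_\varepsilon = 0$. A Bernstein-type argument (multiplying the equation by a suitable cutoff and using boundedness of $a$ and $\lambda_\varepsilon$) gives local $L^\infty$ bounds on $u_\varepsilon'$ independent of $\varepsilon$, yielding local equicontinuity. By Arzel\`a--Ascoli, $u_\varepsilon \to u$ locally uniformly along a subsequence, where $u \geq 0$ is a viscosity solution of the first-order Hamilton--Jacobi equation
\[
(u' - c/2)^2 = a_M - a(x),
\]
using $-\lambda_\varepsilon \to a_M - c^2/4$ from \cref{lma:UniformR}. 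The identity $p_\varepsilon = \exp(-u_\varepsilon/\varepsilon)$ together with $u_\varepsilon \to u$ then forces $\mathrm{supp}(\mu) \subseteq \{u = 0\}$.

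Finally, I would identify the zero set of $u$. At any $x_0$ with $u(x_0)=0$, the minimum property combined with the viscosity sub- and super-solution inequalities force $u'(x_0)=0$ and hence $a(x_0) = a_M - c^2/4$; thus $\{u=0\}$ is contained in the finite solution set of this equation. To single out the specific $\bar{x}_i$ from \ref{assum:A5}, I would integrate the ODE $u' = c/2 \pm \sqrt{a_M - a(x)}$ on each interval between consecutive local maxima $x_i$ of $a$: the viscosity requirement, $u\geq 0$, and the growth of $u$ at infinity (from the tail estimate) force a specific branch to be chosen at each $x_i$, and a branch analysis shows that $u$ returns to zero precisely at $\bar{x}_i$, the greatest root of $a(x)=a_M-c^2/4$ to the left of $x_i$. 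Since $\mu$ is a probability measure supported on the finite set $\{\bar{x}_1,\dots,\bar{x}_n\}$, it takes the claimed form $\mu = \sum_i a_i\delta_{\bar{x}_i}$ with $a_i\geq 0$ and $\sum_i a_i = 1$. The main obstacle is this final identification: viscosity theory provides only one-sided information at a zero, so one must track the global structure of $u$ by integrating the HJ equation along $\mathbb{R}$, using \ref{assum:A5} and the sign of $c$ to exclude the smaller root in each interval and any candidate zero lying to the right of $x_n$.
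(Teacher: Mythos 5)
Your proposal follows essentially the same route as the paper: the WKB ansatz $p_\varepsilon=e^{\psi_\varepsilon/\varepsilon}$ (your $u_\varepsilon=-\psi_\varepsilon$), exponential tail bounds plus Bernstein-type Lipschitz estimates yielding locally uniform convergence to a viscosity solution of $(u'-c/2)^2=a_M-a(x)$, and then a branch analysis on each interval between consecutive maxima, using \ref{assum:A5}, to discard all roots of $a(x)=a_M-\tfrac{c^2}{4}$ except the $\bar{x}_i$. The step you flag as the main obstacle is resolved in the paper by the Lions representation formula, which writes $u$ explicitly as a maximum of two integral expressions between consecutive maxima and excludes the smaller root $x_i^{+}$ via a tangency-plus-concavity condition ($\partial_x u=\tfrac{c}{2}$, $\partial_{xx}u<0$) at the intersection with the line $\tfrac{cx}{2}$ — precisely the sign-of-$a'$ dichotomy your branch analysis would need.
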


\begin{lemma}\label{lma:ConvergenceToEigenvalue}   Under the assumptions $a_{M}-\frac{c^2}{4}>0$, $\varepsilon$ is small enough, and \ref{assum:A1}-\ref{assum:A5} the normalized population will converge to  $p_\varepsilon$ as $t\rightarrow\infty$, i.e
         \[\left\Vert{\frac{N_\varepsilon}{\rho_{\varepsilon}(t)}-{p}_{\varepsilon}}\right\Vert_{L_{\infty}(\mathbb{R})}\xrightarrow[t\rightarrow\infty]{}0.\]
\end{lemma}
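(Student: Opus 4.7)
The plan is to eliminate the nonlocal competition by the Cauchy-type transformation $m_\varepsilon(x,t) := N_\varepsilon(x,t) \exp\!\left(\int_0^t \rho_\varepsilon(s)\,ds\right)$, so that $m_\varepsilon$ satisfies the linear autonomous equation \eqref{eqn:Linearised}. The key identity
\[
\frac{N_\varepsilon(x,t)}{\rho_\varepsilon(t)} \;=\; \frac{m_\varepsilon(x,t)}{\|m_\varepsilon(\cdot,t)\|_{L^1(\mathbb{R})}}
\]
shows that the nonlocal denominator drops out, so proving the lemma reduces to an asymptotic statement for the linear semigroup generated by $L := \varepsilon^2\partial_{xx} + \varepsilon c\,\partial_x + a(x)$. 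This operator's principal eigenpair is precisely $(-\lambda_\varepsilon, p_\varepsilon)$ from \eqref{eqn:MoreGeneralProblemShift}, which we normalize by $\int p_\varepsilon = 1$.

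I would then apply the principal Floquet bundle theory of \cite{huska2008exponential} (recalled in \cref{subsec:Prelim}) to the linear equation for $m_\varepsilon$. Assumptions \ref{assum:A1},\ref{assum:A2} (boundedness of $a$ and $a(x)<-\delta$ at infinity) ensure the hypotheses of the exponential separation theorem on $\mathbb{R}$: $L$ admits a simple, isolated principal eigenvalue $-\lambda_\varepsilon$ separated from the rest of the spectrum, yielding
\[
e^{\lambda_\varepsilon t}\,m_\varepsilon(\cdot,t) \;\longrightarrow\; \alpha\,p_\varepsilon \qquad \text{as } t\to\infty,
\]
with $\alpha>0$: since $n_0\ge 0$ is nontrivial by \ref{assum:A3} and $p_\varepsilon>0$, the strong maximum principle forces a positive overlap with the principal bundle. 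Convergence in $L^\infty_{\mathrm{loc}}$ follows from the theorem together with standard interior parabolic regularity.

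The concluding step is to divide numerator and denominator by $e^{-\lambda_\varepsilon t}$ and promote the convergence of the ratio to all of $\mathbb{R}$. The numerator converges in $L^\infty_{\mathrm{loc}}$ by the previous step; the denominator requires $e^{\lambda_\varepsilon t}\|m_\varepsilon(\cdot,t)\|_{L^1}\to\alpha$, which is where the principal obstacle lies: on the unbounded domain $\mathbb{R}$, uniform-in-$t$ control of the tails of $e^{\lambda_\varepsilon t}m_\varepsilon(\cdot,t)$ is needed but not furnished directly by Huska's theorem. I would close this gap by constructing an explicit exponentially decaying supersolution of \eqref{eqn:Linearised}. For $\varepsilon$ small one has $\lambda_\varepsilon<\delta$ by \cref{lma:UniformR} (since $\lambda_\varepsilon\to -(a_M-c^2/4)<0$), so $a(x)+\lambda_\varepsilon<0$ for $|x|>R_0$ by \ref{assum:A1}; a comparison function of the form $A\,e^{-\beta|x|}$ with $\beta$ chosen so that $\varepsilon^2\beta^2-\varepsilon c\beta+a(x)+\lambda_\varepsilon\le 0$ outside a compact set then dominates $e^{\lambda_\varepsilon t}m_\varepsilon$ uniformly in $t$, using \ref{assum:A3} to dominate the initial data. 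The resulting uniform exponential tail estimate delivers the missing $L^1$-convergence of the normalizing factor, and combined with $L^\infty_{\mathrm{loc}}$-convergence of the numerator produces the desired $L^\infty(\mathbb{R})$ convergence of $N_\varepsilon/\rho_\varepsilon$ to $p_\varepsilon$.
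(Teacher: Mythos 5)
Your proposal is correct and follows essentially the same route as the paper: linearize with the integrating factor $e^{\int_0^t\rho_\varepsilon(s)\,ds}$, invoke the principal Floquet bundle theory of \cite{huska2008exponential} to identify the long-time profile as $\alpha_\varepsilon p_\varepsilon$, and close the $L^1$-normalization step with an explicit integrable supersolution (the paper's \cref{lma:SuperSoln}) together with dominated convergence. The only technical wrinkles the paper adds are that it first applies the Liouville transform so the Floquet theory is used on the drift-free equation, and that it works with $u_\eta=W_\varepsilon e^{\eta t}$ so that the entire solution $e^{\eta t}P_\varepsilon$ satisfies the exponential-growth hypothesis (C2), which a static eigenfunction alone would not.
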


\begin{lemma}\label{lma:rhoConvergence} Under the assumptions $a_{M}-\frac{c^2}{4}>0$, $\varepsilon$ is small enough, and \ref{assum:A1}-\ref{assum:A5} the total population $\rho_\varepsilon(t)$ will convergence to a finite, positive value
    \[\rho_\varepsilon(t)\xrightarrow[t\rightarrow\infty]{}\int_{\mathbb{R}}a(y)p_\varepsilon(y)dy.\]
\end{lemma}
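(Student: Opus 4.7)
\textbf{Proof plan for \cref{lma:rhoConvergence}.} The strategy is to convert the evolution of $\rho_\varepsilon$ into an asymptotically autonomous logistic ODE whose effective carrying capacity is identified via \cref{lma:ConvergenceToEigenvalue} together with the eigenvalue equation \eqref{eqn:MoreGeneralProblemShift}.

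First I would integrate \eqref{eqn:SameVelocityShifted} over $\mathbb{R}$: granted sufficient spatial decay of $N_\varepsilon(\cdot,t)$ and $\partial_x N_\varepsilon(\cdot,t)$ at infinity (obtained from assumption \ref{assum:A3} on $n_0$ together with a supersolution of the form $C e^{-\kappa|x|}$, which is admissible by \ref{assum:A1}--\ref{assum:A2} since the reaction term becomes strongly negative outside $[-R_0,R_0]$), the diffusion and drift contributions vanish and we obtain the identity
\begin{equation*}
\rho_\varepsilon'(t) = \rho_\varepsilon(t)\bigl[\phi_\varepsilon(t) - \rho_\varepsilon(t)\bigr], \qquad \phi_\varepsilon(t) := \int_{\mathbb{R}} a(x)\,\frac{N_\varepsilon(x,t)}{\rho_\varepsilon(t)}\,dx.
\end{equation*}

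Next I would identify $\lim_{t\to\infty}\phi_\varepsilon(t) = \phi_* := \int_{\mathbb{R}} a(y)p_\varepsilon(y)\,dy$. By \cref{lma:ConvergenceToEigenvalue}, the ratio $u_\varepsilon(\cdot,t) := N_\varepsilon(\cdot,t)/\rho_\varepsilon(t)$ converges to $p_\varepsilon$ in $L^{\infty}(\mathbb{R})$, but because $a$ is only bounded rather than integrable, this $L^\infty$ control is not by itself enough. Writing $\int_{\mathbb{R}} a(u_\varepsilon-p_\varepsilon)\,dx$ as a sum of integrals over $\{|x|\le R\}$ and $\{|x|>R\}$, the former tends to zero as $t\to\infty$ for each fixed $R$ by uniform convergence, the tail of $p_\varepsilon$ is small since $p_\varepsilon\in L^1$ with $\|p_\varepsilon\|_{L^1}=1$, and the tail of $u_\varepsilon$ is controlled uniformly in $t$ by combining the exponential-decay supersolution for $N_\varepsilon$ with the uniform positive lower bound on $\rho_\varepsilon(t)$ already required by \cref{lma:ConvergenceToEigenvalue}. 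Independently, testing \eqref{eqn:MoreGeneralProblemShift} against the constant function $1$ and using decay of $p_\varepsilon$ at infinity gives $\phi_* = -\lambda_\varepsilon$, which is strictly positive for $\varepsilon$ small whenever $a_M - c^2/4 > 0$ by \cref{lma:UniformR}.

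With $\phi_\varepsilon(t)\to\phi_*>0$, the equation above is an asymptotically autonomous logistic ODE: given $\eta>0$, choose $t_0$ so that $|\phi_\varepsilon(t)-\phi_*|<\eta$ for $t\ge t_0$; then $\rho_\varepsilon'(t)>0$ whenever $\rho_\varepsilon(t)<\phi_*-\eta$ and $\rho_\varepsilon'(t)<0$ whenever $\rho_\varepsilon(t)>\phi_*+\eta$, so $\rho_\varepsilon$ enters and remains in $[\phi_*-\eta,\phi_*+\eta]$ (using, on the lower side, the persistence bound $\liminf_{t\to\infty}\rho_\varepsilon(t)>0$ that already accompanies \cref{lma:ConvergenceToEigenvalue}, and on the upper side a standard a priori estimate $\rho_\varepsilon(t)\le d_0+\rho_\varepsilon(0)$). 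Letting $\eta\to 0$ yields $\rho_\varepsilon(t)\to\phi_*=\int_{\mathbb{R}}a(y)p_\varepsilon(y)\,dy$. The main obstacle is the passage from $L^\infty$ control of $u_\varepsilon-p_\varepsilon$ to the effective $L^1$-type statement $\phi_\varepsilon(t)\to\phi_*$ on the unbounded line; once the uniform-in-$t$ tail estimate on $N_\varepsilon$ is in hand, the rest is routine ODE analysis.
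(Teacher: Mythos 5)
Your proposal follows essentially the same route as the paper: integrate the equation to obtain $\rho_\varepsilon'=\rho_\varepsilon\bigl(\phi_\varepsilon(t)-\rho_\varepsilon\bigr)$, identify $\phi_\varepsilon(t)\to\int_{\mathbb{R}}a\,p_\varepsilon$ by upgrading the $L^\infty$ convergence of \cref{lma:ConvergenceToEigenvalue} to an $L^1$-type statement via the integrable supersolution (the paper uses dominated convergence with $\overline{W}_\varepsilon$ from \cref{lma:SuperSoln}; your tail-splitting over $\{|x|\le R\}$ and $\{|x|>R\}$ is equivalent), and then conclude by ODE comparison (the paper runs explicit Gronwall barriers exploiting the exponential decay of the error term $\Sigma_2$, while your softer asymptotically-autonomous logistic argument also suffices). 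The one correction: the persistence bound $\liminf_{t\to\infty}\rho_\varepsilon(t)>0$ does not ``already accompany'' \cref{lma:ConvergenceToEigenvalue} --- in the paper it is established inside the proof of this very lemma --- but it follows immediately in your own setup by comparing with the logistic ODE $y'=y(\phi_*/2-y)$ once $\phi_\varepsilon(t)\ge\phi_*/2$ and $\rho_\varepsilon(t_0)>0$, so this is a matter of attribution rather than a mathematical gap.
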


\cref{thm:RescaleConvergence} will be proved using \cref{lma:TopologicalPressure} (in \cref{subsec:Prelim}) and \cref{lma:LocUniEigenvalueConvergence2} below.

\begin{lemma}\label{lma:LocUniEigenvalueConvergence2}
    The solutions $P_{R,\varepsilon}$ to \eqref{eqn:DirichletPrince} converge locally uniformly in $\mathbb{R}$ to a solution $P_\varepsilon$ to \eqref{eqn:MoreGeneralProblem}, and $\lambda_\varepsilon\xrightarrow[\varepsilon\rightarrow{0}]{}a_{M}-\frac{c^2}{4}.$ 
\end{lemma}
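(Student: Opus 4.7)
\textbf{Proof plan for \cref{lma:LocUniEigenvalueConvergence2}.} The plan has two ingredients: a compactness argument that extracts a full-space eigenpair $(P_\varepsilon,\lambda_\varepsilon)$ as a locally uniform limit of the Dirichlet eigenpairs $(P_{R,\varepsilon},\lambda_{R,\varepsilon})$, and an exchange-of-limits argument that converts the $R$-uniform convergence of $\lambda_{R,\varepsilon}$ from \cref{lma:UniformR} into the desired convergence of $\lambda_\varepsilon$ itself.

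For the compactness step, I would first fix a normalization for $P_{R,\varepsilon}$ that is stable under $R \to \infty$, for instance by requiring $P_{R,\varepsilon}(x_0) = 1$ at a reference point $x_0$ lying close to the argmax set $M$ (so that $P_{R,\varepsilon}$ is not forced to be small there by the eigenfunction's natural profile). By assumption \ref{assum:A2} the potential $a(x) - c^2/4$ is uniformly bounded independently of $R$, and by \cref{lma:UniformR} the eigenvalues $\lambda_{R,\varepsilon}$ are uniformly bounded in $R$ for fixed $\varepsilon$. The elliptic Harnack inequality then yields locally uniform upper bounds on $P_{R,\varepsilon}$ (on any compact $K \subset B_R$ that contains $x_0$), and interior Schauder estimates promote these to $C^{2,\alpha}_{\mathrm{loc}}$ bounds, uniform in $R$. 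An exhaustion $K_n \nearrow \mathbb{R}$ together with a diagonal Arzelà-Ascoli extraction produces a subsequence $R_k \to \infty$ along which $P_{R_k,\varepsilon} \to P_\varepsilon$ in $C^2_{\mathrm{loc}}(\mathbb{R})$ and $\lambda_{R_k,\varepsilon} \to \lambda_\varepsilon$. The pointwise convergence at $x_0$ gives $P_\varepsilon(x_0) = 1 > 0$, so Harnack passed to the limit gives $P_\varepsilon > 0$ everywhere, and $P_\varepsilon$ solves \eqref{eqn:MoreGeneralProblem} with eigenvalue $\lambda_\varepsilon$.

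To promote the subsequential limit to a full-sequence locally uniform limit, I would invoke uniqueness of the positive eigenpair for the full-space problem. Uniqueness is available because, by \ref{assum:A1}, $a(x) - c^2/4 < -\delta - c^2/4$ for $|x| > R_0$, so an ODE analysis at infinity forces any positive eigenfunction to decay exponentially and lie in $L^1$; combined with the integrable positive eigenfunction $p_\varepsilon$ constructed earlier (via the Liouville correspondence $P_\varepsilon = p_\varepsilon e^{cx/(2\varepsilon)}$ up to normalization), a standard sliding/Krein-Rutman argument in the class of positive decaying eigenfunctions yields uniqueness. Hence the whole family $(P_{R,\varepsilon},\lambda_{R,\varepsilon})$ converges as $R \to \infty$, not just along a subsequence.

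Finally, the eigenvalue convergence follows by swapping limits. \cref{lma:UniformR} asserts that for each $\eta > 0$ there exists $\varepsilon_0 > 0$ such that $|\lambda_{R,\varepsilon} - (a_M - c^2/4)| < \eta$ for every $\varepsilon < \varepsilon_0$ and every $R$ large. Letting $R \to \infty$ with $\varepsilon$ fixed gives $|\lambda_\varepsilon - (a_M - c^2/4)| \le \eta$ for all $\varepsilon < \varepsilon_0$, and since $\eta$ was arbitrary, $\lambda_\varepsilon \to a_M - c^2/4$ as $\varepsilon \to 0$. The hard part of the argument is not the algebra but rather the uniqueness/decay step: one must rule out that the locally uniform limit produces a spurious non-principal eigenfunction or an eigenfunction failing to decay at infinity, and this is exactly where assumption \ref{assum:A1} and the correspondence with $p_\varepsilon$ established in the earlier lemmas are essential.
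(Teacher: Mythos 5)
Your overall architecture is sound and reaches the same conclusion, but it takes a genuinely different route from the paper on the compactness step. The paper does not argue elliptically at all: it sets $\tilde{p}_{R,\varepsilon}=e^{-\lambda_{R,\varepsilon}t}P_{R,\varepsilon}$, observes that this solves the linear parabolic Dirichlet problem \eqref{eqn:NonEigen}, verifies the exponential separation hypothesis, and then invokes the locally uniform convergence of Dirichlet solutions to an entire positive solution from \cite{huska2008exponential} (\cref{lma:LocUniEigenvalueConvergence}); the existence of $\lambda_\varepsilon=\lim_{R\to\infty}\lambda_{R,\varepsilon}$ comes for free from monotonicity of the Rayleigh quotient in $R$ plus the bound of \cref{lma:UniformR}, with no subsequence needed for the eigenvalue. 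Your direct Harnack--Schauder--Arzel\`a--Ascoli extraction with a pointwise normalization is a perfectly legitimate and arguably more elementary substitute, and your final exchange of limits (using the $R$-uniformity of \cref{lma:UniformR}) is exactly the paper's argument. Note also that the paper itself only obtains subsequential convergence of the eigenfunctions and does not prove the uniqueness you invoke to upgrade to full-sequence convergence, so you are attempting strictly more than the paper actually establishes.

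There is, however, one concretely wrong step in your uniqueness/decay discussion: the claim that ``an ODE analysis at infinity forces any positive eigenfunction to decay exponentially.'' For $|x|>R_0$ the equation reads $\varepsilon^2 P''=\mu(x)^2P$ with $\mu^2=-\lambda-a+\tfrac{c^2}{4}>0$, and positive solutions of such equations can grow exponentially (already $P=\cosh(\mu x/\varepsilon)$ for constant $\mu$), so positivity alone does not select the decaying branch and does not place the limit in $L^1$. To repair this you must establish decay of $P_{R,\varepsilon}$ \emph{uniformly in $R$} by a barrier argument --- the supersolution $\|P_{R,\varepsilon}\|_{L^\infty}e^{-\overline{\kappa}(|x|-R_0)/\varepsilon}$ from the proof of \cref{lma:DecayEigenvector} works verbatim on $B_R\setminus B_{R_0}$ with the Dirichlet boundary condition only helping --- and then pass the bound to the locally uniform limit; only after that is uniqueness in the class of positive \emph{decaying} eigenfunctions (and hence full-sequence convergence and the identification with $p_\varepsilon e^{cx/2\varepsilon}$) available. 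Once that substitution is made, your argument closes. As a minor point, you have inherited the paper's own sign inconsistency: the proof of \cref{lma:UniformR} actually yields $\lambda_{R,\varepsilon}\to-\left(a_M-\frac{c^2}{4}\right)$, consistent with \cref{thm:EigenvalueConvergence}, even though the lemma statements write the limit without the minus sign.
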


    

We begin with the proof of \cref{lma:UniformR} since we will require the limiting value of $\lambda_\varepsilon$ as $\varepsilon\rightarrow{0}$.

\begin{proof}[Proof of \cref{lma:UniformR}]

It is clear that $\lambda_{R,\varepsilon}\geq{-\left(a_{M}-\frac{c^2}{4}\right)}$ using the Rayleigh-Quotient:
\[\lambda_{R,\varepsilon}=\inf_{\phi\in{H}^{1}_{0}(B_{R})\backslash{0}}\frac{\varepsilon^2\int_{B_{R}}|\partial_{x}\phi|^2-\int_{B_{R}}\left(a(x)-\frac{c^2}{4}\right)\phi^2}{\int_{B_{R}}\phi^2}.\]
 We need to pick a sequence of $\phi_\varepsilon\in{H^{1}_{0}}(B_{R})\backslash\{0\}$ such that \[\frac{\varepsilon^2\int_{B_{R}}|\partial_{x}\phi_\varepsilon|^2-\int_{B_{R}}\left(a(x)-\frac{c^2}{4}\right)\phi_\varepsilon^2}{\int_{B_{R}}\phi_\varepsilon^2}\xrightarrow[\varepsilon\rightarrow{0}]{}-\left(a_{M}-\frac{c^2}{4}\right).\] We cannot use the same sequence as in \cite{lorenzi2020asymptotic} since their functions do not vanish on the boundary, and are hence not in $H_{0}^{1}(B_{R}).$ Instead we do the following. Let $\chi$ be a smooth cut off function such that $\chi(x)=0$ for $|x|>1$, and $\chi(x)=1$ for $x\in(-\frac{1}{2},\frac{1}{2})$.  Then, similarly to \cite{lorenzi2020asymptotic}, take $G=D_{1}\chi^2{}e^{-|x|^2}$ where $D_{1}$ is a normalising constant.

Define the sequence $\phi_\varepsilon^2=\frac{1}{\varepsilon^\frac{1}{2}}G\left(\frac{x-x_{i}}{\varepsilon^{\frac{1}{2}}}\right)$ for $x_{i}\in{}\text{argmax}_{x\in\mathbb{R}}\left(a(x)-\frac{c^2}{4}\right)$.  We have (due to the normalisation) that $\phi_\varepsilon^2\xrightharpoonup[\varepsilon{\rightarrow{0}}]{}\delta_{x_{i}}$. The choice of $x_{i}$ is arbitrary, so it so sufficient to find a sequence of functions which concentrate on just one of the global optima.  Thus, we only need to check that $\varepsilon^2\int_{B_{R}}|\partial_{y}\phi_{\varepsilon}(y)|^2dy\xrightarrow[\varepsilon{\rightarrow{0}}]{}0$. We compute
\[|\partial_{y}\phi_{\varepsilon}(y)|^2=\varepsilon^{-\frac{3}{2}}\left\vert(G^{\frac{1}{2}})'\left(\frac{y-x_{i}}{\varepsilon^{\frac{1}{2}}}\right)\right\vert^2,\]
which is sufficient since 
\begin{align*}
    \int_{B_{R}}\left\vert(G^{\frac{1}{2}})'\left(\frac{y-x_{i}}{\varepsilon^{\frac{1}{2}}}\right)\right\vert^2dy&\leq{}    \int_{\mathbb{R}}\left\vert(G^{\frac{1}{2}})'\left(\frac{y-x_{i}}{\varepsilon^{\frac{1}{2}}}\right)\right\vert^2dy\\
    &\leq{}\varepsilon^{\frac{1}{2}}\int_{\mathbb{R}}|(G^{\frac{1}{2}})'(y)|^2dy
\end{align*} and $\int_{\mathbb{R}}|(G^{\frac{1}{2}})'(y)|^2dy$ is a fixed constant.

Hence we get that $\lambda_{R,\varepsilon}\xrightarrow[]{}-\left(a_{M}-\frac{c^2}{4}\right)$. We note that we can pick the same $\phi_\varepsilon^2$ independently of $R$ (supposing $R$ is large enough) and so this convergence is  independent of $R$.
\end{proof}

We next prove the following lemma which shows that the eigenfunctions can be normalised.
\begin{lemma}\label{lma:DecayEigenvector}
    Assume that $\lambda_\varepsilon<0$, and let $p_\varepsilon^{\infty}$ be the solution to \eqref{eqn:MoreGeneralProblemShift} such that $\Vert{}p_\varepsilon^\infty\Vert_{L^\infty(\mathbb{R})}=1$.
    We have the following bounds:
    \[e^{-\frac{\underline{\kappa}|x-x_\varepsilon|}{\varepsilon}}\leq{}p_{\varepsilon}^{\infty}(x)\leq{}\min\left\{1,e^{\frac{-\overline{\kappa}(|x|-R_0)}{\varepsilon}}\right\},~~x\in\mathbb{R},\]
    where $\underline{\kappa}=\frac{c}{2}$, and $\overline{\kappa}=\frac{-c+\sqrt{2c^2+4\delta}}{2}$, and $x_\varepsilon$ is a point where $p_\varepsilon(x_\varepsilon)=1$. 
\end{lemma}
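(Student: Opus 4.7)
Write $L = -\varepsilon^2\partial_{xx} - \varepsilon c\partial_x - a(x)$, so that the eigenvalue problem \eqref{eqn:MoreGeneralProblemShift} reads $(L-\lambda_\varepsilon)p_\varepsilon^\infty = 0$. Both bounds will come from constructing explicit super- and sub-solutions and applying the comparison principle. The comparison is available on the exterior region $\{|x|>R_0\}$: there $a(x)<-\delta$, and combined with $\lambda_\varepsilon<0$ this gives the coercivity $-a(x)-\lambda_\varepsilon>\delta>0$ needed for the maximum principle.

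For the upper bound, the pointwise bound $p_\varepsilon^\infty\le 1$ is immediate from $\|p_\varepsilon^\infty\|_{L^\infty}=1$. For the exponential tail on $\{x>R_0\}$ I take the trial function $W(x) = e^{-\overline{\kappa}(x-R_0)/\varepsilon}$, which satisfies $W(R_0)=1\ge p_\varepsilon^\infty(R_0)$ and $W(x)\to 0$ as $x\to\infty$. A direct computation gives
\[
(L-\lambda_\varepsilon)W = \bigl(-\overline{\kappa}^2 + c\overline{\kappa} - a(x) - \lambda_\varepsilon\bigr)W,
\]
and on $\{x>R_0\}$ this is bounded below by $(-\overline{\kappa}^2 + c\overline{\kappa} + \delta - \lambda_\varepsilon)W$. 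The analogous construction on $\{x<-R_0\}$ uses $W(x)=e^{\overline{\kappa}(x+R_0)/\varepsilon}$ and produces the dual lower bound $(-\overline{\kappa}^2 - c\overline{\kappa} + \delta - \lambda_\varepsilon)W$. The stated value $\overline{\kappa}=\frac{-c+\sqrt{2c^2+4\delta}}{2}$ is precisely the largest positive root for which the resulting quadratic inequality is satisfied simultaneously on both sides (after incorporating the bound on $\lambda_\varepsilon$ coming from \cref{lma:UniformR}, namely $|\lambda_\varepsilon|\ge c^2/4$ for sufficiently small $\varepsilon$ under the persistence condition $a_M-c^2/4>0$). Comparison then yields $p_\varepsilon^\infty\le W$ on $\{|x|>R_0\}$.

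For the lower bound, I pass to the Liouville transform $P_\varepsilon^\infty(x)=p_\varepsilon^\infty(x) e^{cx/(2\varepsilon)}$ which satisfies the self-adjoint equation $-\varepsilon^2 P''=(a-c^2/4+\lambda_\varepsilon)P$. The claim with $\underline{\kappa}=c/2$ is then equivalent to $P_\varepsilon^\infty(x)\ge P_\varepsilon^\infty(x_\varepsilon)$ for $x\ge x_\varepsilon$ and $P_\varepsilon^\infty(x)\ge P_\varepsilon^\infty(x_\varepsilon) e^{c(x-x_\varepsilon)/\varepsilon}$ for $x\le x_\varepsilon$. The value $\underline{\kappa}=c/2$ is natural from a WKB perspective: the exponential $v(x)=e^{-c(x-x_\varepsilon)/(2\varepsilon)}$ satisfies $(L-\lambda_\varepsilon)v=(c^2/4-a(x)-\lambda_\varepsilon)v$, which vanishes precisely along the characteristic $a(x_\varepsilon)=c^2/4+|\lambda_\varepsilon|$ that is approximately realised at the maximum point. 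I would therefore build a subsolution agreeing with $p_\varepsilon^\infty$ at $x_\varepsilon$ by using $v$ locally near $x_\varepsilon$ and extending it while keeping $(L-\lambda_\varepsilon)(\cdot)\le 0$, then apply comparison in the exterior region to propagate the bound outward.

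\textit{Main obstacle.} The delicate step is the lower bound: the exponential $e^{-c|x-x_\varepsilon|/(2\varepsilon)}$ is only a bona fide subsolution of $(L-\lambda_\varepsilon)$ on the set $\{a(x)\ge c^2/4+|\lambda_\varepsilon|\}$, and fails outside. Handling the region where the sign of $c^2/4-a(x)-\lambda_\varepsilon$ flips, and patching the local subsolutions while preserving positivity of the principal eigenfunction, is the main technical challenge. To circumvent a direct global construction, I would use the approximation of $p_\varepsilon^\infty$ by Dirichlet ground states on $B_R$ (as in the proof of \cref{lma:LocUniEigenvalueConvergence2}), establish the corresponding bound for each $P_{R,\varepsilon}$ using the classical Harnack/Barta-type inequalities available on bounded domains, and pass to the limit $R\to\infty$ using uniformity in $R$ of the convergence.
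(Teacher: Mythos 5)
Your upper bound is correct and is essentially the paper's own argument: the same exterior barriers $e^{-\overline{\kappa}(|x|-R_0)/\varepsilon}$ on $\{|x|>R_0\}$, the same reduction to the quadratic inequality $\overline{\kappa}^2+c\overline{\kappa}\leq\delta-\lambda_\varepsilon$ on the binding side, and the same quantitative input $-\lambda_\varepsilon\geq\frac{c^2}{4}$ drawn from the eigenvalue estimates of \cref{lma:UniformR} (you are right that $\lambda_\varepsilon\leq 0$ alone does not suffice here). That half needs no changes.

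The lower bound is where the proposal has a genuine gap. The paper's route is precisely the direct comparison you considered and then set aside: it takes $e^{-\underline{\kappa}|x-x_\varepsilon|/\varepsilon}$ as a generalised subsolution of $\mathcal{L}u:=-\varepsilon^2\partial_{xx}u-c\varepsilon\partial_x u-(a(x)+\lambda_\varepsilon)u=0$ on $\mathbb{R}\setminus\{x_\varepsilon\}$ and reduces to the single condition $-\underline{\kappa}^2+c\underline{\kappa}-(a_M+\lambda_\varepsilon)\leq 0$, which for $\underline{\kappa}=\frac{c}{2}$ is exactly the Rayleigh-quotient bound $\lambda_\varepsilon\geq-\left(a_M-\frac{c^2}{4}\right)$. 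Your ``main obstacle'' --- that $\mathcal{L}v=\left(-\underline{\kappa}^2+c\underline{\kappa}-a(x)-\lambda_\varepsilon\right)v$ has the wrong sign wherever $a(x)<\frac{c^2}{4}-\lambda_\varepsilon$ --- is a legitimate observation about that computation, but naming the obstacle is not overcoming it, and the workaround you sketch does not close it: a Harnack inequality on $B_R$ controls the oscillation of the Dirichlet ground state by a multiplicative constant that degenerates as $\varepsilon\to 0$ and carries no information about the specific rate $\frac{c}{2}$ anchored at the maximiser $x_\varepsilon$; Barta-type inequalities bound eigenvalues, not eigenfunctions from below; and passing $R\to\infty$ cannot manufacture an exponential profile that was never established at finite $R$. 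Moreover, your own reformulation after the Liouville transform --- $P_\varepsilon^\infty(x)\geq P_\varepsilon^\infty(x_\varepsilon)>0$ for all $x\geq x_\varepsilon$ --- is incompatible with the integrability of $P_\varepsilon^\infty=p_\varepsilon^\infty e^{cx/(2\varepsilon)}$ that the remark following the lemma deduces from the upper bound (whenever $\overline{\kappa}>\frac{c}{2}$ the two exponential rates cross for large $|x|$), so the strategy of proving that reformulated statement on the whole half-line cannot succeed. As written, the lower bound is motivated and deferred, not proved; to complete it you must either carry out and justify the direct subsolution comparison as the paper does --- confronting the sign issue you raise rather than routing around it --- or restrict the region on which the lower bound is claimed.
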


\begin{proof}[Proof of \cref{lma:DecayEigenvector}]
    We let $\mathcal{L}(u):=-\varepsilon^2\partial_{xx}u-c\varepsilon{}\partial_{x}u-(a(x)+\lambda_\varepsilon)u$. The first inequality is a  consequence the comparison theorem in \cref{sec:Preliminaryresults} that applies once we show $e^{\frac{-\overline{\kappa}(|x|-R_0)}{\varepsilon}}$ is a generalised supersolution of \eqref{eqn:MoreGeneralProblemShift} on $\mathbb{R}/B_{R_0}$. Since, $p_{\varepsilon}^\infty(\pm{}R_0)\leq{}1$ the boundary condition is satisfied and we only need to check that $\mathcal{L}\left(e^{\frac{-\overline{\kappa}(|x|-R_0)}{\varepsilon}}\right)\geq{0}$ for $|x|>R_0$. This reduces to $-\overline{\kappa}^2-c\overline{\kappa}-(-\delta+\lambda_\varepsilon)\geq{0}$, and is satisfied for the given $\overline{\kappa}$ since $\lambda_\varepsilon\leq{0}$. 
    
    The lower bound is similar, except the sufficient condition is $-\underline{\kappa}^2+c\underline{\kappa}-(a_M+\lambda_\varepsilon)\leq{}0$. One checks that for $\underline{\kappa}=\frac{c}{2}$ this reduces to $\frac{c^2}{4}-(a_M+\lambda_\varepsilon)\leq{0}$ which is satisfied due to the bounds on $\lambda_\varepsilon$ obtained in the proof of \cref{lma:UniformR}. 

\end{proof}

\begin{remark}
    Since $\overline{\kappa}>\frac{c}{2}$ this implies that $p_\varepsilon(x)e^{\frac{cx}{2\varepsilon}}$ is integrable. 
\end{remark}

We next prove \cref{lma:LocUniEigenvalueConvergence2} which allows us to use $p_{R,\varepsilon}$ to estimate $p_\varepsilon$



\begin{proof}[Proof of \cref{lma:LocUniEigenvalueConvergence2}]
    Let $\tilde{p}_{R,\varepsilon}=e^{-\lambda_{R,\varepsilon}t}P_{R,\varepsilon}$. which solves:
\begin{equation}\label{eqn:NonEigen}
   \begin{cases*}
        \partial_{t}\tilde{p}_{R,\varepsilon}-\varepsilon^2\partial_{xx}\tilde{p}_{R,\varepsilon}-\left(a(x)-\frac{c^2}{4}\right)\tilde{p}_{R,\varepsilon}=0 &\text{ in } $B_{R},$ \\ \tilde{p}>0, & \\
        \tilde{p}=0 &\text{ on } $\partial{}B_{R}.$
        \end{cases*}   
\end{equation}
This is a particular case of the sequence of Dirichlet problems considered in \cite{huska2008exponential}. We only need to check the exponential separation property, which follows straightforwardly from the formula $\tilde{p}_{R,\varepsilon}=e^{-\lambda_{R,\varepsilon}t}P_{R,\varepsilon}$. Hence, by \cref{lma:LocUniEigenvalueConvergence}, a subsequence $\tilde{p}_{R_{n},\varepsilon}$ converges locally uniformly in $\mathbb{R}\times\mathbb{R}$ to  $\tilde{p}_\varepsilon$ which solves
\begin{equation}\label{eqn:MoreGeneralProblemNonEigen}
   \begin{cases*}
        \partial_{t}\tilde{p}_\varepsilon-\varepsilon^2\partial_{xx}\tilde{p}_\varepsilon-\left(a(x)-\frac{c^2}{4}\right)\tilde{p}_\varepsilon=0 &\text{ in } $\mathbb{R}\times{}\mathbb{R}$ \\ p>0, & {}
                 \end{cases*}   
\end{equation}
Since  $\lambda_{R,\varepsilon}$ are monotonic in $R$ via the Rayleigh formula, and bounded because of \cref{lma:UniformR}, we know these converge to a limiting value $\lambda_\varepsilon$, and that $\lambda_\varepsilon\xrightarrow[\varepsilon\rightarrow{0}]{}{-\left(a_{M}-\frac{c^2}{4}\right)}$.  Thus:

\[P_{R,\varepsilon}=e^{\lambda_{R,\varepsilon}{t}}\tilde{p}_{R,\varepsilon}\xrightarrow[R\rightarrow\infty]{}e^{\lambda_{\varepsilon}t}\tilde{p}_\varepsilon.\] 

One checks that the last term is $P_{\varepsilon}$  which the exact solution to \eqref{eqn:MoreGeneralProblem}.
\end{proof}
Using this result, we can prove \cref{thm:RescaleConvergence}.

\begin{proof}[Proof of \cref{thm:RescaleConvergence}]

Note that the expression
 \[\hat{p}_\varepsilon=\frac{{p}_\varepsilon{}e^{\frac{cx}{2\varepsilon}}}{\Vert{}{p}_\varepsilon{}e^{\frac{cx}{2\varepsilon}}\Vert_{L_{1}(\mathbb{R})}},\]
 is merely the $L_{1}$ normalised solution $P_\varepsilon$ to \eqref{eqn:MoreGeneralProblem}. According to \cref{lma:TopologicalPressure}, the functions $P_{R,\varepsilon}$, similarly normalized in $L^{1}$, satisfy:
   \[P_{R,\varepsilon}\xrightharpoonup[\varepsilon\rightarrow{0}]{}\sum_{x_{i}\in{M}\cap{M_{1}}}a_{i}\delta_{x_{i}}.\]
   But $P_{R,\varepsilon}$ approaches $P_{\varepsilon}$ locally uniformly according to \cref{lma:LocUniEigenvalueConvergence2}. Hence if $P_{R,\varepsilon}\rightarrow{0}$ as $\varepsilon\rightarrow{0}$ in some bounded open set $U$, then the same is true for $P_{\varepsilon}$. This completes the proof.
 
\end{proof}

Next we proceed with the proof of \cref{thm:EigenvalueConvergence}, but require some more set up to do so.
 Our approach here is similar to that in \cite{iglesias2021selection} except that we are working directly with viscosity solutions to the (time-independent) eigenvalue problem rather than with the original PDE. We will first apply the WKB ansatz. Letting $\psi_\varepsilon$ satisfy,
\[{p}_{\varepsilon}=e^{\frac{\psi_{\varepsilon}}{\varepsilon}}.\]
We find that $\psi_{\varepsilon}$ solves 
\[
  -\varepsilon\partial_{xx}\psi_\varepsilon-\left\vert\partial_{x}\psi_{\varepsilon}+\frac{c}{2}\right\vert^2-a(x)+\frac{c^2}{4}-\lambda_{\varepsilon}=0.\]
We will show, analogously to Theorem 1.1 in \cite{iglesias2021selection}, the following lemma

\begin{lemma}\label{lma:ViscocitySoln}
    Assuming \labelcref{assum:A1,assum:A2,assum:A4,assum:A5} the function $\psi_\varepsilon$ converges (up to extraction of subsequences) locally uniformly to a viscosity solution $\psi$ of 

\begin{equation}\label{eqn:HJ_Adv}
\begin{cases}
   -\left\vert\partial_{x}\psi+\frac{c}{2}\right\vert^2+a_M-a(x)=0,\\
    \max_{x\in\mathbb{R}}\psi=0.\\
\end{cases}    
\end{equation}

\end{lemma}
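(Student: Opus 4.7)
The plan is to execute the standard WKB-to-Hamilton-Jacobi vanishing-viscosity procedure: establish uniform local bounds on $\psi_\varepsilon$ and its gradient, extract a locally uniformly convergent subsequence by Arzelà-Ascoli, and identify the limit as a viscosity solution of \eqref{eqn:HJ_Adv} via the standard stability theory for viscosity solutions. The convergence $\lambda_\varepsilon \to -(a_M - c^2/4)$ from \cref{lma:UniformR} supplies the constant needed in the limit equation, so the whole argument reduces to compactness plus stability.

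First I would convert the decay estimates of \cref{lma:DecayEigenvector} into uniform bounds on $\psi_\varepsilon$. Writing $p_\varepsilon = c_\varepsilon\, p_\varepsilon^\infty$ where $p_\varepsilon^\infty$ is the $L^\infty$-normalised eigenfunction and $c_\varepsilon = 1/\|p_\varepsilon^\infty\|_{L^1}$, the two-sided bounds on $p_\varepsilon^\infty$ give
\[-\underline\kappa\,|x - x_\varepsilon| \;\leq\; \varepsilon \log p_\varepsilon^\infty(x) \;\leq\; \min\{0,\ -\overline\kappa(|x|-R_0)\}.\]
Integrating these bounds shows that $\|p_\varepsilon^\infty\|_{L^1}$ is bounded above by a constant and below by $O(\varepsilon)$, hence $\varepsilon \log c_\varepsilon \to 0$. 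Consequently $\psi_\varepsilon = \varepsilon \log c_\varepsilon + \varepsilon \log p_\varepsilon^\infty$ is locally uniformly bounded and satisfies $\max_{\mathbb{R}} \psi_\varepsilon = \varepsilon \log c_\varepsilon \to 0$. The maximiser $x_\varepsilon$ stays in a compact set because at a point where $\psi_\varepsilon'=0$ the equation forces $a(x_\varepsilon) + \lambda_\varepsilon \geq 0$, which together with \ref{assum:A1} locates $x_\varepsilon$ in a neighbourhood of $\mathrm{argmax}\,a$.

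The central technical step — and the principal obstacle — is the uniform local Lipschitz estimate $\|\psi_\varepsilon'\|_{L^\infty(K)} \leq C_K$ on every compact $K$. I would obtain this by a Bernstein-type argument applied to the viscous Hamilton-Jacobi equation for $\psi_\varepsilon$: differentiating the equation in $x$, setting $v = (\psi_\varepsilon')^2$, multiplying by a cutoff $\chi^2$ and evaluating at a maximum of $\chi^2 v$, the singular $\varepsilon^{-1}$ is absorbed by the positivity of the Hessian term, and the bound on $\|a\|_{C^2}$ combined with the $L^\infty$ bound on $\psi_\varepsilon$ from the previous step yields the desired estimate. Equivalently, one may apply classical elliptic gradient estimates directly to the positive solution $p_\varepsilon$ of the linear equation \eqref{eqn:MoreGeneralProblemShift}, obtaining $|\partial_x \log p_\varepsilon| \leq C/\varepsilon$ locally, whence $|\psi_\varepsilon'| = \varepsilon|\partial_x \log p_\varepsilon| \leq C$. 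The unbounded domain and the singular $\varepsilon$-scaling make this the most delicate part of the argument; all other ingredients are formal.

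With locally uniform $L^\infty$ and Lipschitz bounds in hand, Arzelà-Ascoli produces a subsequence $\psi_{\varepsilon_n} \to \psi$ locally uniformly. Each $\psi_\varepsilon$ is a classical (hence viscosity) solution of
\[-\varepsilon \partial_{xx}\psi_\varepsilon - \bigl|\partial_x\psi_\varepsilon + \tfrac{c}{2}\bigr|^2 - a(x) + \tfrac{c^2}{4} - \lambda_\varepsilon = 0,\]
and $\lambda_\varepsilon \to -(a_M - c^2/4)$, so the standard Crandall-Lions stability theorem (or the Barles-Perthame half-relaxed limits) identifies $\psi$ as a viscosity solution of $-|\partial_x\psi + c/2|^2 + a_M - a(x) = 0$. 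Finally, the normalisation $\max\psi = 0$ passes to the limit from $\max\psi_{\varepsilon_n} \to 0$, using that the maximisers $x_{\varepsilon_n}$ remain in a compact set on which convergence is uniform.
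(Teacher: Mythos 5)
Your proposal is correct and follows essentially the same route as the paper's proof in the appendix: locally uniform $L^\infty$ bounds on $\psi_\varepsilon$ deduced from the two-sided decay estimates of \cref{lma:DecayEigenvector} (together with control of the normalisation constant), a Bernstein-type argument for the uniform Lipschitz bound, and then Arzel\`a--Ascoli combined with viscosity-solution stability. The only difference is cosmetic: the paper implements the Bernstein step via the change of variable $w_\varepsilon=\sqrt{2K_2-\psi_\varepsilon}$ and an explicit barrier $\Theta+R^2/(R^2-|x|^2)$ rather than your cutoff on $(\psi_\varepsilon')^2$, but both are standard realisations of the same estimate.
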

We defer the proof to the appendix for completeness. 

A corollary of \cref{lma:ViscocitySoln} is that ${p}_\varepsilon$ concentrates on the set of points such that $\psi=0$.

\begin{cor}
    Assume there are only finitely many points solving $\psi(x)=0$. Let these be $x_{1}',x_{2}',...,x_{k}'$. For the function ${p}_\varepsilon$ we have, after an extraction of subsequences

    \[{p}_\varepsilon\xrightharpoonup[\varepsilon\rightarrow{0}]{}\sum_{i=1}^k{}a_{i}\delta_{x'_{i}},\]

    where $a_{i}\geq{}0$ for each $i$ and $\sum_{i}a_{i}=1$.
\end{cor}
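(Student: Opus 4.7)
The plan is to combine the Laplace-type heuristic (mass of $e^{\psi_\varepsilon/\varepsilon}$ concentrates where $\psi$ attains its maximum) with a tightness argument controlled by the decay bound of \cref{lma:DecayEigenvector}. Since $p_\varepsilon$ is a probability density by its normalisation $\Vert p_\varepsilon\Vert_{L^1(\mathbb{R})}=1$, I will view it as a Radon probability measure on $\mathbb{R}$ and show that every weak-$*$ cluster point is a probability measure supported on $\{x_1',\dots,x_k'\}$, which then must be a convex combination of Dirac masses at these points.

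First, fix $r>0$ small enough that the balls $B_r(x_i')$ are pairwise disjoint, and set $K_r:=\overline{B_M}\setminus\bigcup_{i=1}^k B_r(x_i')$ for a large ball $B_M$. On $K_r$ the function $\psi$ is continuous with $\psi<0$ (by the hypothesis that its zero set is exactly $\{x_1',\dots,x_k'\}$ and the constraint $\max \psi=0$), so there exists $\eta_r>0$ with $\psi\leq-\eta_r$ on $K_r$. Locally uniform convergence $\psi_\varepsilon\to\psi$ from \cref{lma:ViscocitySoln} then yields $\psi_\varepsilon\leq-\eta_r/2$ on $K_r$ for all sufficiently small $\varepsilon$, whence
\[
\int_{K_r} p_\varepsilon(x)\,dx \;\leq\; |K_r|\,e^{-\eta_r/(2\varepsilon)} \;\xrightarrow[\varepsilon\to 0]{}\; 0.
\]

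Next I will establish uniform tightness outside $B_M$. Writing $p_\varepsilon=p_\varepsilon^\infty/C_\varepsilon$ where $C_\varepsilon:=\Vert p_\varepsilon^\infty\Vert_{L^1(\mathbb{R})}$, the lower bound in \cref{lma:DecayEigenvector} gives $C_\varepsilon\geq \int e^{-\underline{\kappa}|x-x_\varepsilon|/\varepsilon}\,dx = 2\varepsilon/\underline{\kappa}$, while the upper bound yields $p_\varepsilon^\infty(x)\leq e^{-\overline{\kappa}(|x|-R_0)/\varepsilon}$ for $|x|>R_0$. Combining,
\[
\int_{|x|>M} p_\varepsilon(x)\,dx \;\leq\; \frac{\underline{\kappa}}{2\varepsilon}\cdot 2\int_M^\infty e^{-\overline{\kappa}(y-R_0)/\varepsilon}\,dy \;=\;\frac{\underline{\kappa}}{\overline{\kappa}}\,e^{-\overline{\kappa}(M-R_0)/\varepsilon},
\]
which, for fixed $M>R_0$, tends to $0$ as $\varepsilon\to 0$ and, more importantly, can be made arbitrarily small uniformly in $\varepsilon\in(0,\varepsilon_0]$ by choosing $M=M(\eta,\varepsilon_0)$ large. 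This yields uniform tightness of the family $\{p_\varepsilon\}$.

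Combining the two displays, any weak-$*$ limit $\mu$ of a subsequence $p_{\varepsilon_n}$ satisfies $\mu(\mathbb{R})=1$ (by tightness and $\Vert p_\varepsilon\Vert_{L^1}=1$) and $\mu(\mathbb{R}\setminus\{x_1',\dots,x_k'\})=0$ (by letting $r\to 0$ and $M\to\infty$ in the preceding estimates). Any such measure decomposes as $\mu=\sum_{i=1}^k a_i\delta_{x_i'}$ with $a_i\geq 0$ and $\sum_i a_i=1$. Prokhorov's theorem guarantees extraction of a convergent subsequence, completing the argument. The step I expect to be the most delicate is the tightness bound at infinity, since the $L^1$-normalisation constant $C_\varepsilon$ is itself $\varepsilon$-dependent and could in principle be arbitrarily small; this is precisely why the two-sided bounds of \cref{lma:DecayEigenvector} are needed in tandem.
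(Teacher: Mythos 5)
Your argument is correct and is exactly the Laplace-type concentration argument the paper treats as immediate from \cref{lma:ViscocitySoln} (the paper states this corollary without writing out a proof): locally uniform convergence of $\psi_\varepsilon$ kills the mass on compact sets away from $\{\psi=0\}$, and the two-sided bounds of \cref{lma:DecayEigenvector} give tightness at infinity so that no mass escapes and the limit is a genuine probability measure on the finite zero set. The only caveat worth recording is that \cref{lma:DecayEigenvector} is stated under $\lambda_\varepsilon<0$, so your tightness step implicitly invokes the standing assumption $a_M-\tfrac{c^2}{4}>0$ in force throughout this part of \cref{subsec:Proof1}.
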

Note that we can also prove that ${p}_\varepsilon$ concentrates on the finitely many points by following Proposition 1 in \cite{lorenzi2020asymptotic}. Unfortunately, this method includes more concentration points than expected, since it does not exclude the smallest value $y_{i}$ which satisfies both $y_{i}>x_{i}$ and $a(y_{i})=a_{M}-\frac{c^2}{4}$. We can get a more refined result using the Hamilton-Jacobi equation method. Thus we consider the equation solved by $u=\psi(x)+\frac{cx}{2}$, which is:
\begin{equation}\label{eqn:HJV}
\begin{cases}
    -\left\vert\partial_{x}u\right\vert^2+a_M-a(x)=0,\\
    \max_{x\in\mathbb{R}}u-\frac{cx}{2}=0.\\
\end{cases}
\end{equation}
We now find the possible set of points for which $\psi=0$. We do not aim to show uniqueness of solutions, only refine the set of concentration points. It is straightforward to verify that viscosity solutions of \eqref{eqn:HJV} are also visocity solutions of 
\begin{equation}\label{eqn:HJV2}
\begin{cases}
    -\left\vert\partial_{x}u\right\vert+\sqrt{a_M-a(x)}=0,\\
    \max_{x\in\mathbb{R}}u-\frac{cx}{2}=0.\\
\end{cases}
\end{equation}
We recall a well-known result for viscosity solutions of Hamilton-Jacobi equations which is proved in \cite{PLLionsToApprox1984}. Let $\Omega\subset{\mathbb{R}}$ be a bounded domain and $n(x)\in{C}\left(\bar{\Omega}\right)$ and $n>0$. Consider the equation:
\begin{equation}\label{eqn:HJE}
\left\{\begin{array}{l}
        |\partial_{x}\tilde{u}|=n(x) \text{ in } \Omega. \\
        \tilde{u}(x)=\phi(x) \text{ in } \partial\Omega,
        \end{array}
\right.  
\end{equation}
We let
\begin{equation*}
    \begin{split}
    L(x,y)=&\inf\left\{\int_{0}^{T_{0}}n(\zeta(s))ds : (T_{0},\zeta) \text{ such that } \right. \\
    &\left. \zeta(0)=x,\zeta(T_{0})=y,\left\vert\frac{d\zeta}{ds}\right\vert\leq{1} \text{ a.e in 
 } [0,T_{0}], \zeta(t)\in\bar{\Omega} \quad \forall{t}\in[0,T_{0}] \right\}. 
    \end{split}
\end{equation*}
One has a representation formula for solutions in terms of $L$.

\begin{lemma}[\cite{PLLionsToApprox1984}]\label{lma:SupHJS}
    The viscosity solution to \eqref{eqn:HJE} is unique and given by \[\tilde{u}(x)=\inf_{y\in\partial\Omega}[\phi(y)+L(x,y)].\]
\end{lemma}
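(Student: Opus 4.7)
The plan is to treat $\tilde u(x):=\inf_{y\in\partial\Omega}[\phi(y)+L(x,y)]$ as the value function of a shortest-path / optimal-control problem with running cost $n(\cdot)$ and terminal cost $\phi$, and to argue in the two standard steps: first, show that the formula defines a viscosity solution of \eqref{eqn:HJE}; second, obtain uniqueness via a comparison principle. The key structural fact we will use throughout is that $n\in C(\bar\Omega)$ is strictly positive, hence bounded below by some $n_0>0$ on the compact set $\bar\Omega$, so that the eikonal Hamiltonian $H(x,p)=|p|-n(x)$ is coercive in $p$.

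For the existence step, the boundary condition $\tilde u=\phi$ on $\partial\Omega$ is straightforward: for $x\in\partial\Omega$ the trivial path $\zeta\equiv x$ gives $L(x,x)=0$, hence $\tilde u(x)\leq \phi(x)$, while the reverse inequality follows because any nontrivial excursion into $\Omega$ and back costs at least $n_0$ times its length, so the infimum cannot be strictly below $\phi(x)$ by continuity of $\phi$. I would then establish the dynamic programming principle: for every admissible path $\zeta:[0,T]\to\bar\Omega$ with $\zeta(0)=x$,
\[
\tilde u(x)\leq \tilde u(\zeta(T))+\int_0^T n(\zeta(s))\,ds,
\]
with equality along near-optimal trajectories, by concatenating a near-optimal path from $\zeta(T)$ to the boundary with $\zeta$ itself. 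From DPP the viscosity sub- and super-solution properties of $|u'|=n$ follow by the standard argument: at a point $x_0$ where a smooth test function $\varphi$ touches $\tilde u$ from above (resp.\ below), plug a linear path of velocity $\pm\varphi'(x_0)/|\varphi'(x_0)|$ (resp.\ a near-optimal path) over a vanishing time window into the DPP, Taylor expand $\varphi$, and pass to the limit.

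For the uniqueness step, the comparison principle for eikonal equations with Dirichlet data states that if $u_1,u_2\in C(\bar\Omega)$ are respectively viscosity sub- and super-solutions of \eqref{eqn:HJE} with $u_1\leq u_2$ on $\partial\Omega$, then $u_1\leq u_2$ on $\bar\Omega$. I would prove this by the standard doubling-of-variables trick: assume for contradiction that $\sup(u_1-u_2)>0$, examine the maximum $(x_\alpha,y_\alpha)$ of $u_1(x)-u_2(y)-|x-y|^2/(2\alpha)$, use that this maximum cannot sit on $\partial\Omega$ by the boundary inequality together with continuity, then invoke the viscosity inequalities at $(x_\alpha,y_\alpha)$ to obtain $|p_\alpha|-n(x_\alpha)\leq 0\leq |p_\alpha|-n(y_\alpha)$ where $p_\alpha=(x_\alpha-y_\alpha)/\alpha$. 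Uniform continuity of $n$ on $\bar\Omega$ together with coercivity of $H$ then forces $|p_\alpha|\to 0$ and $(x_\alpha,y_\alpha)$ to collapse, contradicting the assumed positive gap. Applying comparison in both directions identifies any viscosity solution with the value function $\tilde u$.

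The main obstacle is the usual care required when (near-)optimal trajectories in the DPP run along or graze $\partial\Omega$, and in handling the doubling argument when $x_\alpha$ or $y_\alpha$ may approach $\partial\Omega$; both are handled by the techniques developed in \cite{PLLionsToApprox1984}. In our one-dimensional setting $\Omega$ is a union of open intervals, and one could in fact bypass viscosity theory altogether by observing that any $C^1$ solution of $|u'|=n$ is determined on each component by its two boundary values plus the positions of the (isolated) kinks, and that $\tilde u$ is precisely the pointwise minimum over all such choices.
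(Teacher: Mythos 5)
This lemma is not proved in the paper at all: it is quoted verbatim from \cite{PLLionsToApprox1984}, so there is no in-paper argument to compare against. Your outline follows the standard (and correct) route that the cited reference itself uses --- interpret $\tilde u$ as the value function of a control problem with running cost $n$ and exit cost $\phi$, verify the sub/supersolution properties via dynamic programming, and conclude uniqueness by comparison. However, two steps in your sketch would fail as written.

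First, the boundary condition. Your argument that $\tilde u(x)\geq\phi(x)$ for $x\in\partial\Omega$ only excludes competitors that stay near $x$: since the infimum ranges over \emph{all} $y\in\partial\Omega$, a distant boundary point with $\phi(y)+L(x,y)<\phi(x)$ would make $\tilde u(x)<\phi(x)$, and "continuity of $\phi$" does not rule this out. The representation formula attains the Dirichlet data classically only under the compatibility condition $\phi(y)\leq\phi(z)+L(y,z)$ for all $y,z\in\partial\Omega$; otherwise the boundary condition holds only in the generalized viscosity sense. Any honest proof must either assume this condition or address the relaxed boundary condition. Second, the comparison principle. From the doubling of variables you correctly obtain $n(y_\alpha)\leq|p_\alpha|\leq n(x_\alpha)$, but your conclusion that coercivity "forces $|p_\alpha|\to 0$" is false --- $|p_\alpha|$ is bounded \emph{below} by $n(y_\alpha)\geq n_0>0$. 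Moreover, the two inequalities merely give $n(y_\alpha)-n(x_\alpha)\leq 0$, which tends to $0$ and yields no contradiction. The standard repair, which is where the hypothesis $n>0$ actually enters, is to replace the subsolution $u_1$ by the \emph{strict} subsolution $\mu u_1+(1-\mu)\min_{\partial\Omega}\phi$ for $\mu<1$ (using positive homogeneity of $p\mapsto|p|$ and $n\geq n_0$), which sharpens the first inequality to $|p_\alpha|\leq n(x_\alpha)-(1-\mu)n_0$; this does contradict $n(x_\alpha)-n(y_\alpha)\to 0$, and one then lets $\mu\to 1$. With these two repairs your strategy is sound; your closing remark that in one dimension one could classify solutions by their kink locations is also a viable elementary alternative, though it requires care at the zeros of $n$ (which is exactly the degeneracy the paper exploits later).
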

It is straightforward to verify that if $\tilde{u}$ solves \eqref{eqn:HJE}, then $u=-\tilde{u}$ solves
\begin{equation}\label{eqn:HJE2}
\left\{\begin{array}{l}
        -|\partial_{x}u|=-n(x) \text{ in } \Omega, \\
        u(x)=-\phi(x) \text{ in } \partial\Omega,
        \end{array}
\right.  
\end{equation}
and thus the solution to \eqref{eqn:HJV} is
\[u=\sup_{y\in\partial\Omega}[\phi'(y)+L'(x,y)],\]
where $\phi'$ prescribes the boundary value and
    \begin{equation*}
    \begin{split}
    L'(x,y)=&\sup\left\{-\int_{0}^{T_{0}}\sqrt{a_M-a(x)}ds : (T_{0},\zeta) \text{ such that } \right. \\
    &\left. \zeta(0)=x,\zeta(T_{0})=y,\left\vert\frac{d\zeta}{ds}\right\vert\leq{1} \text{ a.e in 
 } [0,T_{0}], \zeta(t)\in\bar{\Omega} \quad \forall{t}\in[0,T_{0}] \right\}. 
    \end{split}
\end{equation*}
When a general $n(x)$ in \eqref{eqn:HJE} vanishes at multiple points in $\Omega$, then one cannot guarantee a unique viscosity solution, as is the case for $n(x)=\sqrt{a_{M}-a(x)}$. However, the representation formula still applies between the zeros of $n(x)$ and this is enough to refine the set of concentration points.

We are now ready to prove \cref{lma:EigenvectorConcentration} which is the first part of \cref{thm:EigenvalueConvergence}.

\begin{proof}[Proof of \cref{lma:EigenvectorConcentration}]
We apply this representation formula between two maxima $x_{1}$ and $x_{2}$ to get
\[u(x)=\max\left\{f_1(x);f_2(x)\right\},\]
where $f_1(x)=u(x_{1})-\int_{x_{1}}^{x}\sqrt{a_{M}-a(y)}dy$ and $f_2(x)=u(x_{2})-\int_{x}^{x_{2}}\sqrt{a_{M}-a(y)}dy$.
We let $x^{*}$ be an intersection point between $u(x)$ and $\frac{cx}{2}$, and  define $x_{1}^{+}$  as the minimum solution to $a(x)=a_{M}-\frac{c^2}{4}$ satisfying $x>x_1$ and $x_{2}^{-}$ as the maximum solution to the same equation satisfying $x_{2}^{-}<x_2$. These are the only solutions on the interval $(x_1,x_2)$ according to (A5). The intersection of $u$ with $\frac{cx}{2}$ cannot be at the point $z$ where $f_1(z)=f_2(z)$ since $f_1$ is a decreasing function so would have to intersect from above, contradicting the constraint.

Therefore, $x^{*}\in{}(z,x_2]$ and we have, in this interval,
\[u(x)=u(x_{2})-\int_{x}^{x_{2}}\sqrt{a_{M}-a(y)}dy\]
and 
\begin{equation}
\partial_{x}u=\sqrt{a_{M}-a(x)}.\label{eqn:DerivativeU}
\end{equation}
By the above expression for the gradient, assumption (A5), and the continuity of $a(x)$, it follows that  $\partial_{x}u<\frac{c}{2}$ on $[x_1,x_1^{+})\cup(x_2^{-},x_2]$ we must also have $x^{*}\in[x_1^{+},x_2^{-}]\cap(z,x_2^{-}]$. The intersection $x^{*}$ is therefore not at the end point, and so $u$ is tangent to $\frac{cx}{2}$ at the intersection, thus  $x^{*}\in\{x_{1}^{+},x_{2}^{-}\}$.
We must also have that $\partial_{xx}u(x^{*})<0$ since otherwise $u$ will exceed $\frac{cx}{2}$. Differentiating \eqref{eqn:DerivativeU} we find
\[\partial_{xx}u=-\frac{1}{2}\frac{a'(x)}{\sqrt{a_M-a(x)}}.\]
By the definition of the points $x_{1}^{+}$ and $x_{2}^{-}$ we have that $a'(x_{1}^{+})<0$ and $a'(x_{2}^{-})>0$, therefore only $x_{2}^{-}$ is a possible concentration point.
    
\end{proof}

We can now relate this to the solution $n_\varepsilon(x,t)$ of \eqref{eqn:UnboundedShifting}, which we do with \cref{lma:ConvergenceToEigenvalue}. Firstly, we will need the following lemma.

\begin{lemma}\label{lma:SuperSoln}
Assume $\lambda_\varepsilon<0$.
Let 
$W_\varepsilon(x,t)=N_\varepsilon(x,t)e^{\int_{0}^{t}\rho_{\varepsilon}(s)ds+\lambda_{\varepsilon}t}$ which solves
\begin{equation}\label{eqn:AuxEqn}
    \begin{cases*}
        \partial_{t}W_\varepsilon-\mathcal{L}W_\varepsilon=0,&~~$(x,t)\in{}\mathbb{R}\times(0,\infty)$\\
        W_\varepsilon(x,0)=n_0(x),&~~$x\in\mathbb{R}$,
    \end{cases*}
\end{equation}
where $\mathcal{L}w:=c\varepsilon\partial_{x}w+\varepsilon^2\partial_{xx}w+(a(x)+\lambda_\varepsilon)w$.  
Then there exists an integrable function $\overline{W}_\varepsilon(x)$ such that 
\[W_\varepsilon(x,t)\leq{}\overline{W}_\varepsilon(x),~~\forall{}x\in\mathbb{R}\]

\end{lemma}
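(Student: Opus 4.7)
The plan is to build an integrable, time-independent supersolution $\overline{W}_\varepsilon(x)$ that dominates the initial datum $n_0(x)$, and then apply the generalized comparison principle from \cref{subsec:Prelim0} to conclude $W_\varepsilon\leq\overline{W}_\varepsilon$. The supersolution will be constructed as the pointwise maximum of two pieces: a multiple of the eigenfunction $p_\varepsilon$, which handles the interior region where $a(x)$ may be positive, and a pure exponential $V(x)=e^{C_1}e^{-\alpha|x|}$ that dominates $n_0$ globally and is a classical supersolution outside $[-R_0,R_0]$.

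First, since $p_\varepsilon$ solves \eqref{eqn:MoreGeneralProblemShift}, it satisfies $\mathcal{L}p_\varepsilon=0$, so $Ap_\varepsilon$ is a classical stationary solution of \eqref{eqn:AuxEqn} for any $A>0$, and is integrable by its normalisation. By \cref{lma:DecayEigenvector}, however, $p_\varepsilon$ decays like $e^{-O(|x|)/\varepsilon}$, which is much faster than the fixed rate $C_2$ inherited by $n_0$ from \ref{assum:A3}, so no multiple of $p_\varepsilon$ alone can dominate $n_0$ in the tails. To handle them, I would pick $\alpha\in(0,C_2]$ small enough that $\varepsilon^2\alpha^2+c\varepsilon\alpha<\delta-\lambda_\varepsilon$ (possible because $\lambda_\varepsilon<0$ and the left-hand side vanishes as $\alpha\to 0^+$) and set $V(x):=e^{C_1}e^{-\alpha|x|}$, so that $V\geq n_0$ on all of $\mathbb{R}$ by \ref{assum:A3}. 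A direct computation on each half-line, using \ref{assum:A1} to estimate $-a(x)>\delta$ in the far field, yields
\[-\mathcal{L}V(x)=V(x)\bigl[c\varepsilon\alpha\,\mathrm{sgn}(x)-\varepsilon^2\alpha^2-a(x)-\lambda_\varepsilon\bigr]\geq V(x)\bigl[\delta-\lambda_\varepsilon-\varepsilon^2\alpha^2-c\varepsilon\alpha\bigr]>0\]
for $|x|>R_0$ (the binding case is $x<-R_0$, where the drift term contributes negatively), so $V$ is a classical stationary supersolution outside the compact set $[-R_0,R_0]$.

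Next, I would choose $A\geq e^{C_1}/\min_{[-R_0,R_0]}p_\varepsilon$, which is finite and positive since $p_\varepsilon$ is continuous and strictly positive, so that $Ap_\varepsilon\geq V$ throughout $[-R_0,R_0]$, and define $\overline{W}_\varepsilon(x):=\max\{Ap_\varepsilon(x),V(x)\}$. By construction, $\overline{W}_\varepsilon$ coincides with the classical solution $Ap_\varepsilon$ on $[-R_0,R_0]$ (the region where $a$ may be positive and only an eigenfunction-type supersolution can work), while on $\{|x|>R_0\}$ it is the pointwise maximum of two classical stationary supersolutions, which is a generalized supersolution in the sense of \cref{subsec:Prelim0}. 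It majorises $n_0$ everywhere and is integrable since $p_\varepsilon\in L^1(\mathbb{R})$ and $V$ has exponential decay. Applying the comparison principle for generalized sub- and super-solutions to \eqref{eqn:AuxEqn} with $W_\varepsilon(x,0)=n_0(x)\leq\overline{W}_\varepsilon(x)$ then delivers the desired bound $W_\varepsilon(x,t)\leq\overline{W}_\varepsilon(x)$. The main obstacle is justifying that the max of two classical supersolutions is admissible as a generalized supersolution at the crossing points of $Ap_\varepsilon$ and $V$ in the tails, but this is a standard viscosity-type fact for linear parabolic operators and fits precisely into the framework of \cref{subsec:Prelim0} already used elsewhere in the paper.
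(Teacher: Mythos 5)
There is a genuine gap at the heart of your construction: you take $\overline{W}_\varepsilon=\max\{Ap_\varepsilon,V\}$ and appeal to "the max of two classical supersolutions is a generalized supersolution" as a standard fact. It is the opposite closure property that holds: the \emph{minimum} of supersolutions (and the \emph{maximum} of subsolutions) is a generalized super(sub)solution; the maximum of two supersolutions is not one in general. Concretely, since $p_\varepsilon$ decays like $e^{-\overline{\kappa}(|x|-R_0)/\varepsilon}$ (\cref{lma:DecayEigenvector}) while $V(x)=e^{C_1-\alpha|x|}$ decays at the fixed rate $\alpha$, there must be crossing points $x_\pm$ with $|x_\pm|>R_0$ where the max switches from $Ap_\varepsilon$ (inside) to $V$ (outside). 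At $x_+$ the left derivative $\partial_x(Ap_\varepsilon)(x_+)$ is strictly smaller than the right derivative $\partial_xV(x_+)$, i.e.\ the kink is convex, so no function $\tilde{w}\in C^{2,1}$ can satisfy $\tilde{w}\geq\overline{W}_\varepsilon$ near $x_+$ with equality at $x_+$ (touching from above forces $\partial_x\overline{W}_\varepsilon(x_++)\leq\tilde{w}'(x_+)\leq\partial_x\overline{W}_\varepsilon(x_+-)$, which is impossible here). Hence the generalized supersolution condition of \cref{subsec:Prelim0} fails exactly at the crossing points, and the comparison principle cannot be invoked; this is not a removable technicality, since diffusion genuinely transports mass across a convex kink. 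The rest of your computations (that $Ap_\varepsilon$ is an exact stationary solution, the tail inequality for $V$ with $\alpha$ small, the dominance of $n_0$) are fine, and the \emph{shape} of your bounding function is essentially the right one.

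The paper circumvents this obstruction by not seeking a single global-in-space supersolution valid from $t=0$. Instead it (i) applies the principal Floquet bundle result (\cref{cor:FloquetBundle}) to $W_\varepsilon e^{\eta t}$ to get $\Vert W_\varepsilon(\cdot,t)-\alpha_\varepsilon p_\varepsilon\Vert_{L^\infty}\to0$, hence $W_\varepsilon\leq 2\alpha_\varepsilon p_\varepsilon$ on $[-R_0,R_0]$ for $t\geq T_1$; (ii) controls the transient $t\leq T_1$ by the crude comparison $W_\varepsilon\leq e^{C_1-C_2|x|+(a_M+\lambda_\varepsilon)t}$; and (iii) applies the comparison principle only on the exterior domain $\left((-\infty,-R_0)\cup(R_0,\infty)\right)\times(T_1,\infty)$, where the pure exponential is a classical supersolution (there $a(x)<-\delta$, so no eigenfunction piece is needed) and the lateral boundary data at $x=\pm R_0$ is supplied by step (i). If you want to repair your argument, this domain-decomposition is the natural fix: keep your exponential $V$ for the tails, but replace the global max-comparison by the Floquet estimate on the compact core and a comparison argument posed only outside $[-R_0,R_0]$ with matched boundary values.
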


\begin{proof}

Let $\eta$ be a small constant. Then $u_\eta(x,t)=W_\varepsilon(x,t)e^{\eta{t}}$ solves 
    \[\partial_{t}u_\eta-\varepsilon{}c\partial_{x}u_\eta-\varepsilon^2\partial_{xx}u_\eta=u_\eta\left(a(x)+\lambda_\varepsilon+\eta\right).\]
    The function $\phi_\eta(x,t)=e^{\eta{t}}p_\varepsilon(x)$ is a positive entire solution of this differential equation that  satisfies hypothesis (C2) as in \cref{subsec:Prelim}. We thus apply Corollary 4 and find
    \[\frac{\Vert{}u_\eta(.,t)-\alpha_\varepsilon\phi_\eta(.,t)\Vert_{L^{\infty}(\mathbb{R})}}{\Vert{}\phi_\eta(.,t)\Vert_{L^{\infty}(\mathbb{R})}}\xrightarrow[t\rightarrow\infty]{}0.\]
    The $e^{\eta{t}}$ terms cancel, so that $\Vert{}W_\varepsilon(x,t)-\alpha_\varepsilon{}p_{1,\varepsilon}(x)\Vert_{L^\infty(\mathbb{R})}\xrightarrow[t\rightarrow\infty]{}0$. Thus there is a $T_1$ such that $W(x,t)<2\alpha_\varepsilon{}p_\varepsilon(x)$ for all $(x,t)\in{}[-R_0,R_0]\times{}[T_1,\infty)$. 
   We next claim that $W_\varepsilon(x,t)\leq{}\tilde{W}_\varepsilon(x,t)=e^{C_1-C_2|x|+(a_{M}+\lambda_\varepsilon)t}$. Substituting $\tilde{W}_\varepsilon(x,t)$ into the differential equation gives
   \begin{align*}
    \partial_{t}\tilde{W}_\varepsilon-\mathcal{L}\tilde{W}_\varepsilon\geq{}-\varepsilon^{2}C_2^2+\varepsilon{}C_2-(a(x)-a_{M}),
\end{align*}
    which is positive, for small enough $\varepsilon$, by the definition of $a_{M}$ as the maximum of $a(x)$. Since $n_0(x)\leq{}e^{C_1-C_2|x|}$ by assumption (A3), the comparison theorem yields the desired result.
    
We define $\overline{W}(x)$ as
\[\overline{W}_\varepsilon(x)=\begin{cases}
    e^{C_3+C_1-C_2|x|+(a_{1,M}+\lambda_\varepsilon)T_1}~~&x<-R_0,\\
    2\alpha_\varepsilon{}p_\varepsilon(x)~~&|x|<R_0,\\
       e^{C_4+C_1-C_2|x|+(a_{1,M}+\lambda_\varepsilon)T_1}~~&x>R_0,
\end{cases}\]
where we pick $C_3$ and $C_4$ to ensure that $\overline{W}_\varepsilon(\pm{}R_0)=2\alpha_\varepsilon{}p_\varepsilon(\pm{}R_0)$. By construction $\overline{W}_\varepsilon$ dominates $W_\varepsilon(x,t)$ on the parabolic boundary of $((-\infty,-R_0)\cup{}(R_0,\infty))\times{}(T_1,\infty)$. We also check that 
\begin{align*}
    \partial_{t}\overline{W}_\varepsilon-\mathcal{L}\overline{W}_\varepsilon\geq{}-\varepsilon^{2}C_2^2+\varepsilon{}C_2-(-\delta+\lambda_\varepsilon),
\end{align*}
which is positive for $\varepsilon$ small enough. Thus by an application of the comparison principle, $W_\varepsilon(x,t)\leq{}\overline{W}_\varepsilon(x,t)$ for $(x,t)\in((-\infty,-R_0)\cup{}(R_0,\infty))\times{}(0,\infty)$. In particular $W_\varepsilon(x,t)$ is bounded by a constant in time integrable function.

\end{proof}

We can now proceed with the proof of \cref{lma:ConvergenceToEigenvalue}.

\begin{proof}[Proof of \cref{lma:ConvergenceToEigenvalue}]
The first part follows similarly to the proof of proposition 2 in \cite{figueroa2018long}. We again let $\eta$ be a small constant, and now let $u_\eta(x,t)=N_\varepsilon(x,t)e^{\frac{cx}{2\varepsilon}+\int_{0}^{t}\rho_{\varepsilon}(s)ds+\lambda_{\varepsilon}t+\eta{t}}$ which solves 
    \[\partial_{t}u_\eta-\varepsilon^2\partial_{xx}u_\eta=u_\eta\left(a(x)-\frac{c^2}{4}+\lambda_\varepsilon+\eta\right).\]
    The function $\phi_\eta(x,t)=e^{\eta{t}}P_\varepsilon(x)$ is a positive entire solution to the differential equation (without initial conditions) that  satisfies hypothesis (C2) as in \cref{subsec:Prelim}. We thus apply Corollary 4 and find:
    \[\frac{\Vert{}u_\eta(.,t)-\alpha_\varepsilon\phi_\eta(.,t)\Vert_{L^{\infty}(\mathbb{R})}}{\Vert{}\phi_\eta(.,t)\Vert_{L^{\infty}(\mathbb{R})}}\xrightarrow[t\rightarrow\infty]{}0.\]
    The $e^{\eta{t}}$ terms cancel  so we obtain:
\[\Vert{}N_\varepsilon(x,t)e^{\frac{cx}{2\varepsilon}x+\int_{0}^{t}\rho_{\varepsilon}(s)ds+\lambda_{\varepsilon}t}-\alpha_{\varepsilon}P_\varepsilon{}\Vert_{L^{\infty}(\mathbb{R})}\xrightarrow[t\rightarrow{\infty}]{}0.\]
    Moreover, this convergence is exponential. We recall $P_\varepsilon=p_\varepsilon{}e^\frac{cx}{2\varepsilon}$ and rewrite in terms ${p}_\varepsilon$:
    \[\Vert{}N_\varepsilon(x,t)e^{\frac{cx}{2\varepsilon}+\int_{0}^{t}\rho_{\varepsilon}(s)ds+\lambda_{\varepsilon}t}-\alpha_{\varepsilon}{p}_\varepsilon{}e^{\frac{cx}{2\varepsilon}}\Vert_{L^{\infty}(\mathbb{R})}\xrightarrow[t\rightarrow{\infty}]{}0.\]
    This implies 
     \begin{equation}\label{eqn:Sigma_1Def}
N_\varepsilon(x,t)e^{\int_{0}^{t}\rho_{\varepsilon}(s)ds+\lambda_{\varepsilon}t}=\alpha_{\varepsilon}{p}_\varepsilon{}+\Sigma_{1}(x,t)e^{-\frac{cx}{2\varepsilon}},
     \end{equation}
    where $\Vert{}\Sigma_{1}(x,t)\Vert_{L^{\infty}(\mathbb{R})}\xrightarrow[t\rightarrow{\infty}]{}0$ exponentially.
    We use this to write
    \[\frac{N_\varepsilon}{\rho_{\varepsilon}}=\frac{\alpha_\varepsilon{}{p}_\varepsilon+\Sigma_{1}(x,t)e^{-\frac{cx}{2\varepsilon}}}{\int{}\alpha_\varepsilon{}{p}_\varepsilon{}+\Sigma_{1}(y,t)e^{-\frac{cx}{2\varepsilon}}dy},\] 
    and
    \[\rho_{\varepsilon}(t)=\int_{\mathbb{R}}\alpha_\varepsilon{}{p}_\varepsilon{}dy+\int_{\mathbb{R}}\Sigma_{1}(y,t)e^{-\frac{cx}{2\varepsilon}}dy.\]
    We need to show that the latter term converges to $0$ as $t\rightarrow{\infty}$. By  \cref{lma:SuperSoln} and \eqref{eqn:Sigma_1Def}
    \[\Sigma_{1}(x,t)e^{-\frac{cx}{2\varepsilon}}\leq{}\overline{W}_\varepsilon(x)+\alpha_\varepsilon{}p_\varepsilon,\]
    and each term on the right hand side is integrable. Thus by the Dominated Convergence Theorem, using the exponential decay of $\Sigma_1(x,t)$ in time, we have $\int_{\mathbb{R}}\Sigma_1(x,t)e^{-\frac{cx}{2\varepsilon}}\xrightarrow[t\rightarrow{\infty}]{}0$.
    








    This completes the proof.
\end{proof}

We can now prove \cref{lma:rhoConvergence}.

\begin{proof}[Proof of \cref{lma:rhoConvergence}]
This follows similarly to \cite{figueroa2018long}. Computations identical to the preceding lemma show that
\[\left\Vert{}\frac{\int_{}a(y)N_\varepsilon(y,t)dy}{\rho_{\varepsilon}(t)}-\int_{}a(y){p}_\varepsilon(y)dy\right\Vert_{L^\infty(\mathbb{R})}\xrightarrow[t\rightarrow{\infty}]{}0.\]
This convergence is exponential. 
By integrating the original equation, we obtain:
\[\frac{d\rho_{\varepsilon}}{dt}=\rho_{\varepsilon}\left(\int_{}a(y){p}_\varepsilon(y){dy}+\Sigma_{2}(t)-\rho_{\varepsilon}\right),\]
where $\Sigma_{2}(t)$ is exponentially decreasing. By applying Gronwall's lemma, we can show the long term limit of $\rho_{\varepsilon}$ is $\int_{}a(y){p}_\varepsilon(y)$. This can be done as follows:

Firstly, we claim $\rho_{\varepsilon}(t)$ is eventually bounded above by $\overline{\gamma}=\int_{}a(y){p}_\varepsilon(y)dy+2|\Sigma_{2}(T)|$ for any fixed $T$. Suppose that $\rho_{\varepsilon}(t)$ exceeds $\overline{\gamma}$ at some point $t_{1}>T$ and let $t_{2}$ be next time where $\rho_{\varepsilon}(t)=\overline{\gamma}$ if such a point exists and be $\infty$ otherwise. Then for all $t\in(t_{1},t_{2})$
\begin{align*}
    \frac{d\rho_{\varepsilon}}{dt}&\leq{}\rho_{\varepsilon}\left(\Sigma_{2}(t)-2|\Sigma_{2}(T)|\right)\\
    &\leq{}-\rho_{\varepsilon}|\Sigma_{2}(T)|.
\end{align*}
Thus by Gronwall's inequality, 
\[\rho_{\varepsilon}(t)\leq{}\rho_{\varepsilon}(t_{1})e^{-(t-t_{1})|\Sigma_2(T)|}.\]
This implies $t_{2}$ is finite. But now it is clear $\rho_{\varepsilon}'(t)$ will be negative if $\rho_{\varepsilon}$ exceeds $\overline{\gamma}$ again, thus $\overline{\gamma}$ is an upper bound for all $t>t_{1}$.
Since this is true for any $T>0$ this shows:
\[\limsup_{t\rightarrow\infty}\rho_{\varepsilon}(t)=\int_{}a(y){p}_\varepsilon(y){dy}.\]

We must still find a lower bound. Firstly, since $N\geq{0}$ we can ensure $0$
is a lower bound for $\rho_{\varepsilon}(t)$. 
We define $\hat{\rho_{\varepsilon}}(t)=\rho_{\varepsilon}(t)e^{-\int_{0}^{t}\Sigma_2(s)ds}$ which solves:
\[\frac{d\hat{\rho_{\varepsilon}}}{dt}=\hat{\rho_{\varepsilon}}\left(\int_{}a(y){p}_\varepsilon(y){dy}-\hat{\rho_{\varepsilon}}e^{\int_{0}^{t}\Sigma(s)ds}\right).\]
The term $e^{\int_{0}^{t}\Sigma_2(s)ds}$ is bounded above by $k=e^{\int_{0}^{\infty}|\Sigma_2(s)|ds}$ so we have:
\[\frac{d\hat{\rho_{\varepsilon}}}{dt}\geq{}\hat{\rho_{\varepsilon}}\left(\int_{}a(y){p}_\varepsilon(y){dy}-k\hat{\rho_{\varepsilon}}\right).\]
This means that $\frac{d\hat{\rho_{\varepsilon}}}{dt}$ is positive as long as $\hat{\rho_{\varepsilon}}\leq{}\frac{\int_{}a(y){p}_\varepsilon(y){dy}{}}{k}$. In particular this means $\hat{\rho_{\varepsilon}}$ is increasing and thus $\rho_{\varepsilon}$ has:
\[\liminf_{t\rightarrow\infty}\rho_{\varepsilon}\geq{}\rho_\text{min}>0.\] 
The lower bound is important in what follows. 
Proceeding similarly as before we define $\underline{\gamma}=\int_{}a(y){p}_\varepsilon(y){dy}-2|\Sigma_{2}(T)|$ for an arbitrary $T$. We suppose there is a time $t_{3}>T$ such that $\int_{}a(y){p}_\varepsilon(y)+\Sigma_{2}(t_{3})>0$ and $\frac{\rho_\text{min}}{2}<\rho_{\varepsilon}(t)<\underline{\gamma}$ for all $t\in(t_3,t_4)$ where $t_4$ is the next time such that $\rho_\varepsilon=\underline{\gamma}$ or $\infty$ if no such time exists. 

Then:
\[\frac{d\rho_{\varepsilon}}{dt}\geq{}\frac{\rho_\text{min}}{2}\left(\int{}a(x)p_\varepsilon-|\Sigma_2(T)|-\rho_{\varepsilon}\right).\]
Let $\tilde{\rho_{\varepsilon}}=\int_{}a(y){p}_\varepsilon(y){dy}-|\Sigma_2(T)|-\rho_{\varepsilon}$. Then $\tilde{\rho_{\varepsilon}}$ satisfies:
\[\frac{d\tilde{\rho_{\varepsilon}}}{dt}\leq{}-\frac{\rho_\text{min}}{2}\tilde{\rho_{\varepsilon}}.\]
Similarly to before, $\tilde{p}_\varepsilon$ is decreasing (and thus $\rho_{\varepsilon}$ is increasing) at an exponential rate in this case and therefore $t_4$ is finite. But $\frac{d\rho_\varepsilon}{dt}(t)>0$ for any $t>t_4$ such that $\rho_\varepsilon(t)=\underline{\gamma}$ and
so cannot decrease below $\underline{\gamma}$ again. This gives:
\[\liminf_{t\rightarrow\infty}\rho_{\varepsilon}(t)=\int_{}a(y){p}_\varepsilon(y){dy}.\]
\end{proof}

\begin{proof}[Proof of \cref{thm:EigenvalueConvergence}]
    \cref{thm:EigenvalueConvergence} follows by combining  \cref{lma:EigenvectorConcentration,lma:ConvergenceToEigenvalue,lma:rhoConvergence}.
\end{proof}

\subsection{Proof of \cref{thm:TwoPeaksMainResult}}\label{subsec:Proof2}

For this section, we prove the main theorem concerning the solution of \eqref{eqn:TwoPeaksDifferentDirections}, and we recall that we suppress the $\varepsilon$ notation for the solutions to the transformed problems. In order to prove \cref{thm:TwoPeaksMainResult}, we aim to determine local growth rates (in terms of the eigenvalues $\lambda_{i,\varepsilon}$) of solutions.
\begin{lemma}\label{lma:EigenvectorDecay2}
        Let $p_{i,\varepsilon}^{\infty}(x)$ be the $L_\infty(\mathbb{R})$ normalized solution to \eqref{eqn:TwoPeaksDifferentDirectionsLinEigen}. We have the following bounds:
    \[e^{-\frac{\underline{\kappa}_{i}|x-x_\varepsilon|}{\varepsilon}}\leq{}p_{i,\varepsilon}^{\infty}(x)\leq{}\min\left\{1,e^{\frac{-\overline{\kappa}_{i}(|x|-R_0)}{\varepsilon}}\right\},~~x\in\mathbb{R},\]
    where $x_\varepsilon$ is a point such that $p_{i,\varepsilon}^{\infty}(x_\varepsilon)=1$, $\underline{\kappa}_i=\frac{c_i}{2}$ and $\overline{\kappa}_i=\frac{-c_i+\sqrt{2c_i^2+4\delta}}{2}$. 
    Moreover, we have that $\Vert{}p_{i,\varepsilon}\Vert_{L^\infty(\mathbb{R})}\leq{}\frac{K_1}{\varepsilon}$, where $K_1$ is independent of $\varepsilon$.
\end{lemma}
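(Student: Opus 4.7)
My plan is to mirror the strategy of Lemma \ref{lma:DecayEigenvector}, replacing the scalar $c$ with $c_i$ and the constant $a(x)$ outside $B_{R_0}$ with $0$ (since now $a_i$ is compactly supported in $[-R_0, R_0]$), and then to add a short integration step to deduce the $L^\infty$ bound on the $L^1$-normalized eigenfunction. Throughout, let $\mathcal{L}_i(u) := -\varepsilon^2 \partial_{xx} u - \varepsilon c_i \partial_x u - (a_i(x) - \delta)u$, so that $\mathcal{L}_i(p_{i,\varepsilon}^\infty) = \lambda_{i,\varepsilon}\,p_{i,\varepsilon}^\infty$. I will also use the fact, provable exactly as in Lemma \ref{lma:UniformR} applied to the Case~2 eigenvalue problem, that $\lambda_{i,\varepsilon}$ converges to $-(a_{i,M} - c_i^2/4 - \delta)$ and in particular is uniformly bounded in $\varepsilon$.

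For the upper bound, I would substitute the ansatz $\psi(x) = e^{-\overline{\kappa}_i(|x|-R_0)/\varepsilon}$ into $\mathcal{L}_i$ on each of the half-lines $\{x > R_0\}$ and $\{x < -R_0\}$, where $a_i \equiv 0$. The resulting sufficient condition reduces, on the ``bad'' side where the drift sign works against us, to a quadratic inequality in $\overline{\kappa}_i$; one checks that $\overline{\kappa}_i = \tfrac{-c_i + \sqrt{2c_i^2 + 4\delta}}{2}$ solves the relevant marginal equation once the bound on $\lambda_{i,\varepsilon}$ is fed in. Since $\psi(\pm R_0) = 1 \ge p_{i,\varepsilon}^\infty(\pm R_0)$ by the $L^\infty$ normalization, the generalized comparison principle collected in \cref{subsec:Prelim0} gives $p_{i,\varepsilon}^\infty(x) \le \psi(x)$ for $|x| > R_0$; inside $B_{R_0}$ the normalization itself provides the trivial bound $1$, and these two combine into $\min\{1, \psi\}$.

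For the lower bound, I would fix a point $x_\varepsilon$ at which the supremum $p_{i,\varepsilon}^\infty(x_\varepsilon) = 1$ is attained (existence of such a point follows from the decay already proved in the previous step). Taking $\phi(x) = e^{-\underline{\kappa}_i |x - x_\varepsilon|/\varepsilon}$, a direct computation of $\mathcal{L}_i \phi$ on each side of $x_\varepsilon$ gives the sufficient subsolution condition, and the choice $\underline{\kappa}_i = c_i/2$ collapses it to $c_i^2/4 - (a_{i,M} - \delta + \lambda_{i,\varepsilon}) \le 0$, which holds thanks to the limiting value of $\lambda_{i,\varepsilon}$. Since $\phi(x_\varepsilon) = 1 = p_{i,\varepsilon}^\infty(x_\varepsilon)$ and $\phi$ decays at infinity, the generalized comparison principle again yields $\phi \le p_{i,\varepsilon}^\infty$; the corner of $\phi$ at $x_\varepsilon$ is exactly of the admissible one-sided type for subsolutions, so no extra regularisation is needed.

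Finally, the $L^\infty$ bound on $p_{i,\varepsilon}$ is immediate once we observe that $p_{i,\varepsilon} = p_{i,\varepsilon}^\infty / \|p_{i,\varepsilon}^\infty\|_{L^1}$, so $\|p_{i,\varepsilon}\|_{L^\infty} = 1/\|p_{i,\varepsilon}^\infty\|_{L^1}$. Integrating the lower bound from Step~2 on, say, the window $[x_\varepsilon - \varepsilon, x_\varepsilon + \varepsilon]$ gives
\[
\|p_{i,\varepsilon}^\infty\|_{L^1} \ge \int_{x_\varepsilon - \varepsilon}^{x_\varepsilon + \varepsilon} e^{-\underline{\kappa}_i |x - x_\varepsilon|/\varepsilon}\, dx = \frac{2\varepsilon}{\underline{\kappa}_i}\bigl(1 - e^{-\underline{\kappa}_i}\bigr),
\]
so that $\|p_{i,\varepsilon}\|_{L^\infty} \le K_1/\varepsilon$ with $K_1$ independent of $\varepsilon$. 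The main obstacle I expect is bookkeeping around the sign of $c_i$: since $c_1 < 0 < c_2$, the decay rate $\underline{\kappa}_i$ and the quadratic defining $\overline{\kappa}_i$ must be verified on the correct half-line in each case, and it is there that the somewhat asymmetric-looking constants arise. Everything else reduces to the same comparison-principle and Rayleigh-quotient machinery already deployed in Case~1.
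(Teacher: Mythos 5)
Your proposal matches the paper's proof, which simply asserts that the decay bounds follow identically to \cref{lma:DecayEigenvector} (with $c$ replaced by $c_i$ and $a_i-\delta\equiv-\delta$ outside $B_{R_0}$) and then deduces $\Vert p_{i,\varepsilon}\Vert_{L^\infty}\leq K_1/\varepsilon$ by exactly the integration of the lower bound that you carry out. One small correction: the kink of $e^{-\underline{\kappa}_i|x-x_\varepsilon|/\varepsilon}$ at $x_\varepsilon$ is a local maximum, i.e.\ locally a \emph{minimum} of two smooth branches, which is the admissible corner type for generalised \emph{super}solutions rather than subsolutions; the clean route to the lower bound is to apply the comparison principle separately on $(-\infty,x_\varepsilon)$ and $(x_\varepsilon,\infty)$, treating $x_\varepsilon$ as a boundary point where both functions equal $1$. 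Also, as your bookkeeping remark anticipates, for $i=1$ one has $c_1<0$ and the lower-bound rate must be $|c_1|/2$ (otherwise the claimed subsolution grows at infinity and its $L^1$ norm diverges), which is a sign typo in the lemma statement rather than a flaw in your argument.
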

\begin{proof}
        This follows identically to the proof of \cref{lma:DecayEigenvector}. The $L^\infty$ bound on $p_{i,\varepsilon}$ follows from writing  $p_{i,\varepsilon}=\frac{p_{i,\varepsilon}^\infty}{\Vert{p_{i,\varepsilon}^\infty}\Vert_{L^{1}(\mathbb{R})}}\leq{}\frac{2}{\varepsilon{}\underline{\kappa_{i}}}$ where the inequality is a consequence of the lower bound on $p_{i,\varepsilon}^{\infty}$.
\end{proof}

The next lemma is the main ingredient in the proof. We let $W_1(x,t):=N_1(x,t)e^{\int_{0}^{t}\rho(s)ds+\lambda_{1,\varepsilon}{t}}$  which solves
\begin{equation}\label{eqn:22}
\begin{cases}
    \partial_{t}W_1-\mathcal{L}_1W_1=0, \\
    W_1(x,0)=n_0(x),
\end{cases}
\end{equation}
where $\mathcal{L}_1w:=\varepsilon{}c_1\partial_{x}w+\varepsilon^2\partial_{xx}w+w(a_1(x)+a_2(x-\varepsilon{}Ct)-\delta+\lambda_{1,\varepsilon})$. We will now establish that $W_1(x,t)$ decays exponentially in a ball that contains the support of $a_2(x-\varepsilon{}Ct)$. The main result will follow from the bounds we obtain as a corrolarly of this result.

\begin{lemma}\label{lma:LocalisedSuperSoln}
    We assume \ref{assum:B1}-\ref{assum:B4}, and that $a_{1,M}-\frac{c_1^2}{4}>a_{2,M}-\frac{c_2^2}{4}>\delta$. Then $W_1(x,t)\leq{}e^{\frac{K}{\varepsilon}-\left(\eta-\frac{\gamma}{\varepsilon}\right){t}}$ for $(x,t)\in{}[-R_0+\varepsilon{C}t,R_0+\varepsilon{}Ct]\times\left(\frac{\gamma}{\varepsilon},\infty\right)$ where the positive constants $K$ and $\eta$ are independent of $\varepsilon$.
\end{lemma}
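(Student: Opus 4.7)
The plan is to exploit the gap $\lambda_{2,\varepsilon} - \lambda_{1,\varepsilon} > 0$, which follows for small $\varepsilon$ from the hypothesis $a_{1,M} - c_1^2/4 > a_{2,M} - c_2^2/4$ together with the eigenvalue limits $\lambda_{i,\varepsilon} \to -(a_{i,M} - c_i^2/4 - \delta)$ (obtained by the argument of \cref{lma:UniformR} applied to \eqref{eqn:TwoPeaksDifferentDirectionsLinEigen}, since this problem has the same form as \eqref{eqn:MoreGeneralProblemShift} with $a(x)$ replaced by $a_i(x) - \delta$). The supersolution will be a decaying multiple of the peak-2 eigenfunction translated into peak 1's frame.

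The key computation is as follows. Setting $q(x,t) := p_{2,\varepsilon}(x - \varepsilon C t)$ with $p_{2,\varepsilon}$ solving \eqref{eqn:TwoPeaksDifferentDirectionsLinEigen}, and using $c_1 + C = c_2$ together with the eigenvalue equation, a direct calculation yields
\begin{equation*}
\partial_t q - \varepsilon c_1 \partial_x q - \varepsilon^2 \partial_{xx} q - q\bigl(a_1(x) + a_2(x - \varepsilon C t) - \delta + \lambda_{1,\varepsilon}\bigr) = q\bigl(\lambda_{2,\varepsilon} - \lambda_{1,\varepsilon} - a_1(x)\bigr).
\end{equation*}
Hence, for any $\eta \in (0, \lambda_{2,\varepsilon} - \lambda_{1,\varepsilon})$, the function $\phi(x,t) := A \, q(x,t)\, e^{-\eta t}$ is a supersolution of \eqref{eqn:22} wherever $a_1(x) = 0$. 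Choosing $\gamma > 2 R_0/C$ places the target strip $[-R_0 + \varepsilon C t, R_0 + \varepsilon C t]$ inside $(R_0, \infty)$ for every $t > \gamma/\varepsilon$, so on this strip $\phi$ is a genuine supersolution with $\eta$ bounded below by a positive constant independent of $\varepsilon$.

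The remainder of the proof is parabolic comparison on the moving strip, for which I must verify the initial and lateral data. At $t = \gamma/\varepsilon$, a crude global supersolution of the form $e^{C_1 - C_2|x| + Bt}$ with $B := a_{1,M} + a_{2,M} - \delta + \lambda_{1,\varepsilon}$, constructed analogously to $\tilde W_\varepsilon$ in the proof of \cref{lma:SuperSoln} (now with the extra $a_2$ contribution absorbed into $B$), gives $W_1(\cdot, \gamma/\varepsilon) \leq e^{O(1/\varepsilon)}$ on the relevant bounded range $|x| \leq R_0 + \gamma C$; combined with the lower bound $p_{2,\varepsilon}(y) \geq e^{-O(1/\varepsilon)}$ from \cref{lma:EigenvectorDecay2} (after translation), this forces $A \leq e^{O(1/\varepsilon)}$. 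The same crude supersolution bounds $W_1$ at the lateral boundary $x = \pm R_0 + \varepsilon C t$, possibly at the cost of enlarging $A$ by another factor of $e^{O(1/\varepsilon)}$. Substituting back into $\phi$ and using the $L^\infty$ control of $p_{2,\varepsilon}$ from \cref{lma:EigenvectorDecay2} produces a bound of the stated form, with the $e^{K/\varepsilon}$ prefactor coming entirely from $A$ and the time dependence from $e^{-\eta t}$.

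The principal difficulty is precisely that $\phi$ ceases to be a supersolution inside the support of $a_1$, where the quantity $\lambda_{2,\varepsilon} - \lambda_{1,\varepsilon} - a_1(x)$ may be negative. This is what forces the statement to be local to the moving support of $a_2$ rather than global: restricting the comparison to the cylinder $[-R_0 + \varepsilon C t, R_0 + \varepsilon C t] \times (\gamma/\varepsilon, \infty)$, which is disjoint from $\{a_1 \neq 0\}$ once $t > \gamma/\varepsilon$, is exactly the maneuver that sidesteps this obstruction. The only delicate bookkeeping is then to ensure that the constants $K$ and $\eta$ in the final bound are tracked to be independent of $\varepsilon$, which follows from the uniformity in $\varepsilon$ of the limits for $\lambda_{i,\varepsilon}$ and of the exponential decay estimates in \cref{lma:EigenvectorDecay2}.
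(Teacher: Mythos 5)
Your key identity is correct: with $q(x,t)=p_{2,\varepsilon}(x-\varepsilon Ct)$ and $c_1+C=c_2$, one indeed gets $\partial_t q-\mathcal{L}_1q=q\left(\lambda_{2,\varepsilon}-\lambda_{1,\varepsilon}-a_1(x)\right)$, so $Aq\,e^{-\eta t}$ is a supersolution of \eqref{eqn:22} wherever $a_1$ vanishes, and the strip $[-R_0+\varepsilon Ct,R_0+\varepsilon Ct]$ lies in $\{a_1=0\}$ once $t>\gamma/\varepsilon$ with $\gamma\geq 2R_0/C$. This is a genuinely different (and more economical) supersolution than the paper's, which builds a \emph{global} piecewise WKB-type phase $\overline{\phi}=\max\{\overline{u}_1-\tfrac{c_1x}{2}+\varepsilon\nu_1t,\ \overline{u}_2(x-\varepsilon Ct)-\tfrac{c_2(x-\varepsilon Ct)}{2}+\varepsilon\nu_2t\}+K$ and verifies the generalised-supersolution conditions (C1)--(C3) at the gluing points, so that comparison can be run on all of $\mathbb{R}\times(\gamma/\varepsilon,\infty)$ with only initial data to check.

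The gap in your argument is the lateral boundary of the moving cylinder. Comparison on $[-R_0+\varepsilon Ct,R_0+\varepsilon Ct]\times(\gamma/\varepsilon,\infty)$ requires $\phi\geq W_1$ on the curves $x=\pm R_0+\varepsilon Ct$ for \emph{all} $t>\gamma/\varepsilon$, and you propose to control $W_1$ there by the crude global supersolution $e^{C_1-C_2|x|+Bt}$ with $B=a_{1,M}+a_{2,M}-\delta+\lambda_{1,\varepsilon}$. But $\lambda_{1,\varepsilon}\approx-(a_{1,M}-c_1^2/4-\delta)$ gives $B\approx a_{2,M}+c_1^2/4>0$, so on the lateral boundary this bound behaves like $e^{(B-\varepsilon CC_2)t}$, which \emph{grows} exponentially in $t$ for small $\varepsilon$, while your $\phi=Aq\,e^{-\eta t}$ decays. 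No time-independent enlargement of $A$ (by $e^{O(1/\varepsilon)}$ or otherwise) can reconcile a growing boundary datum with a decaying supersolution, so the comparison cannot be closed. The lateral boundary is precisely the edge of the set on which you are trying to prove decay, so any a priori decaying bound there is essentially the conclusion of the lemma; this circularity is what forces the paper to work globally, letting the non-decaying peak-1 branch of $\overline{\phi}$ take over outside the moving strip (with linear growth in $|x|$ at rate $\sqrt{a_{i,M}}$ plus the $-c_ix/2$ tilt to dominate the exponential tails of $W_1$), and confining the boundary check to the initial time slice $t=\gamma/\varepsilon$. To repair your proof you would need to extend your supersolution to a global one — for instance by taking the maximum of $Aq\,e^{-\eta t}$ with a suitable non-decaying supersolution adapted to peak 1 and verifying the generalised-supersolution condition at the crossing point, which is essentially what the paper does.
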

\begin{proof}

    To establish the lemma, we construct a supersolution of the form $\overline{W}(x,t)=e^{\varepsilon^{-1}\overline{\phi}(x,t)}$. We now define the function $\overline{\phi}$ and show that $\overline{W}$ satisfies the conditions, given in \cref{subsec:Prelim0}, to be a supersolution.
    Firstly we note that
    \begin{align*}
        \partial_{t}\overline{W}-\mathcal{L}_1\overline{W}&=\overline{W}\left(\partial_{t}\overline{\phi}-\varepsilon{}\partial_{xx}\overline{\phi}-|\partial_{x}\overline{\phi}|^2-c_1\partial_{x}\overline{\phi}-(a_1(x)+a_2(x-\varepsilon{C}t)-\delta+\lambda_{1,\varepsilon})\right)\\
        &=\overline{W}\left(\partial_{t}\overline{\phi}-\varepsilon{}\partial_{xx}\overline{\phi}-\left|\partial_{x}\overline{\phi}+\frac{c_1}{2}\right|^2-\left(a_1(x)+a_2(x-\varepsilon{C}t)-\delta+\lambda_{1,\varepsilon}-\frac{c_1^2}{4}\right)\right)
    \end{align*}

     We will define $\overline{\phi}(x,t)$ piecewise, where each piece is $C^{2}(\mathbb{R}\times(0,\infty))$. Therefore, to establish that it is a supersolution, we require that:
    \begin{enumerate}[label=(C\arabic*)]
        \item  $\overline{\phi}\in{}C^{1}(\mathbb{R},(0,\infty))$.
        \item The differential inequality $\partial_{t}\overline{\phi}-\varepsilon{}\partial_{xx}\overline{\phi}-|\partial_{x}\overline{\phi}|^2-c_1\partial_{x}\overline{\phi}-(a_1(x)+a_2(x-\varepsilon{C}t)-\delta+\lambda_{1,\varepsilon})\geq{0}$ is satisfied in the classical sense wherever $\phi$ is sufficiently smooth.
        \item At each point $(x_0,t_0)$ where $\overline{\phi}$ fails to be $C^{2,1}(\mathbb{R}\times(0,\infty))$ there exists an open neighbourhood $U$ containing $(x_0,t_0)$ and a function $\tilde{w}\in{}C^{2,1}(U)$ such that $\tilde{w}(x,t)\geq{}\phi(x,t)$ for  $(x,t)\in{}U$,  $\tilde{w}(x_0,t_0)=\overline{\phi}(x_0,t_0)$, and $\tilde{w}$ satisfies $\partial_{t}\phi-\varepsilon{}\partial_{xx}\tilde{w}-|\partial_{x}\tilde{w}|^2-c_1\partial_{x}\tilde{w}-(a_1(x)+a_2(x-\varepsilon{C}t)-\delta+\lambda_{1,\varepsilon})\geq{0}$ in the classical sense.
    \end{enumerate}
    
The condition (C3) is satisfied at a boundary point $x_b$ if $\overline{\phi}$ is locally the minimum of two functions which satisfy the differential inequality in the classical sense in a neighborhood of that boundary point. To locally be the minimum of such functions requires that the left derivative is greater than the right derivative, i.e $\partial_{x}\overline{\phi}(x_b+,t)>\partial_{x}\overline{\phi}(x_b-,t)$. To check  that (C3) holds at $x_b$, it is enough to check this condition and there is a small neighbourhood of $x_b$ such that each piece of the $\overline{\phi}$ can be extended smoothly across the boundary, which in particular will be true if the differential inequality $(C2)$ is not strict at $x_b$.

We will now construct $\overline{\phi}$. Firstly, we define the function $\overline{u}_1(x,t)$  as follows
\begin{equation}
    \overline{u}_{1}(x,t)=\begin{cases}
        0,&~~(x,t)\in{}[-R_0,\infty]\times{}(0,\infty),\\
        \frac{k_1}{2}(x+2R_0)^2-\frac{k_1}{2}R_0^2,&~~(x,t)\in{}[-2R_0-L_1,-R_0]\times{}(0,\infty),\\
        -\sqrt{a_{1,M}}(x+2R_0+L_1)-\frac{k_1}{2}R_0^2+\frac{k_1}{2}L_1^2,&~~(x,t)\in{}[-\infty,-2R_0-L_1]\times{}(0,\infty).
    \end{cases}
\end{equation}
Similarly, we define $\overline{u}_2(x,t)$ as follows:
\begin{equation}
    \overline{u}_{2}(x,t)=\begin{cases}
        0,&~~(x,t)\in{}[-\infty,R_0]\times{}(0,\infty),\\
        \frac{k_2}{2}(x-2R_0)^2-\frac{k_2}{2}R_0^2,&~~(x,t)\in{}[R_0,2R_0+L_2]\times{}(0,\infty),\\
        \sqrt{a_{2,M}}(x-2R_0-L_1)-\frac{k_2}{2}R_0^2+\frac{k_2}{2}L_2^2,&~~(x,t)\in{}[2R_0+L_2,\infty]\times{}(0,\infty).\\
    \end{cases}
\end{equation}
We take  \[\overline{\phi}(x,t)=\max\left\{\overline{u}_1(x,t)-\frac{c_1x}{2}+\varepsilon\nu_{1}t,\overline{u}_2(x-\varepsilon{C}t,t)-\frac{c_2(x-\varepsilon{}Ct)}{2}+\varepsilon\nu_2t\right\}+K,\]
where $\nu_1=(\lambda_{1,\varepsilon}-\lambda_{1,0})+\varepsilon{}^{\frac{1}{2}}$, and $\nu_2=\varepsilon{}^{\frac{1}{2}}-\eta$, where $\eta>0$. 

We show that conditions (C1)-(C3) hold for each piece of $\overline{\phi}(x,t)$. The continuity of $\overline{\phi}$ is clear by the construction. We check that (C2) and (C3) hold for $\overline{u}_1-\frac{c_1x}{2}+\nu_1t$. The differential inequality becomes
\[\nu_1-\varepsilon\partial_{xx}\overline{u}_1-|\partial_{x}\overline{u}_1|^2-\left(a_1(x)+a_2(x-\varepsilon{C}t)-\frac{c_1^2}{4}-\delta+\lambda_{1,\varepsilon}\right)\geq{0}.\]
We pick $\gamma=\frac{2R_0}{C}$, so the supports of $a_1(x)$ and $a_2(x-\varepsilon{C}t)$ are separated sufficiently that the $a_2$ term vanishes. A sufficient condition is then
\[\varepsilon^{\frac{1}{2}}-\varepsilon\partial_{xx}\overline{u}_1-|\partial_{x}\overline{u}_1|^2+a_{1,M}-a_1(x)\geq{0}.\]
The inequality is clearly satisfied for the constant part. For it to be satisfied at the quadratic part we require 
\begin{equation*}
    \varepsilon^{\frac{1}{2}}-\varepsilon{}k_1-k_1^2|x+2R_0|^2+a_{1,M}\geq{0},
\end{equation*}
which, since $|x+2R_0|^2$ is largest at the boundaries, amounts to satisfying 
\begin{align}
    \varepsilon^{\frac{1}{2}}-\varepsilon{}k_1-(k_1R_0)^2+a_{1,M}\geq{0}\label{eqn:C2u_11},\\
    \varepsilon^{\frac{1}{2}}-\varepsilon{}k_1-(k_1L_1)^2+a_{1,M}\geq{0}.\label{eqn:C2u_12}
\end{align}
To satisfy (C3) at $x=-R_0$ we require that the left hand derivative $\partial_{x}\overline{u}_1(-R_0+,t)>0$  which is clear for any choice of $k_1>0$. In this case we can take $\tilde{w}(x,t)$ as the quadratic part beyond $-R_0$ since it will still satisfy the differential inequality, and because the inequality is slack in the derivative condition, this implies $\overline{u}_1(x,t)$ is locally a minimum of two functions.  To satisfy (C3) at the boundary $x=-2R_0-L_1$ we similarly require that  \begin{equation}\label{eqn:C3LastBoundary}
    -\sqrt{a_{1,M}}>\partial_{x}\overline{u}_1((-2R_0-L_1)+,t)=-k_1L_1.
\end{equation}
We can satisfy \eqref{eqn:C2u_11}, \eqref{eqn:C2u_12}, and \eqref{eqn:C3LastBoundary} simultaneously by choosing $k_1=\sqrt{\frac{a_{1,M}}{2}}$ and $L_1$ to be any positive number such that $\frac{a_{1,M}}{k_1^2}<L_1^2<\frac{{a_{1,M}+\varepsilon^\frac{1}{2}}-\varepsilon}{k_1^2}$, which is possible provided $\varepsilon<k_1^{-\frac{1}{2}}$.

The approach for $\overline{u}_2(x,t)$ is similar, however, the the differential inequality is now
\[\varepsilon^{1/2}-\eta-\varepsilon\partial_{xx}\overline{u}_2-|\partial_{x}\overline{u}_1|^2-\left(a_2(x)-\frac{c_2^2}{4}-\delta+\lambda_{1,\varepsilon}\right)\geq{0}.\]
Using the fact that $\lambda_{2,0}=-a_{2,M}+\frac{c_2^2}{4}+\delta$, a sufficient condition is then
\[\varepsilon^{1/2}-\varepsilon\partial_{xx}\overline{u}_2-|\partial_{x}\overline{u}_1|^2+(\lambda_{2,0}-\lambda_{1,\varepsilon})-\eta\geq{0},\]
where we note that $(\lambda_{2,0}-\lambda_{1,\varepsilon})>0$ for sufficiently small $\varepsilon$ because $a_{1,M}-\frac{c_1^2}{4}>a_{2,M}-\frac{c_2^2}{4}>\delta$. We can now pick $\eta$ as any number satisfying $0<\eta<(\lambda_{2,0}-\lambda_{1,\varepsilon})$. We may take $\varepsilon$ smaller than $\min\{k_{1}^{-\frac{1}{2}},k_{2}^{-\frac{1}{2}}\}$ which depends only on $a_{1,M}$ and $a_{2,M}$.

It remains to deal with the intersection point $z(t)$ between between $\overline{u}_1(x,t)-\frac{c_1x}{2}+\nu_1t$ and $\overline{u}_2(x-\varepsilon{C}t,t)-\frac{c_2(x-\varepsilon{}Ct)}{2}+\nu_2t$. We note that because we have assumed $c_1<0<c_2$ we easily satisfy (C3) if $z(t)\in[R_0,-R_0+\varepsilon{C}t]$. We will pick $\eta$ such that $z(t)$ is contained in this interval for all time. It is  in this interval initially by the choice $t>\frac{2R_0}{\varepsilon{}C}$, and the formula is given by
\begin{align*}
    z(t)&=\varepsilon\frac{2}{C}\left(v_2-v_1+\frac{Cc_2}{2}\right)t\\
    &=\varepsilon\frac{2}{C}\left(-\eta-\lambda_{1,\varepsilon}+\lambda_{1,0}+\frac{Cc_2}{2}\right)t.
\end{align*}

From this we see the condition that needs to be satisfied is 
\[0<-\eta-\lambda_{1,\varepsilon}+\lambda_{1,0}+\frac{Cc_2}{2}<\frac{C^2}{2},\]
but since $c_2<c_2-c_1=C$ this is satisfied for any $-\lambda_{1,\varepsilon}+\lambda_{1,0}<\eta<-\lambda_{1,\varepsilon}+\lambda_{1,0}+\frac{Cc_2}{2}$. Thus, because $\lambda_{1,\varepsilon}-\lambda_{1,0}>0$  we can pick 
$\eta=\min\left\{\frac{\lambda_{2,0}-\lambda_{1,0}}{2},\frac{Cc_2}{4}\right\}$. 

Lastly, we need to ensure that  ${W}_1\left(x,\frac{\gamma}{\varepsilon}\right)\leq{}e^{\varepsilon^{-1}\overline{\phi}\left(x,\frac{\gamma}{\varepsilon}\right)}$. We observe that, by the comparison theorem, $W_1(x,t)\leq{}e^{C_1-C_2|x|+(d_0-\lambda_{1,\varepsilon})t}$ where we recall $d_0$ from assumption (B1) and $C_1$ and $C_2$ from assumption (B3). Next because $a_{1,M}-\frac{c_1^2}{4}>a_{2,M}-\frac{c_2^2}{4}>\delta$ we have that $\overline{\phi}(x,t)$ is decreasing in $x$ for $x<-2R_0-L_1$ and increasing in $x$ for $x>2R_0+L_2$, hence we can pick a $K>C_1+\frac{2R_0}{C}$ large enough to ensure $\overline{W}\left(x,\frac{2R_0}{\varepsilon{}C}\right)>W_1\left(0,\frac{2R_0}{\varepsilon{}C}\right)$. The conclusion follows by applying the comparison theorem in \cref{subsec:Prelim0}.
\end{proof}

\begin{lemma}\label{lma:ConvergenceofTransform}
Under the same assumptions as in \cref{lma:LocalisedSuperSoln} there is a constant $\alpha_\varepsilon>0$ such that $\left\Vert{}W_1(x,t)-\alpha_\varepsilon{}p_{1,\varepsilon}(x)\right\Vert_{L^{\infty}(\mathbb{R})}\xrightarrow[t\rightarrow\infty]{}0$.
\end{lemma}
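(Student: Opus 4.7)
The plan is to apply the principal Floquet bundle theory of \cite{huska2008exponential} to equation \eqref{eqn:22}, exactly as in the proof of \cref{lma:ConvergenceToEigenvalue}. The only source of non-autonomy in \eqref{eqn:22} is the coefficient $a_2(x-\varepsilon C t)$ inside $\mathcal{L}_1$, and two observations show that it decouples from the large-time dynamics. First, by \ref{assum:B1} $a_2$ is compactly supported in $[-R_0,R_0]$ while $\varepsilon C>0$, so on any bounded $x$-interval this coefficient vanishes identically once $t$ is large enough. Second, on the moving window $[-R_0+\varepsilon C t,\,R_0+\varepsilon C t]$ where $a_2(x-\varepsilon Ct)$ actually acts, \cref{lma:EigenvectorDecay2} gives
\begin{equation*}
    p_{1,\varepsilon}(x)\;\leq\;\frac{K_1}{\varepsilon}\exp\!\bigl(-\overline{\kappa}_1(|x|-R_0)/\varepsilon\bigr)\;\leq\;\frac{K_1}{\varepsilon}\,e^{2R_0\overline{\kappa}_1/\varepsilon}\,e^{-\overline{\kappa}_1 C t},
\end{equation*}
so the product $a_2(x-\varepsilon Ct)\,p_{1,\varepsilon}(x)$ is $L^\infty_x$-exponentially small in $t$.

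The first step is to construct a positive entire solution $\Phi(x,t)$ of $\partial_t \Phi=\mathcal{L}_1\Phi$ which is asymptotic to a positive multiple of $p_{1,\varepsilon}$ as $t\to\infty$. Setting $\Phi=p_{1,\varepsilon}+\Psi$ and using \eqref{eqn:TwoPeaksDifferentDirectionsLinEigen} to kill the autonomous part of $\mathcal{L}_1 p_{1,\varepsilon}$, the correction $\Psi$ must satisfy
\begin{equation*}
    \partial_t\Psi-\mathcal{L}_1\Psi \;=\; a_2(x-\varepsilon Ct)\,p_{1,\varepsilon}(x),
\end{equation*}
whose right-hand side is exponentially small in $t$ by the second observation above. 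In the co-moving variable $y=x-\varepsilon c_1 t$ the operator $\partial_t-\mathcal{L}_1$ loses its drift, and standard Gaussian bounds on the associated parabolic Green's function combined with Duhamel's formula produce a bounded $\Psi$ with $\|\Psi(\cdot,t)\|_{L^\infty(\mathbb{R})}\to 0$ exponentially. The initial datum for $\Psi$ can be chosen to eliminate the (at worst one-dimensional) projection onto the principal Floquet bundle of the autonomous operator $\mathcal{L}_1^0 :=\mathcal{L}_1-a_2(x-\varepsilon C t)$, which is spanned by $p_{1,\varepsilon}$, ensuring genuine decay rather than convergence to a non-zero limit. Consequently $\Phi>0$ for $t$ large; extending backwards in time by the strong parabolic maximum principle yields an entire positive solution.

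Second, with $\Phi$ supplying hypothesis (C2) of Corollary 4 in \cite{huska2008exponential}, and the coefficients of $\mathcal{L}_1$ uniformly bounded by \ref{assum:B1}, the exponential separation property is satisfied. Applying that corollary to $W_1$, exactly as in the proof of \cref{lma:ConvergenceToEigenvalue}, yields a constant $\alpha_\varepsilon\geq 0$ such that
\begin{equation*}
    \frac{\|W_1(\cdot,t)-\alpha_\varepsilon\Phi(\cdot,t)\|_{L^\infty(\mathbb{R})}}{\|\Phi(\cdot,t)\|_{L^\infty(\mathbb{R})}}\;\xrightarrow[t\to\infty]{}\;0.
\end{equation*}
Since $\Phi(\cdot,t)\to p_{1,\varepsilon}$ in $L^\infty(\mathbb{R})$ and $\|p_{1,\varepsilon}\|_{L^\infty(\mathbb{R})}$ is a finite positive constant, the desired conclusion $\|W_1(\cdot,t)-\alpha_\varepsilon p_{1,\varepsilon}\|_{L^\infty(\mathbb{R})}\to 0$ follows after absorbing any rescaling into $\alpha_\varepsilon$. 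Strict positivity $\alpha_\varepsilon>0$ is a consequence of $n_0\not\equiv 0$, the strong parabolic maximum principle applied to $W_1$, and the dominance hypothesis $a_{1,M}-c_1^2/4>a_{2,M}-c_2^2/4>\delta$ under which \cref{lma:LocalisedSuperSoln} is invoked.

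The hard part will be the Duhamel step that produces $\Psi$. The drift in $\mathcal{L}_1$ carries mass leftwards at speed $\varepsilon|c_1|$ while the forcing sits in a window moving rightwards at speed $\varepsilon C$, so in the co-moving frame the Gaussian kernel and the forcing are separated, which aids absolute convergence of the Duhamel integral. Even so, one must balance three competing exponentials, namely the Gaussian width $\varepsilon\sqrt{t}$, the $\varepsilon Ct$ separation between kernel and forcing, and the $\varepsilon$-dependent prefactor $(K_1/\varepsilon)e^{2R_0\overline{\kappa}_1/\varepsilon}$ coming from \cref{lma:EigenvectorDecay2}, to secure $\|\Psi(\cdot,t)\|_{L^\infty(\mathbb{R})}\to 0$ rather than mere boundedness; the removal of the principal Floquet component from the initial datum is what turns potential $O(1)$ accumulation of the forcing into actual decay.
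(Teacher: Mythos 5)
Your route is genuinely different from the paper's, and it has real gaps. The paper never solves the non-autonomous problem \eqref{eqn:22} directly: it uses \cref{lma:LocalisedSuperSoln} to show that $W_1$ itself (not $p_{1,\varepsilon}$) is exponentially small on the moving support of $a_2(x-\varepsilon Ct)$, so that the shifted solution $U=W_1(\cdot,\cdot+T+\tfrac{\gamma}{\varepsilon})e^{-K/\varepsilon}$ is sandwiched between a solution $\tilde U$ of the \emph{autonomous} single-peak problem and $\tilde U$ multiplied by $e^{a_{2,M}\eta^{-1}e^{-\eta T}}\to 1$; the Floquet machinery is only ever invoked for the autonomous operator already handled in Case 1, and $\alpha_\varepsilon$ is then extracted from the two-sided bound \eqref{eqn:Tbound} by a compactness/contradiction argument. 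You instead propose to build a positive solution $\Phi=p_{1,\varepsilon}+\Psi$ of the full time-dependent equation and feed it to \cref{cor:FloquetBundle} as the hypothesis-(C2) solution, making no use of \cref{lma:LocalisedSuperSoln} at all. If the construction of $\Phi$ worked, this would indeed be a clean alternative, but the construction is precisely where the proof lives and it is not carried out.

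Concretely: (i) ``extending backwards in time by the strong parabolic maximum principle'' is not a valid operation — the backward parabolic problem is ill-posed and the maximum principle does not produce backward extensions; and even granting that (C2) only needs forward solutions from each $s_0$, \cref{cor:FloquetBundle} then gives convergence of $W_1$ to a multiple of the \emph{principal Floquet solution} $\phi$, not of your $\Phi$, so an additional application of the corollary to $\Phi$ itself is needed to identify the asymptotics of $\phi$ with $p_{1,\varepsilon}$. (ii) Positivity of $\Phi=p_{1,\varepsilon}+\Psi$ does not follow from $\Vert\Psi(\cdot,t)\Vert_{L^\infty}\to 0$, since $\inf_{x}p_{1,\varepsilon}=0$; you would need pointwise control of $\Psi$ by $p_{1,\varepsilon}$ at spatial infinity, which is delicate because the forcing $a_2(x-\varepsilon Ct)p_{1,\varepsilon}$ is supported exactly where $p_{1,\varepsilon}$ is exponentially small. (iii) The Duhamel step you flag as hard is genuinely the crux: the autonomous evolution has principal eigenvalue $0$, so the accumulated forcing does not decay without projecting off the principal component, and that projection requires the adjoint Floquet solution and a justified improper integral $\int_t^{\infty}$; moreover your own estimate $(K_1/\varepsilon)e^{2R_0\overline{\kappa}_1/\varepsilon}e^{-\overline{\kappa}_1 Ct}$ is only small for $t\gtrsim R_0/(\varepsilon C)$, so, like the paper, you cannot start before $t\sim\gamma/\varepsilon$ in any case. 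As written, the proposal identifies the right mechanism (the $a_2$ term decouples at large times) but replaces the paper's elementary sandwich with an unexecuted construction whose key analytic steps are missing or, in the backward-extension claim, incorrect.
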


\begin{proof}
    We consider $U(x,t)=W_1(x,t+T+\frac{\gamma}{\varepsilon})e^{-\frac{K}{\varepsilon}}$ which solves 
    \begin{equation}\label{eqn:29}
\begin{cases}
    \partial_{t}U-\mathcal{L}U-a_2(x-\varepsilon{}Ct-\gamma{}C-\varepsilon{}T)U=0~~(x,t)\in\mathbb{R}\times\left(0,\infty\right), \\
    U\left(x,0\right)=W_1\left(x+T,\frac{\gamma}{\varepsilon}\right)e^{-\frac{K}{\varepsilon}},~~x\in\mathbb{R},
\end{cases}
\end{equation}
where now $\mathcal{L}u:=-\varepsilon{}c_1\partial_{x}u-\varepsilon^{2}\partial_{xx}u-(a_1(x)+\lambda_{1,\varepsilon})$.
    We note that $0<a_2(x-\varepsilon{}Ct-\gamma{}C-\varepsilon{}T)U\leq{}a_{2,M}e^{-\eta{t}-\eta{}T}$ due to \cref{lma:LocalisedSuperSoln}. Therefore, we have that $\tilde{U}(x,t)\leq{}U(x,t)\leq{}\tilde{U}(x,t)e^{a_{2,M}e^{-\eta{}T}\int_{0}^{t}e^{-\eta{s}}ds}$ for all $(x,t)\in{}\mathbb{R}\times\left(\frac{\gamma}{\varepsilon},\infty\right)$ where $\tilde{U}(x,t)$ solves
        \begin{equation}\label{eqn:30}
\begin{cases}
    \partial_{t}\tilde{U}-\mathcal{L}\tilde{U}=0~~(x,t)\in\mathbb{R}\times\left(0,\infty\right), \\
    U\left(x,0\right)=W_1\left(x,T+\frac{\gamma}{\varepsilon}\right)e^{-\frac{K}{\varepsilon}},~~x\in\mathbb{R},
\end{cases}
\end{equation}
but this is exactly the eigenvalue problem in the case of a single shifting peak, and we therefore know that $\tilde{U}(x,t)=\alpha_{\varepsilon,T}p_{1,\varepsilon}(x,t)+\tilde{\Sigma}(x,t)$ where $\tilde{\Sigma}(x,t)$ decays exponentially. The constant $\alpha_{T,\varepsilon}$ is positive and depends only on the initial condition (or, effectively, $T$) and $\varepsilon$. 

This shows that
\begin{equation}\label{eqn:Tbound}
    \alpha_{\varepsilon,T}p_{1,\varepsilon}(x)-\Sigma_{T}(x,t)\leq{}W_1(x,t)\leq{}e^{a_{2,M}\eta^{-1}e^{-\eta{}T}}\left(\alpha_{\varepsilon,T}p_{1,\varepsilon}(x)+\Sigma_{T}(x,t)\right),
\end{equation}
for $(x,t)\in\mathbb{R}\times{}[T+\frac{\gamma}{\varepsilon},\infty)$.

We first claim that $\alpha_{\varepsilon,T}$ can be bounded above independently of $T$.  To show that it is bounded, we take $T=0$ on the right hand side of \eqref{eqn:Tbound}, and arbitrary $T$ on the left. Sending $t\rightarrow\infty$ then implies that $\alpha_{\varepsilon,T}p_{1,\varepsilon}(x)\leq{}K_1\alpha_{\varepsilon,0}p_{1,\varepsilon}(x)$ where $K_1$ is independent of $\varepsilon$ and $T$. Thus $\alpha_{\varepsilon,T}\leq{}K_1\alpha_{0,\varepsilon}$. This enables us also to find a subsequence $\alpha_{\varepsilon,T_j}$ such that $\alpha_{\varepsilon,T_j}\xrightarrow[j\rightarrow\infty]{}\alpha_{\varepsilon,\infty}$.

We can now argue by contradiction to show the lemma. We suppose there is no constant $\alpha_\varepsilon$ such that $\left\Vert{}W_1(x,t)-\alpha_\varepsilon{}p_{1,\varepsilon}(x)\right\Vert_{L^{\infty}(\mathbb{R})}\xrightarrow[t\rightarrow\infty]{}0$. In particular, there exists a constant $\nu>0$ and sequence of times $t_i$ and points $x_i$ such that $|W_1(x_i,t_i)-\alpha_{\varepsilon,\infty}p_{1,\varepsilon}(x_i)|>\nu$. We can further assume $x_i$ are bounded, since $W_1(x,t)$ decays exponentially (from \cref{lma:EigenvectorDecay2,lma:SuperSoln}) and otherwise we would have a contradiction with the preceeding inequality. We can now pick a convergent subsequence (relabelling) $x_i\xrightarrow[i\rightarrow\infty]{}z$ so that, because $W_1(x,t)$ is locally uniformly bounded, $W_1(x_i,t_i)\xrightarrow[i\rightarrow\infty]{}w_1$ and $|w_1-\alpha_{\varepsilon,\infty}p_{1,\varepsilon}(z)|>\nu$. Without loss of generality, we let $\{T_j\}_{j}$ to be equal to $\{t_i\}_{i}$. Taking $x=x_j$, $t=t_i$, $T=t_i$ and sending $i\rightarrow\infty$ in \eqref{eqn:Tbound} yields $w_1-\alpha_{\varepsilon,\infty}p_{1,\varepsilon}(z)=0$, which contradicts $|w_1-\alpha_{\varepsilon,\infty}p_{1,\varepsilon}(z)|>\nu>0$. 

\end{proof}

We note that we have not shown an exponential decay in time for $\Sigma(x,t)=W_1(x,t)-\alpha_{\varepsilon,\infty}p_{1,\varepsilon}(x)$, unlike the situation in the proof of \cref{lma:ConvergenceToEigenvalue} and in \cite{figueroa2018long,iglesias2021selection}. Nevertheless, we get immediately equivalent results to \cref{lma:ConvergenceToEigenvalue,lma:rhoConvergence} with no modifications to the proof.

\begin{lemma}\label{lma:ConvergenceToEigenvalueTwoPeak}   Under the assumptions $a_{1,M}-\frac{c_1^2}{4}>a_{2,M}-\frac{c_2^2}{4}>\delta$, $\varepsilon$ is small enough, and \ref{assum:B1}-\ref{assum:B4} the normalized population will converge to  $p_\varepsilon$ as $t\rightarrow\infty$, i.e.
         \[\left\Vert{\frac{N_1(x,t)}{\rho_\varepsilon(t)}-{p}_{1}(x)}\right\Vert_{L_{\infty}(\mathbb{R})}\xrightarrow[t\rightarrow\infty]{}0.\]
\end{lemma}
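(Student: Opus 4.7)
The plan is to mirror the proof of \cref{lma:ConvergenceToEigenvalue} using \cref{lma:ConvergenceofTransform} in place of the exponential separation argument that was available in the single-peak setting. Specifically, write
\[W_1(x,t)=\alpha_\varepsilon p_{1,\varepsilon}(x)+\Sigma_1(x,t),\qquad \Vert\Sigma_1(\cdot,t)\Vert_{L^\infty(\mathbb{R})}\xrightarrow[t\to\infty]{}0,\]
which is exactly the content of \cref{lma:ConvergenceofTransform}. Because $W_1(x,t)=N_1(x,t)\,e^{\int_0^t\rho_\varepsilon(s)\,ds+\lambda_{1,\varepsilon}t}$, integrating in $x$ and forming the ratio cancels the exponential factor and yields
\[\frac{N_1(x,t)}{\rho_\varepsilon(t)}=\frac{\alpha_\varepsilon p_{1,\varepsilon}(x)+\Sigma_1(x,t)}{\alpha_\varepsilon+\displaystyle\int_{\mathbb{R}}\Sigma_1(y,t)\,dy},\]
since $\int_{\mathbb{R}}p_{1,\varepsilon}=1$. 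The $L^\infty$ convergence of the numerator to $\alpha_\varepsilon p_{1,\varepsilon}(x)$ is immediate from the bound on $\Sigma_1$. Thus everything reduces to showing that the scalar correction in the denominator tends to zero, i.e.\ $\int_{\mathbb{R}}\Sigma_1(y,t)\,dy\to 0$ as $t\to\infty$.

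To obtain this I would apply dominated convergence, which requires a time-uniform integrable majorant for $|\Sigma_1(\cdot,t)|$. Since $\alpha_\varepsilon p_{1,\varepsilon}$ is already integrable by the decay estimate of \cref{lma:EigenvectorDecay2}, it suffices to bound $W_1(x,t)$ by a fixed integrable function $\overline{W}_\varepsilon(x)$. The construction is essentially that of \cref{lma:SuperSoln}, adapted to the two-peak equation \eqref{eqn:22}: inside the moving window $[-R_0+\varepsilon Ct,R_0+\varepsilon Ct]$ that carries the support of $a_2(x-\varepsilon Ct)$, I would use \cref{lma:LocalisedSuperSoln} to get a uniform (in fact exponentially decaying in $t$) bound $W_1\le e^{K/\varepsilon-(\eta-\gamma/\varepsilon)t}$; outside this window $a_2(x-\varepsilon Ct)$ vanishes and the equation is the same as in Case~1, so the exponentially decaying majorant $e^{C_1-C_2|x|+(a_{1,M}+\lambda_{1,\varepsilon})t}$ used in \cref{lma:SuperSoln} still applies. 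Splicing these estimates on the parabolic boundary of $(\mathbb{R}\setminus(-R_0+\varepsilon Ct,R_0+\varepsilon Ct))\times(T_1,\infty)$ via the comparison principle in \cref{subsec:Prelim0} yields a global, $t$-uniform integrable envelope $\overline{W}_\varepsilon(x)$ with $|\Sigma_1(x,t)|\le\overline{W}_\varepsilon(x)+\alpha_\varepsilon p_{1,\varepsilon}(x)$.

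The main obstacle is precisely this supersolution construction, because unlike Case~1 we do not have an exponential decay rate in time for $\Sigma_1$, only an $L^\infty$ vanishing from \cref{lma:ConvergenceofTransform}; hence every step must go through dominated convergence rather than through an explicit rate. Once the envelope $\overline{W}_\varepsilon$ is in hand the pointwise limit $\Sigma_1(x,t)\to 0$ from \cref{lma:ConvergenceofTransform} combined with the dominated convergence theorem delivers $\int_{\mathbb{R}}\Sigma_1(y,t)\,dy\to 0$, and substituting back into the ratio gives the claimed $L^\infty(\mathbb{R})$ convergence of $N_1/\rho_\varepsilon$ to $p_{1,\varepsilon}$, completing the proof.
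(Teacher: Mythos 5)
Your proposal is correct and follows essentially the same route the paper takes: the paper simply remarks that \cref{lma:ConvergenceToEigenvalueTwoPeak} follows from the proof of \cref{lma:ConvergenceToEigenvalue} "with no modifications," i.e.\ substitute the decomposition $W_1=\alpha_\varepsilon p_{1,\varepsilon}+\Sigma_1$ from \cref{lma:ConvergenceofTransform} for the exponential-separation step, form the ratio, and kill the denominator correction by dominated convergence with an integrable time-uniform envelope. Your only added content is spelling out how to splice the envelope from \cref{lma:LocalisedSuperSoln} (inside the moving support of $a_2$) with the Case-1 barrier of \cref{lma:SuperSoln} (outside it), which is exactly the adaptation the paper leaves implicit.
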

\begin{lemma}\label{lma:rhoConvergenceTwoPeak} Under the assumptions $a_{1,M}-\frac{c_1^2}{4}>a_{2,M}-\frac{c_2^2}{4}>\delta$, $\varepsilon$ is small enough, and \ref{assum:B1}-\ref{assum:B4} the total population $\rho_\varepsilon(t)$ will convergence to a finite, positive value
    \[\rho_\varepsilon(t)\xrightarrow[t\rightarrow\infty]{}\int_{}a{p}_{1}dy.\]
\end{lemma}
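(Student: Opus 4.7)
The plan is to follow the strategy of \cref{lma:rhoConvergence} essentially verbatim, deriving an ODE of the form
\[
\frac{d\rho_\varepsilon}{dt} = \rho_\varepsilon\bigl(L + \Sigma_2(t) - \rho_\varepsilon\bigr),
\qquad L = \int_{\mathbb{R}} (a_1(y) - \delta)\, p_{1,\varepsilon}(y)\,dy,
\]
with $\Sigma_2(t) \to 0$, and then applying Gronwall-type comparisons to obtain matching bounds on $\limsup \rho_\varepsilon$ and $\liminf \rho_\varepsilon$. The role of $a(x)$ from the single-peak case is played by the full growth rate $a_1(\cdot - \varepsilon c_1 t) + a_2(\cdot - \varepsilon c_2 t) - \delta$, and the contribution of the receding second peak will be shown to be negligible using \cref{lma:LocalisedSuperSoln}.

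First I would integrate \eqref{eqn:TwoPeaksDifferentDirections} over $\mathbb{R}$ and change variables to the moving frame $x = y - \varepsilon c_1 t$, so that integrals become integrals against $N_1(x,t)$. Using \cref{lma:ConvergenceofTransform} in the form $W_1(x,t) = \alpha_\varepsilon p_{1,\varepsilon}(x) + \Sigma(x,t)$ with $\|\Sigma(\cdot,t)\|_{L^\infty(\mathbb{R})} \to 0$, together with the compact support of $a_1$ from \ref{assum:B1} and the $L^1$ integrability of $p_{1,\varepsilon}$ from \cref{lma:EigenvectorDecay2}, the first term converges: $\int a_1 N_1\,dx / \rho_\varepsilon \to \int a_1 p_{1,\varepsilon}\,dx$. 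For the second term, \cref{lma:LocalisedSuperSoln} gives an exponential-in-$t$ bound on $W_1$ over the moving strip $[-R_0 + \varepsilon C t,\, R_0 + \varepsilon C t]$, which is precisely the support of $a_2(\cdot - \varepsilon C t)$, so $\int a_2(x - \varepsilon C t) N_1 \,dx / \rho_\varepsilon \to 0$. The resulting ODE has the claimed form.

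Next I would establish the upper bound $\limsup \rho_\varepsilon \leq L$ exactly as in \cref{lma:rhoConvergence}: for any $\sigma > 0$, pick $T$ with $|\Sigma_2(t)| \leq \sigma$ for $t \geq T$; if $\rho_\varepsilon$ exceeds $L + 2\sigma$ at some $t_1 > T$, the inequality $\rho_\varepsilon' \leq -\sigma\rho_\varepsilon$ forces it back below $L + 2\sigma$ in finite time and prevents further excursions above that level. For the lower bound, the strong maximum principle applied to \eqref{eqn:TwoPeaksDifferentDirections} guarantees $\rho_\varepsilon(t) > 0$ for every $t > 0$; for $\sigma \in (0, L)$ and the same $T$, comparing $\rho_\varepsilon$ with the logistic ODE $y' = y(L - \sigma - y)$, $y(T) = \rho_\varepsilon(T) > 0$, yields $\liminf \rho_\varepsilon \geq L - \sigma$. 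Sending $\sigma \to 0$ in both bounds concludes.

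The main technical obstacle is that, unlike in the single-peak case, the convergence $\Sigma(x,t) \to 0$ is deduced in \cref{lma:ConvergenceofTransform} by a non-quantitative compactness-and-contradiction argument, so $\Sigma_2(t)$ is not known to decay exponentially. Consequently $\int_0^\infty |\Sigma_2(s)|\,ds$ may be infinite, and the Gronwall step of the original lower-bound argument in \cref{lma:rhoConvergence} (through the auxiliary function $\hat{\rho}_\varepsilon = \rho_\varepsilon e^{-\int_0^t \Sigma_2}$) is not directly available. The one-sided logistic comparison described above replaces that step and requires only $\Sigma_2 \to 0$ together with the strict positivity of $\rho_\varepsilon$ on bounded time intervals supplied by the maximum principle.
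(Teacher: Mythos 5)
Your proposal is correct, and it follows the paper's intended route (integrate the equation in the moving frame, isolate a vanishing error term $\Sigma_2$, then run the two-sided Gronwall argument of \cref{lma:rhoConvergence}) except in one place where you deliberately — and rightly — deviate. The paper's official justification of this lemma is the single remark that the results follow from \cref{lma:rhoConvergence} ``with no modifications to the proof,'' immediately after conceding that the exponential decay of $\Sigma(x,t)=W_1(x,t)-\alpha_{\varepsilon,\infty}p_{1,\varepsilon}(x)$ has \emph{not} been established in the two-peak case. As you observe, the lower-bound step of \cref{lma:rhoConvergence} genuinely uses more than $\Sigma_2(t)\to 0$: the auxiliary function $\hat{\rho}_\varepsilon=\rho_\varepsilon e^{-\int_0^t\Sigma_2}$ is only useful because $e^{\int_0^t\Sigma_2}$ is bounded by $e^{\int_0^\infty|\Sigma_2|}$, i.e.\ because $\Sigma_2\in L^1(0,\infty)$, which in the single-peak case comes precisely from the exponential decay. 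So ``no modifications'' is not literally true, and your replacement of that step by a one-sided comparison with the autonomous logistic equation $y'=y(L-\sigma-y)$, $y(T)=\rho_\varepsilon(T)>0$, is exactly the right fix: it needs only $|\Sigma_2|\le\sigma$ for $t\ge T$ together with strict positivity of $\rho_\varepsilon(T)$ from the strong maximum principle, and it delivers $\liminf\rho_\varepsilon\ge L-\sigma$ for every $\sigma>0$. The upper bound and the treatment of the $a_2$ term via \cref{lma:LocalisedSuperSoln} are as in the paper. The one point you should spell out slightly more is the denominator in $N_1/\rho_\varepsilon=W_1/\int W_1$: to get $\int_{\mathbb{R}}W_1(y,t)\,dy\to\alpha_{\varepsilon,\infty}>0$ from the $L^\infty$ convergence of \cref{lma:ConvergenceofTransform} you need an integrable, time-independent dominating function for $W_1$ (the two-peak analogue of $\overline{W}_\varepsilon$ from \cref{lma:SuperSoln}) so that dominated convergence applies; the construction carries over, but it is an ingredient your write-up uses implicitly rather than cites.
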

\begin{proof}[Proof of \cref{thm:TwoPeaksMainResult}]
 \cref{thm:TwoPeaksMainResult} now follows by combining the preceding two lemmas with  \cref{thm:EigenvalueConvergence} for the case of a single peak. In this case, there is a unique viscosity solution. In the case that $a_{2,M}-\frac{c_2^2}{4}<\delta$ the proof only requires minor modifications.
   
\end{proof}

\section{Numerical computations}\label{sec:NumericalResults}

To complement theoretical results, we present several numerical examples which illuminates interesting features of the transient dynamics, as well as demonstrating our conclusions for the long-time behaviour.

\subsection{Description of methods}

To obtain the numerical results, we make use of a simple finite difference scheme. We discretize space, which we take as $[0,L]$ as $x_{i}=\delta{x}i$ for $i=1,...N_{x}$ where $x_{N_{x}}=L$. We discretize the time interval $[0,T]$ as $t_{i}=\delta{t}i$ for $i=1,...,N_{t}$ where $\delta{t}N_{t}=T$. Iterations are computed using the forward in time Euler scheme with a centre difference approximation of the Laplacian:
\[n^{k+1}_{j}=\begin{cases}
    n^{k}_{j}+\frac{\delta{t}}{\delta{x}^2}(n^{k}_{j+1}-2n^{k}_{j}+n^{k}_{j-1})+\delta{t}n^{k}_{j}a(x_{j},t_{k+1}) &\text{ for } j=2,...,N_{x}-1,k=1,...,N_{t}\\
    0 &\text{ for } j=1,N_{x},k=1,...,N_{t}\\
    
\end{cases}\]
Although we're seeking to approximate a solution on an unbounded domain, we apply Dirichlet boundary conditions rather than, for instance, truncating the spatial domain at each time point, because as the solution is expected to concentrate we suppose that the boundary conditions ultimately do not significantly impact the final solution.

We also make use of an adaptation of the asymptotic preserving scheme given in \cite{calvez2022concentration}. For a detailed description, we refer to their paper but will summarise the key points. This scheme works with the Hamilton-Jacobi equation 
\begin{equation} \label{eqn:RewriteHJB}
\begin{cases*}
\partial_{\tau}\bar{u}_{\varepsilon}+\vert\partial_{x}\bar{u}_\varepsilon-\frac{c}{2}\vert^2=\varepsilon\partial_{xx}\bar{u}_\varepsilon-\left(a(x)-\frac{c^2}{4}-\rho_\varepsilon(t)\right)\\
\rho_\varepsilon(t)=\int{}e^{-\frac{\bar{u}_\varepsilon}{\varepsilon}}dx
    \end{cases*}
\end{equation}
so that $N=e^{\frac{-\bar{u}_\varepsilon}{\varepsilon}}$ solves \eqref{eqn:SameVelocityShifted}.

Where $\bar{u}^{0}$ is specified, the iterations are then given by
\begin{equation}\label{eqn:APSeps}
  \begin{cases*}
\frac{\bar{u}^{n+1}_{i}-\bar{u}^{n}_{i}}{\Delta{t}}+H\left(\frac{\bar{u}^{n}_{i}-\bar{u}^{n}_{i-1}}{\Delta{}x}-\frac{c}{2},\frac{\bar{u}^{n}_{i+1}-\bar{u}^{n}_{i}}{\Delta{}x}-\frac{c}{2}\right)=\varepsilon\frac{\bar{u}^{n}_{i+1}-2\bar{u}^{n}_{i}+\bar{u}^{n}_{i-1}}{\Delta{x}}-\left(a(x_{i})-\frac{c^2}{4}-\rho_{n}\right)\\
\rho_{n}=\Delta{x}\sum_{i\in\mathbb{Z}}e^{-\frac{\bar{u}^{n}_{i}}{\varepsilon}}.
    \end{cases*}  
\end{equation}
Here 
\[H(p,q)=\max{}\{H^{+}(p),H^{-}(q)\}\]
where
\[H^{+}(p)=
\begin{cases*}
    p^2 \text{ if } p>0,\\
    0 \text{ if } p<0.
\end{cases*}\]
and
\[H^{-}(q)=
\begin{cases*}
    0 \text{ if } q>0,\\
    q^2 \text{ if } q<0.
\end{cases*}\]
When $c=0$ this is exactly the scheme $S_\varepsilon$ in \cite{calvez2022concentration}. We note that although the actual schemes are almost identical, we have quite different assumptions on the growth term, which is for us given by $R(x,I)=a(x)-\frac{c^2}{4}-I$. In particular, for $I=0$ this is negative for $|x|>R_{0}$ but in \cite{calvez2022concentration} there exists an $I_{m}>0$ such that $R(x,I_{m})>0$ for all $x$. We find that the scheme converges to what is expected but to prove the convergence is beyond the scope of the current paper and we leave it for future work.

Associated with the above scheme is also a limit scheme 
\begin{equation}\label{eqn:APSLimiting}
\begin{cases*}
\frac{v^{n+1}_{i}-v^{n}_{i}}{\Delta{t}}+H\left(\frac{v^{n}_{i}-v^{n}_{i-1}}{\Delta{}x}-\frac{c}{2},\frac{v^{n}_{i+1}-v^{n}_{i}}{\Delta{}x}-\frac{c}{2}\right)=-\left(a(x_{i})-\frac{c^2}{4}-P_{n}\right)\\
\min_{i\in\mathbb{Z}}v_{i}^{n+1}=0.
\end{cases*}   
\end{equation}
Here $P_{n+1}$ is the limiting value of $\rho_{n+1}$ as $\varepsilon\rightarrow{0}$ which is unique according to \cite{calvez2022concentration}. This is computed by finding the root of the following function
\[J{}\mapsto\min_{i\in\mathbb{Z}}\left\{v^{n+1}_{i}-\Delta{t}H\left(\frac{v^{n}_{i}-v^{n}_{i-1}}{\Delta{}x}-\frac{c}{2},\frac{v^{n}_{i+1}-v^{n}_{i}}{\Delta{}x}-\frac{c}{2}\right)-\Delta{t}R(x_{i},J)\right\}.\]
This function's unique root is $P_{n+1}$ according to Remark 4.3 in \cite{calvez2022concentration}.

\subsection{Results}

The following numerical simulations provide additional support for our conclusions about the long-term behaviour of solutions. Moreover, they also provide insights into the transient behaviour which is not captured by the theorems in \cref{sec:MainResults}.

\begin{figure}[H]
    \centering
    \includegraphics[width=0.7\textwidth]{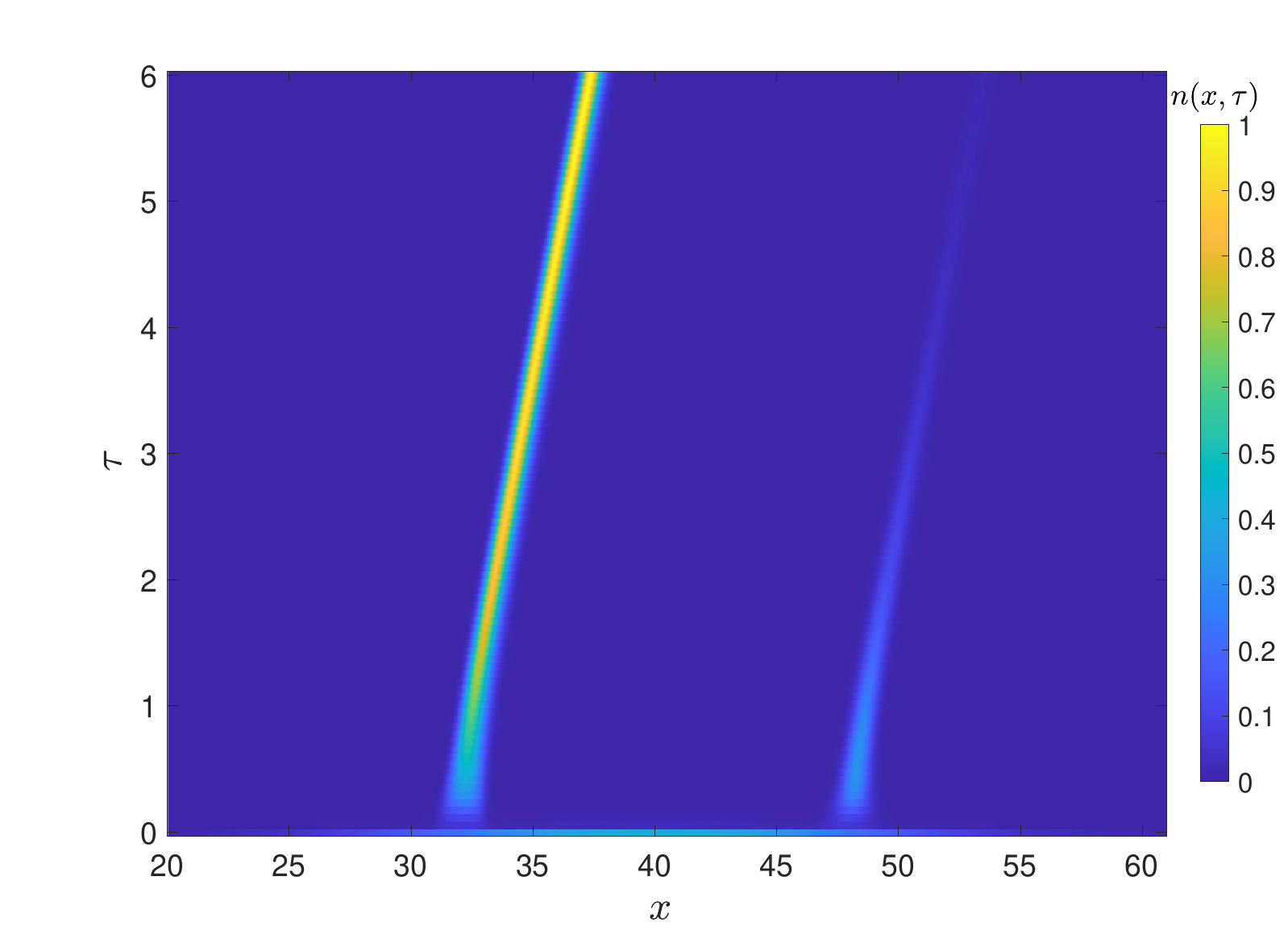}
    \caption{Dynamics of $n(x,\tau)$ on a fixed space and time interval. The initial condition is taken as a $n_{0}(x)=\frac{1}{10}e^{-\frac{(x-37.5)^2}{10^2}}$.  We choose $\delta=-\frac{1}{2}$, $a_{1}(x)=(\frac{5}{2}-(x-35)^4)^{+}$, $a_{2}=\left(\frac{5}{2}-(x-40)^2\right)^{+}$, $\varepsilon=0.1$ and $c_{1}=c_{2}=1$.}
    \label{fig:Theorem1}
\end{figure}

 In \cref{fig:Theorem1} we plot the solution $n(x,\frac{t}{\varepsilon})$. We see that the solution does in fact concentrate, as proved in Theorem 2, on the lagged optimum following the global optimum which satisfies $x_{i}=\text{argmin}_{z\in\{x_{1},x_{2}\}}|a_{i}''(z)|$. This is not captured by \cref{thm:RescaleConvergence} although it is suggested by it.
 
We can compare results using the other numerical scheme too.  In \cref{fig:APS}, we plot $\bar{u}$ which is the numerical solution approximating $u_\varepsilon=-\varepsilon\log{N}$ at four time points. Since the solution is time-dependent, there is a time-dependent minimum $x(T)$ where $u(x_{T},T)=0$. We find that, as expected, $x(T)$ approaches the $\bar{x}_{1}$ both for the $\varepsilon>0$ scheme and the limit scheme.

Moreover, the results show that the solution initially concentrates near both peaks, even though ultimately one dies out. This suggests that both subpopulations (following $x_{1}$ and $x_{2}$) will coexist for some significant time (recall the time units are $\tau=\frac{t}{\varepsilon}$).

We can also compare this to results for an asymptotic scheme which we adapt from \cite{calvez2022concentration} in \cref{fig:APS}. We observe that the limiting scheme and the $\varepsilon>0$ scheme both reproduce the observation of the finite differences method where the solution concentrates only at the lagged optimum $\bar{x}_{i}$ associated to peak with minimal $|a''(x_{i})|$.
\begin{figure}[H]
    \centering
    \includegraphics[width=0.8\textwidth]{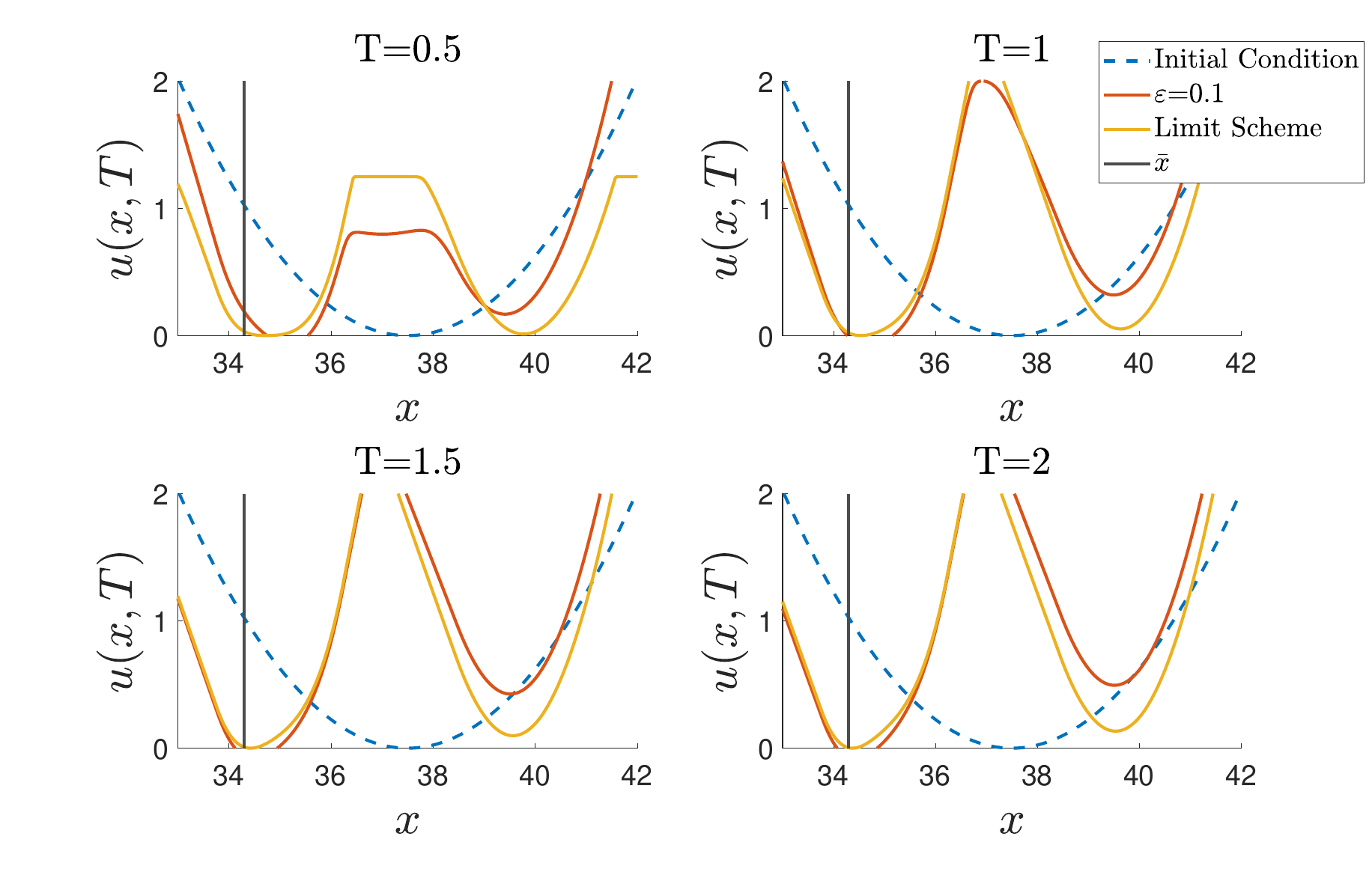}
    \caption{The solution $u(x,T)$ to \eqref{eqn:APSeps} and \eqref{eqn:APSLimiting} for $T\in\{\frac{1}{2},1,\frac{3}{2},2\}$. The initial condition is taken as $u_{\varepsilon}(x,0)=-\varepsilon\left(\log(\frac{1}{10})-\frac{(x-37.5)^2}{10^2}\right)$.  We choose $\delta=-\frac{1}{2}$, $a_{1}(x)=(\frac{5}{2}-(x-35)^4)^{+}$, $a_{2}=\left(\frac{5}{2}-(x-40)^2\right)^{+}$, $\varepsilon=0.1$ and $c_{1}=c_{2}=1$.}
    \label{fig:APS}
\end{figure}

Next, we investigated the effect of increasing the speed $c_{2}$ while leaving $c_{1}$ fixed. As shown in in \cref{fig:SpeedVariation}, when $c_{2}$ is small, $a_{1,M}-\frac{c_{1}^2}{4}<a_{2,M}-\frac{c_{2}^2}{4}$ and so we expect the solution to concentrate on $\bar{x}_{2}+\varepsilon{}c_{2}t$ in the long-term, as required by \cref{thm:TwoPeaksMainResult}. However when $c_{2}$ is large enough we will instead have $a_{1,M}-\frac{c_{1}^2}{4}>\max\left\{\delta,a_{2,M}-\frac{c_{2}^2}{4}\right\}$ and we expect concentration at $\bar{x}_{1}+\varepsilon{}c_{1}t$ in the long term. Indeed, this occurs, and we also see that for $c_{2}=2.5$ initially the subpopulation following $\bar{x}_{2}+\varepsilon{}c_{2}t$ grows and the subpopulation following $\bar{x}_{1}+\varepsilon{}c_{1}t$ decays. We expect that this is due to the fact that there is initially mass near the true optimum at $x_{2}$ and $a_{2,M}>a_{1,M}$. This allows an initially higher growth rate near ${x}_{2}+\varepsilon{}c_{2}t$ for small times. The competition then suppresses the growth everywhere else. However, due to the shift, the population near ${x}_{2}+\varepsilon{}c_{2}t$ cannot be sustained and starts to lag, allowing the population near $\bar{x}_{1}+\varepsilon{}c_{1}t$, which has the higher lagged fitness, to overtake.

\begin{figure}[H]
    \centering
    \includegraphics[width=0.8\textwidth]{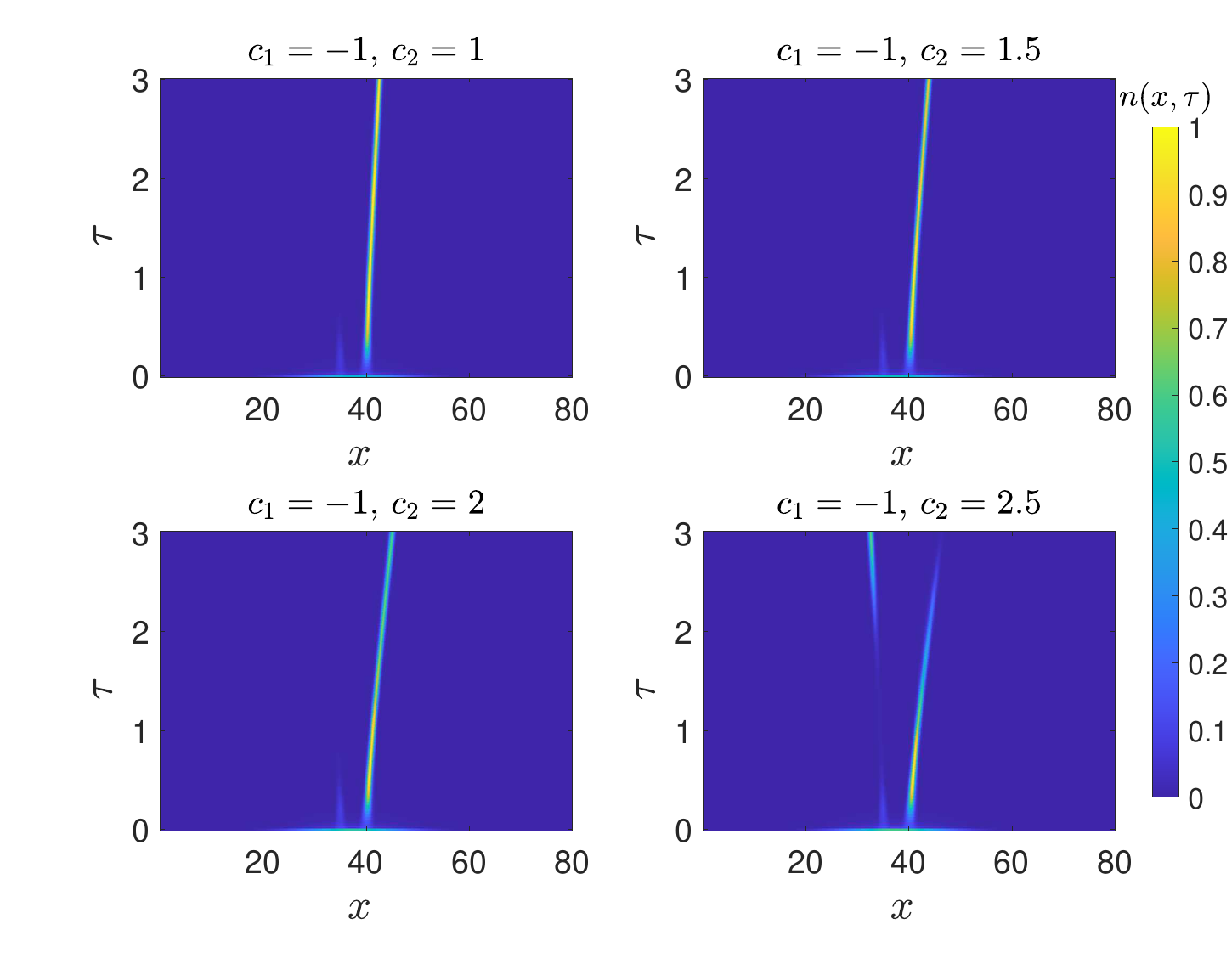}
    \caption{ We choose $\delta=-\frac{1}{2}$, $a_{1}(x)=(\frac{7}{4}-(x-32)^2)^{+}$, $a_{2}=\left(\frac{5}{2}-(x-48)^2\right)^{+}$, $\varepsilon=0.1$ and $c_{1}=-1$ and $c_{2}$ to vary as in the above plots.}
    \label{fig:SpeedVariation}
\end{figure}

We also looked at the effect of initial conditions on the transient behaviour. We keep the parameters fixed and find that the initial conditions can alter where the solution initially concentrates. In particular, for nearby initial conditions, it will concentrate on some point which is likely the single lagged optimum of $a_{1}(x-\varepsilon{}c_{1}t)+a_{2}(x-\varepsilon{}c_{2}t)$ before the sufficiently separate and it instead follows the maximum of the positive lagged optima.

Finally, we determine the behaviour of $\rho_\varepsilon(t)=\int_{\mathbb{R}}n(x,t)dx$, or the total population in \cref{fig:FitnessTrajectory} for an example initial condition where the two peaks overlap. In this case, $\rho_\varepsilon(t)$ is non-linear but eventually monotonic. If this property could be established rigorously, it would simplify some of the proofs presented here, in particular \cref{thm:TwoPeaksMainResult}. 
\begin{figure}[H]
    \centering
    \includegraphics[width=0.8\textwidth]{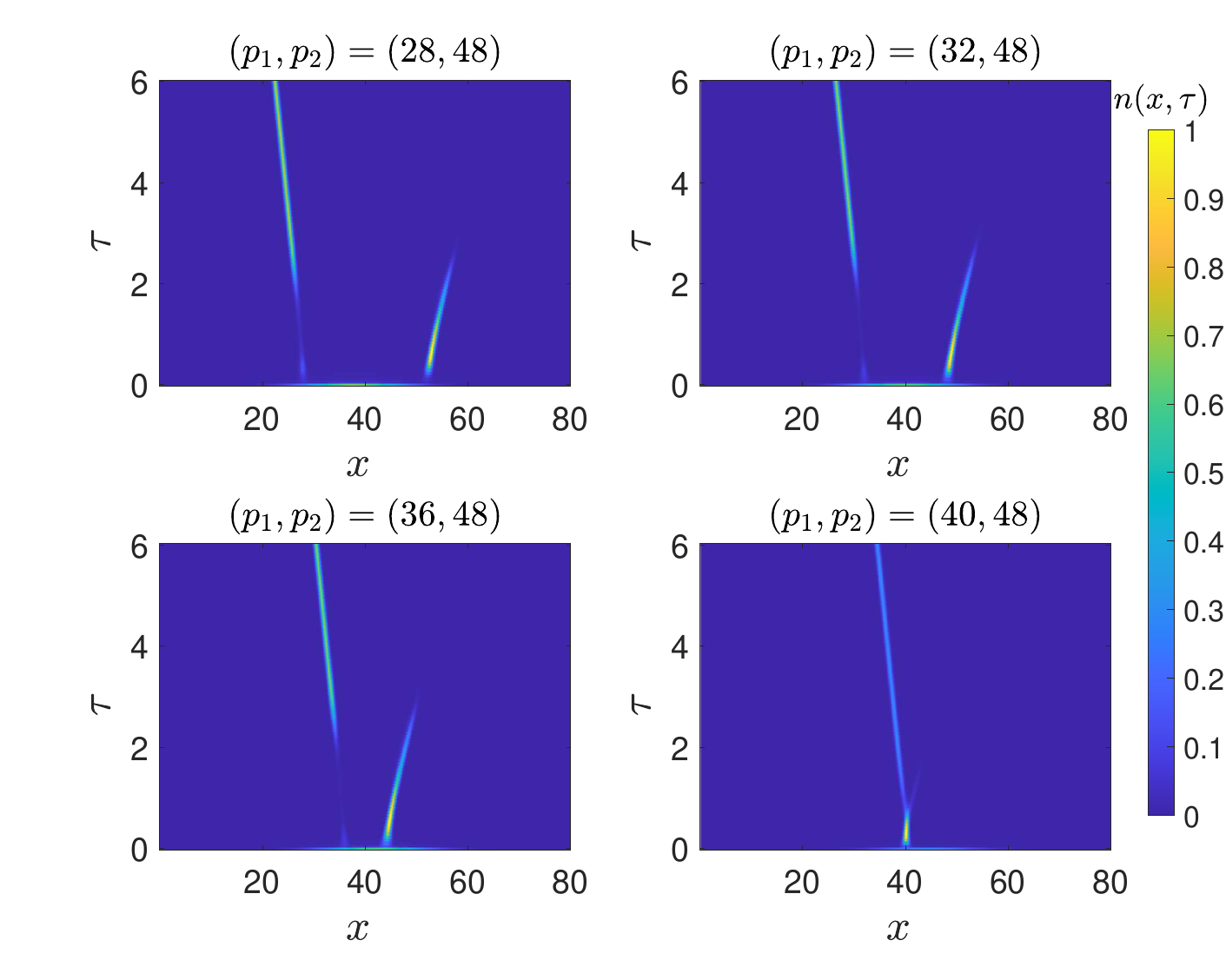}
    \caption{The long-term behaviour of $n(x,\tau)$ for a range of initial conditions. We choose $\delta=-\frac{1}{2}$, $a_{1}(x)=(\frac{7}{4}-(x-p_{1})^2)^{+}$, $a_{2}=\left(\frac{5}{2}-(x-p_{2})^2\right)^{+}$, $\varepsilon=0.1$ and $c_{1}=-1$ and $c_{2}=2.5$. We pick $(p_{1},p_{2})=(28+z,52-z)$ for $z=12,8,4,0$.}
    \label{fig:InitialConditions}
\end{figure}

 We find that for such an initial condition solution concentrates at a point which is initially at neither $\bar{x}_{i}+\varepsilon{}c_{i}t$ due to the overlapping support of $a_{1}$ and $a_{2}$. Once these have separated sufficiently, it concentrates at the lagged optima which has the maximum fitness.

\begin{figure}[H]
    \centering
    \includegraphics[width=0.9\textwidth]{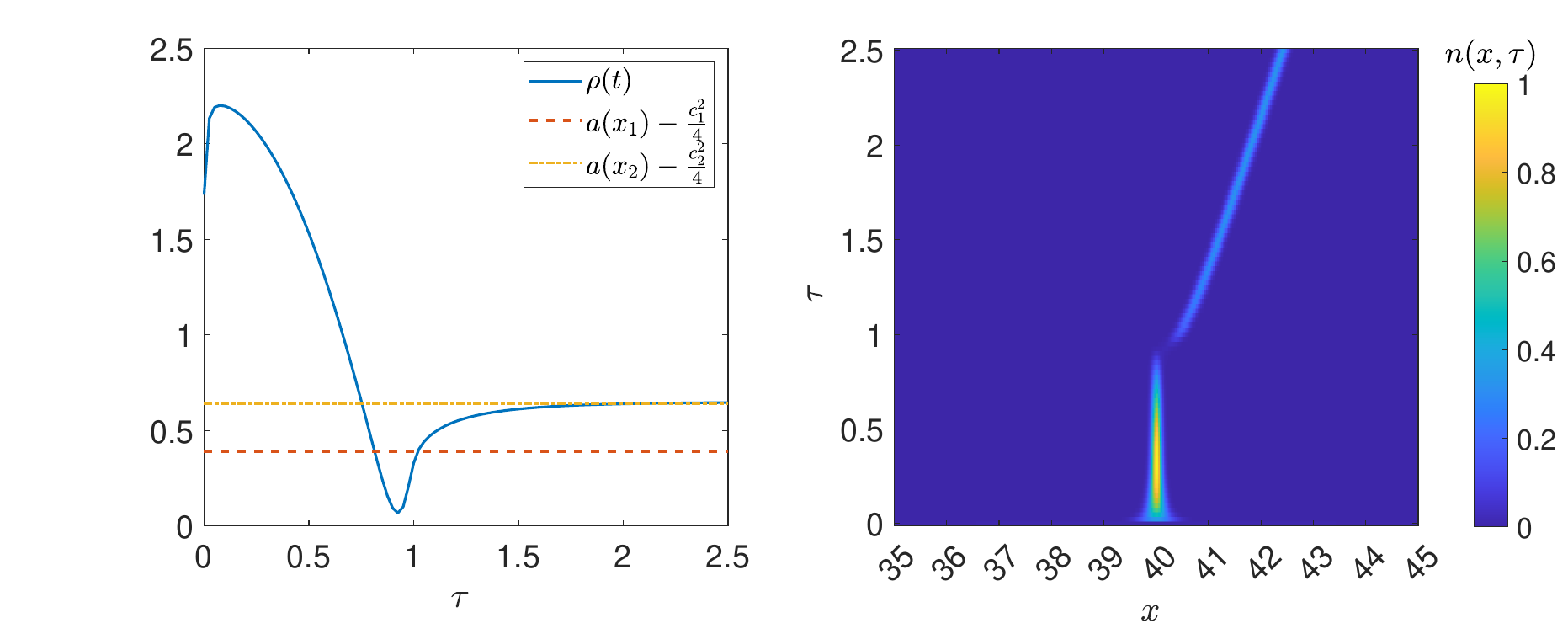}
    \caption{Dynamics of $\rho(\tau)$ and corresponding dynamics of $n(x,\tau)$. We choose $\delta=-\frac{1}{2}$, $a_{1}(x)=(\frac{7}{4}-(x-40)^2)^{+}$, $a_{2}=\left(\frac{5}{2}-(x-40)^2\right)^{+}$, $\varepsilon=0.05$ and $c_{1}=-\frac{6}{5}$ and $c_{2}=\frac{6}{5}$.}
    \label{fig:FitnessTrajectory}
\end{figure}

\section{Discussion and future work}\label{sec:Discussion}

\subsection{Discussion}
Our work has focused on understanding the long-term behaviour of solutions to a novel integro-differential model of asexual reproduction in a temporally changing environment. Our model is novel in that it allows for locally optimal traits to move at different rates. We build on the work in \cite{lorenzi2020asymptotic} which have a static fitness function with multiple global optima and in \cite{iglesias2021selection} which considers a linearly-shifting fitness function. 

Specifically, we have answered the following questions in \cref{thm:EigenvalueConvergence,thm:RescaleConvergence,thm:TwoPeaksMainResult}:
\begin{enumerate}
    \item For which conditions on the shifting rate and intrinsic growth rates does the entire population go extinct?
    \item Which trait/s dominate in long time, if the population does not go extinct?
\end{enumerate}

Theorem 1, which applies to Case 1 where there are multiple global optima shifting with the same speed, shows that the solution concentrates on a subset of the "lagged optima" in the small-mutation and long-time limit. Theorem 2 we prove a weighted rescalling will converge on only the shallowest optima. This, combined with the numerical results, suggest that it concentrates on the lagged optima behind the shallowest peaks. In Case 2, where there are multiple optima shifting at different speeds, Theorem 3 shows that if $a_{1,M}-\frac{c_{1}^2}{4}>\max\{a_{2,M}-\frac{c_{2}^2}{4},\delta\}$ then the solution concentrates on $\bar{x}_{1}$, and if the lagged fitness associated to both $a_{i}$ are negative, then the population goes extinct. In other words, if it does not go extinct, the solution concentrates on lagged optima with largest fitness.

This shows that the dominant subpopulation depends on the lagged optimum fitness and not just the true optimal fitness. From the numerical results in \cref{sec:NumericalResults}, we observe that it can first appear that one subpopulation (the one following the optimum with the true optimal fitness) is dominant only to later be overtaken by the subpopulation with the higher lagged optimum. This is particularly relevant when the population initially begins with the optimal traits overlapping. This would be an interesting feature to look out for in cell populations in an aging environment where there are known to be several evolutionary strategies to cope with the environmental change, i.e the case of decoy fitness peaks mentioned in the introduction. Such a mechanism may be responsible for a sudden emergence of cancer cells, not because they are truly more suited to the environment, but because they can adapt more easily to a changing environment.

\subsection{Limitations and Future Work}
In the specification of this model, we have made some simplifying choices that allow for clearer presentation without sacrificing the generality of the results. In Case 2, by taking $a_{i}$ with compact support and constant $\delta$ we are considering only models which have a \textit{constant} negative fitness away from the optimal traits, whereas the previous model only required that the fitness be bounded above by a negative constant. Ultimately, this should not affect the main conclusions since the solutions localise to the moving lagged optima in the limits we will consider.  Another simplification is that we choose to deal with only two $a_{i}$, each with a unique maximum, but the theory here is applicable to the case of any finite number of $a_{i}$, each with a finite number of maxima. Again, this is because the peaks all separate eventually so only interact through the competition term. 

It would be interesting to extend the results here to higher dimensional trait space which may be more relevant when there is competition between more than two traits. It seems plausible that one could determine which peak has the greatest local growth rate and thus reduce the problem to the relevant shifting one peak problem. In the higher dimensional case, however, we no longer have the explicit representation formula for the solutions to the limiting Hamilton-Jacobi Bellman equation, and further work would be required to show the concentration of the solution.

\appendix
\section{Summary of previous results}\label{sec:Preliminaryresults}

\subsection{Related models}

The problem where $a(x,t)$ is time independent and has multiple peaks in a bounded domain with Neumann boundary conditions is studied in \cite{lorenzi2020asymptotic}. This problem is given by
\begin{equation}\label{eqn:NoShift}
\begin{cases*}
        \partial_{t}n-\sigma\partial_{xx}n=n\left(a(x)-\int_{\mathbb{R}}n(y,t)dy\right), & $(x,t)\in{\Omega}\times{}\mathbb{R}^{+}$, \\
        n(x,0)=n_{0}(x), & \\
        \nabla{n}\cdot\nu(x)=0, &  $(x,t)\in{\partial{\Omega}}\times{}\mathbb{R}^{+}$,\
\end{cases*}  
\end{equation}
where $\nu(x)$ is the outward unit normal. For problems in smooth bounded domains and periodic parabolic problems (in this case, even in unbounded domains) one can use the Krein-Rutnman theorem to assert the existence of a principle eigenvalue and associated positive eigenfunction solution for the linearised problem.  The Krein-Rutman theorem is a generalisation of the well-known Perron–Frobenius theorem (which asserts the existence of a positive eigenvalue and eigenvector for square matrices where all entries are positive) to positive operators on Banach spaces \cite{lam2022introduction}. This is useful because one can usually relate the solutions for arbitrary initial conditions to the solution to the eigenvalue problem which is easier to analyse.

The problem without periodic coefficients and in an unbounded domain excludes the use of the Krein-Rutman theorem. According to \cite{berestycki2015generalizations}: "The Krein-Rutman theory cannot be applied if $\Omega$ is nonsmooth or unbounded (except for {problems in periodic settings}), because the resolvent of $L$ [the operator in question] is not compact." Our problem \eqref{eqn:UnboundedShifting} must be considered in the unbounded domain due to the term $a(x-\tilde{c}t)$. By making a coordinate change to $z=x-\tilde{c}t$ and performing a Liouville transform we can reduce $\eqref{eqn:UnboundedShifting}$ to almost exactly the problem \eqref{eqn:NoShift} except for the fact the domain is unbounded and we need to be able to carrying over results from the transformed problems to the original. In particular we cannot simply undo the Liouville transform once we find the limiting solution as $\varepsilon\rightarrow{0}$ since it depends on $\varepsilon$.

It is shown that the solution to the problem on a bounded domain \eqref{eqn:NoShift} with $\sigma=\varepsilon$ converges as $t\rightarrow\infty$ to a multiple of the solution to the eigenvalue problem
\begin{equation}\label{eqn:NoShiftEigen}
\begin{cases*}
        -\varepsilon\partial_{xx}\psi_\varepsilon-a(x)\psi_\varepsilon=\lambda_\varepsilon\psi_\varepsilon & $x\in{\Omega},$ \\
        \nabla{\psi}\cdot\nu(x)=0 & $x\in {\partial{\Omega}},$\\
        \psi\geq{0} & $x\in\Omega.$
\end{cases*}  
\end{equation}
Furthermore, $\psi_\varepsilon\xrightharpoonup[\varepsilon\rightarrow{0}]{}\sum_{i}a_{i}\delta_{x_{i}}$. From these two facts, one can conclude the solution concentrates on some subset of the maximum points of $a(x)$.  Due to this concentration, it can be expected that the fact that the domain is unbounded should not matter (i.e the highly concentrated solutions should be relatively unaffected by the boundary conditions at any fixed distance as $\varepsilon\rightarrow{0}$).

To establish the existence of a solution $\psi_\varepsilon$ to \eqref{eqn:NoShiftEigen}, and also its convergence to a sum of Dirac deltas, one makes use of Krein-Rutman theorem which gives a variational formula of the eigenvalue (also known as the Rayleigh-Quotient formula).

\begin{lemma}[Lemma 1 in \cite{lorenzi2020asymptotic}]
    There exists an eigenvector, eigenvalue pair $(\psi_\varepsilon,\lambda_\varepsilon)$
 solving \eqref{eqn:NoShiftEigen}. The eigenfunction is unique up to normalization and the eigenvalue is characterised by the following formula:
    \[\lambda_\varepsilon=\inf_{\phi\in{}H^{1}(\Omega)\backslash\{0\}}\frac{\varepsilon\int_{\Omega}|\nabla\phi|^2-\int_{\Omega}a(x)\phi^2}{\int_{\Omega}\phi^2}.\]
\end{lemma}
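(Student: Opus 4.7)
The plan is to reduce the eigenvalue problem to an application of the Krein--Rutman theorem for compact, strongly positive operators on the Banach space $X=C(\bar{\Omega})$. Since $a\in L^\infty(\Omega)$, choose a constant $M>0$ with $M-a(x)\geq 1$ for all $x\in\Omega$, so that the shifted operator $\mathcal{L}_\varepsilon u := -\varepsilon\partial_{xx} u + (M-a(x))u$, equipped with homogeneous Neumann boundary conditions, is coercive on $H^1(\Omega)$. By Lax--Milgram together with standard elliptic regularity, $\mathcal{L}_\varepsilon$ is a bijection from $\{u\in C^{2}(\bar{\Omega}) : \partial_\nu u|_{\partial\Omega}=0\}$ onto $X$, and the inverse $T := \mathcal{L}_\varepsilon^{-1}:X\to X$ is compact via Schauder estimates combined with the Arzel\`a--Ascoli theorem.

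Next I would verify that $T$ is strongly positive, i.e., that it maps $K\setminus\{0\}$ into the interior of the cone $K$ of nonnegative continuous functions. If $f\in K\setminus\{0\}$ and $u=Tf$, the weak maximum principle (using $M-a>0$) forces $u\geq 0$; the strong minimum principle then prevents $u$ from attaining the value $0$ at an interior point without vanishing identically, and the Hopf boundary lemma combined with $\partial_\nu u=0$ on $\partial\Omega$ excludes boundary zeros as well, so $u>0$ on $\bar{\Omega}$. The Krein--Rutman theorem then produces a simple largest eigenvalue $\mu_\varepsilon>0$ of $T$ with a strictly positive eigenfunction $\psi_\varepsilon$, unique up to a positive scalar, and no other eigenvalue of $T$ admits a positive eigenfunction. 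Setting $\lambda_\varepsilon:=\mu_\varepsilon^{-1}-M$ turns $(\psi_\varepsilon,\lambda_\varepsilon)$ into a solution pair for \eqref{eqn:NoShiftEigen}, and uniqueness of $\psi_\varepsilon$ up to normalization is inherited from the simplicity of $\mu_\varepsilon$.

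For the Rayleigh quotient characterization, $\mathcal{L}_\varepsilon$ is formally self-adjoint on $L^2(\Omega)$ under the Neumann condition, so multiplying the eigenvalue equation by $\psi_\varepsilon$ and integrating by parts yields $\lambda_\varepsilon=\mathcal{R}(\psi_\varepsilon)$, where $\mathcal{R}(\phi):=\bigl(\varepsilon\int_\Omega |\nabla\phi|^2 - \int_\Omega a\phi^2\bigr)/\int_\Omega \phi^2$. This already gives $\inf_{\phi\in H^1(\Omega)\setminus\{0\}}\mathcal{R}(\phi)\leq \lambda_\varepsilon$. For the reverse inequality I would minimize $F(\phi):=\varepsilon\int_\Omega |\nabla\phi|^2 - \int_\Omega a\phi^2$ subject to $\int_\Omega \phi^2=1$ via the direct method: a minimizing sequence is bounded in $H^1(\Omega)$ because the constraint controls the $L^2$ norm while boundedness of $F$ together with $a\in L^\infty$ controls the gradient, so it converges weakly in $H^1$ and strongly in $L^2$ to a minimizer $\phi_*$. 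Since $F(|\phi_*|)=F(\phi_*)$ one may assume $\phi_*\geq 0$; the Euler--Lagrange equation identifies $\phi_*$ as a nonnegative weak Neumann eigenfunction with eigenvalue $\inf\mathcal{R}$, elliptic regularity promotes it to a classical solution, and the strong maximum principle plus Hopf make it strictly positive. The uniqueness clause of Krein--Rutman then forces $\phi_*$ to be a scalar multiple of $\psi_\varepsilon$ and the eigenvalue to equal $\lambda_\varepsilon$.

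The main technical point is establishing strong positivity of $T$ under Neumann conditions, since the argument relies on the Hopf boundary lemma to exclude boundary zeros of the eigenfunction; this requires an interior-sphere condition at every point of $\partial\Omega$, i.e., $C^2$ regularity of $\partial\Omega$, which is a standing regularity assumption. With this in hand, the remaining ingredients (Lax--Milgram, Schauder estimates, Krein--Rutman, and the direct method of the calculus of variations) combine in a standard way to deliver existence, uniqueness up to normalization, and the Rayleigh quotient formula simultaneously.
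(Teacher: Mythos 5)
Your argument is correct and follows exactly the route the paper indicates: this lemma is quoted from \cite{lorenzi2020asymptotic} rather than reproved, and the paper explicitly attributes it to the Krein--Rutman theorem together with the Rayleigh-quotient characterisation, which is precisely what you carry out (shift to make the operator coercive, invert to get a compact strongly positive operator, apply Krein--Rutman, then match the variational minimiser to the principal eigenfunction). The only minor technical caveat is that solvability of the shifted Neumann problem in $C^2(\bar{\Omega})$ for merely continuous data should be routed through $L^p$/Sobolev theory rather than Schauder estimates, but this does not affect the argument.
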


Using this lemma, one gets that $\psi_\varepsilon$ concentrates by: firstly, choosing an appropriate sequence $\psi_\varepsilon$ such that $\lambda_\varepsilon\rightarrow{-a_{M}}$. Then a simple computation shows, for a test function $\eta\in{C(\overline{\Omega})}$ supported away from $\{x_{1},...x_{n}\}$, that
\[\int_{\Omega}\eta\psi_\varepsilon\xrightarrow[\varepsilon\rightarrow{0}]{}0.\]
Unfortunately, the use of the Rayleigh quotient is lost for unbounded domains except under some specific conditions on the fitness function $a(x)$, for instance, if the fitness function is confining ($\lim_{|x|\rightarrow\infty}a(x)=-\infty$). The condition comes from the study of confining potentials in quantum mechanics, and the authors of \cite{alfaro2019evolutionary} obtain some results regarding evolutionary branching under the assumption the fitness function is a confining potential.

After establishing the basic concentration results, the authors of \cite{lorenzi2020asymptotic} are able to find further constraints for the subset of $\{x_{1},...,x_{n}\}$ where the solution eventually concentrates: their Proposition 2 shows an example of a symmetric fitness function $a(x)$ which leads to equal concentration on two peaks, and their Proposition 3 refines the concentration set according the concavity of $a(x)$ at each of its maxima. They borrow this result from semi-classical analysis, as given in \cite{holcman2006singular} for operators defined on a compact Riemannian manifold, independent of boundary conditions. We will usually consider Dirichlet boundary conditions when working with approximate problems on bounded domains, of the general form:
\begin{equation}\label{eqn:NoShiftEigenDir}
\begin{cases*}
        -\varepsilon\partial_{xx}\psi_\varepsilon-a(x)\psi_\varepsilon=\lambda_\varepsilon\psi_\varepsilon &$x\in{\Omega},$ \\
        {\psi}=0 & $x\in{\partial{\Omega}},$\\
        \psi\geq{0} & $x\in\Omega.$
\end{cases*}  
\end{equation}
We borrow the same result, phrased suitably, for our problem.

\begin{lemma}[Proposition 3 in \cite{lorenzi2020asymptotic}]\label{lma:TopologicalPressure}
    Let $S(x)=|a''(x)|$ for $x\in{}M:=\{x_{1},...,x_{n}\}=\text{argmax}_{x\in\Omega}a(x)$. Let $M_{1}=\text{argmin}_{x_{j}}S(x_{j})$. Then the solution $\psi_\varepsilon$ to \eqref{eqn:NoShiftEigenDir}, satisfies, up to extraction of subsequences:
    \[\psi_\varepsilon\xrightharpoonup[\varepsilon\rightarrow{0}]{}\sum_{x_{i}\in{M}\cap{M_{1}}}a_{i}\delta_{x_{i}}.\]
\end{lemma}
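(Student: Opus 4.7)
The plan is to make the semi-classical asymptotics of the principal eigenvalue $\lambda_\varepsilon$ precise to one order beyond the leading behaviour $-a_M$, since the concavities $|a''(x_i)|$ enter only at order $\sqrt{\varepsilon}$. The Rayleigh-quotient
\[\lambda_\varepsilon \;=\; \inf_{\phi \in H^{1}_{0}(\Omega)\setminus\{0\}} \frac{\varepsilon \int_\Omega |\phi'|^2\,dx - \int_\Omega a(x)\phi^2\,dx}{\int_\Omega \phi^2\,dx}\]
is used in both directions. Once a two-sided estimate of the form $\lambda_\varepsilon = -a_M + \sqrt{\varepsilon}\sqrt{|a''(x_{i_1})|/2} + o(\sqrt{\varepsilon})$ is established for $x_{i_1}\in M_1$, one can exclude concentration on any $x_j\in M\setminus M_1$ by comparing the local contribution of that maximum to the global minimum.

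For the upper bound I would use the rescaled ground state of the local harmonic approximation. Fix $x_{i_1}\in M_1$, set $\alpha_{i_1}=|a''(x_{i_1})|/2$, and let $G$ be the $L^2$-normalised ground state of $-\partial_{yy} + \alpha_{i_1} y^2$, which has eigenvalue $\sqrt{\alpha_{i_1}}$. Take $\phi_\varepsilon(x) = \chi(x-x_{i_1})\,\varepsilon^{-1/8} G(\varepsilon^{-1/4}(x-x_{i_1}))$ with a fixed cutoff $\chi$ supported near $0$. Taylor-expanding $a$ about $x_{i_1}$ to second order and rescaling $y=\varepsilon^{-1/4}(x-x_{i_1})$ converts the quadratic form to that of the harmonic oscillator at leading order, giving
\[\lambda_\varepsilon \;\leq\; -a_M + \sqrt{\varepsilon}\,\sqrt{|a''(x_{i_1})|/2} + o(\sqrt{\varepsilon}).\]

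For the lower bound I would use a quadratic partition of unity $\sum_i \chi_i^2 + \chi_\infty^2 = 1$ where each $\chi_i$ is supported in a ball of radius $r=\varepsilon^{1/4-\gamma}$ around $x_i\in M$ and $\chi_\infty$ is supported in $\{\mathrm{dist}(x,M)\geq r/2\}$. The IMS-type identity
\[\lambda_\varepsilon \;=\; \sum_i \mathcal{E}[\chi_i \psi_\varepsilon] + \mathcal{E}[\chi_\infty \psi_\varepsilon] - \varepsilon \int_\Omega\Big(\sum_i |\chi_i'|^2 + |\chi_\infty'|^2\Big)\psi_\varepsilon^2\,dx,\]
with $\mathcal{E}[\phi] := \int_\Omega (\varepsilon|\phi'|^2 - a\phi^2)\,dx$ and $\psi_\varepsilon$ $L^2$-normalised, splits the Rayleigh quotient into contributions localised at each $x_i$. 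On $\mathrm{supp}(\chi_\infty)$ one has $-a \geq -a_M + \delta$ for some $\delta>0$, so that piece contributes $(-a_M+\delta)\|\chi_\infty\psi_\varepsilon\|^2$; comparing with the upper bound forces $\|\chi_\infty\psi_\varepsilon\|^2\to 0$. On each $\mathrm{supp}(\chi_i)$, the same rescaling as above reduces the local quadratic form to the harmonic oscillator plus lower-order corrections, yielding $\mathcal{E}[\chi_i\psi_\varepsilon] \geq \bigl(-a_M + \sqrt{\varepsilon}\sqrt{|a''(x_i)|/2}\bigr)\|\chi_i\psi_\varepsilon\|^2 - o(\sqrt{\varepsilon})$. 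Summing and combining with the upper bound shows that for any $x_i\notin M_1$, the mass $\|\chi_i\psi_\varepsilon\|^2$ must vanish as $\varepsilon\to 0$, so any weak-$\ast$ subsequential limit of $\psi_\varepsilon^2\,dx$ is supported in $M_1\subseteq M$.

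The main obstacle is the careful bookkeeping of error terms. The cubic Taylor remainder in $a$ on $\mathrm{supp}(\chi_i)$ is $O(r^3) = O(\varepsilon^{3/4-3\gamma})$, while the partition-of-unity error satisfies $\varepsilon \|\chi_i'\|_\infty^2 = O(\varepsilon^{1/2+2\gamma})$; both must be $o(\sqrt{\varepsilon})$, which forces $0<\gamma<1/12$. A secondary difficulty is controlling $\psi_\varepsilon$ on the annular "neck" regions between $\mathrm{supp}(\chi_i)$ and $\mathrm{supp}(\chi_\infty)$; this is handled by Agmon-type exponential decay estimates derived from multiplying the eigenvalue equation by $e^{2\phi(x)/\sqrt{\varepsilon}}\psi_\varepsilon$ for a suitable Lipschitz weight $\phi$ satisfying $|\phi'|^2 < a_M-a$. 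With these ingredients the weak-$\ast$ limit of $\psi_\varepsilon^2\,dx$ along any subsequence is a probability measure on $M\cap M_1$, hence a convex combination of Dirac masses as claimed.
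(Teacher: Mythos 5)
First, a point of reference: the paper does not prove this lemma at all. It is quoted verbatim as Proposition~3 of \cite{lorenzi2020asymptotic} in the appendix of preliminary results, with the remark that the underlying semi-classical analysis (due to \cite{holcman2006singular}) carries over from Neumann to Dirichlet boundary conditions. So you have supplied a proof sketch for a result the authors merely cite, and your route --- two-term eigenvalue asymptotics via the harmonic approximation for the upper bound, an IMS quadratic partition of unity for the lower bound, and Agmon estimates for the tails --- is exactly the standard semi-classical argument that underlies the citation. The scaling exponents are right (with $\sigma=\varepsilon$ the semiclassical parameter is $h=\sqrt\varepsilon$, the concentration length is $\varepsilon^{1/4}$, and the concavities enter at order $\sqrt\varepsilon$ through the local oscillator energies $\sqrt{|a''(x_i)|/2}$), and your bookkeeping constraint $\gamma<1/12$ is correct.

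Two issues deserve attention. The minor one: on $\mathrm{supp}(\chi_\infty)$ you cannot take a fixed $\delta>0$, since that support comes within distance $r/2=\tfrac12\varepsilon^{1/4-\gamma}\to 0$ of the maxima; the correct bound there is $a_M-a\gtrsim r^2=\varepsilon^{1/2-2\gamma}$, which still dominates the $O(\sqrt\varepsilon)$ scale and so still forces $\|\chi_\infty\psi_\varepsilon\|_{L^2}^2\to 0$. The substantive one: your energy comparison yields concentration of the \emph{$L^2$-normalised density} $\psi_\varepsilon^2\,dx$ on $M_1$, whereas the lemma as stated and as used in the paper (in the proof of \cref{thm:RescaleConvergence}, where $P_{R,\varepsilon}$ is normalised in $L^1$) concerns the weak-$*$ limit of $\psi_\varepsilon/\|\psi_\varepsilon\|_{L^1}$. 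These are not interchangeable here, because $\|\psi_\varepsilon\|_{L^1}\sim\varepsilon^{1/8}\|\psi_\varepsilon\|_{L^2}$ for a profile concentrating at scale $\varepsilon^{1/4}$: if $m_j(\varepsilon)=\|\chi_j\psi_\varepsilon\|_{L^2}^2$ is the mass near a non-minimal well $x_j$, Cauchy--Schwarz only gives a normalised $L^1$ mass of order $m_j^{1/2}\varepsilon^{-1/8}$, so one needs $m_j=o(\varepsilon^{1/4})$, while the IMS splitting delivers only $m_j\lesssim\varepsilon^{2\gamma}+\varepsilon^{1/4-3\gamma}$, which is never better than $\varepsilon^{1/10}$. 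Closing this gap requires identifying the local profile of $\psi_\varepsilon$ near each $x_j$ (so that the local $L^1$ and $L^2$ masses are related by the same $\varepsilon^{1/8}$ factor as at the dominant wells), or invoking the exponentially small tunnelling estimates of the Helffer--Sj\"ostrand/Holcman--Kupka theory; this finer step is precisely what the cited Proposition~3 packages, and it is missing from your sketch.
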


Although in \cite{lorenzi2020asymptotic} this result holds for a Neumann problem, the semiclassical analysis result they use remains applicable to Dirichlet problems.

We will analyse and find the concentration results for the problem \eqref{eqn:UnboundedShifting} under the scaling $\sigma=\varepsilon^2$ and $\tilde{c}=c\varepsilon$. We aim to do this by similarly analysing the eigenvalue problem, using the results from \cite{huska2008exponential} which construct solutions to the eigenvalue problem for such parabolic problems on unbounded domains as a limit of Dirichlet problems on bounded domains.

The authors of \cite{iglesias2021selection} (who take $a(x,t)=a(e(t),x-\tilde{c}t)$ for some periodic function $e(t)$ with period $T$) also make use of theory presented in  \cite{huska2008exponential}.  Under the scaling $\tilde{c}=c\varepsilon$ and $\sigma=\varepsilon^2$ they show that the shifted solution $N_\varepsilon(x,t):=n_{\varepsilon}(x+c\varepsilon{}t,t)$ will concentrate at a point $\bar{x}$ which they call the lagged optima. Letting $\bar{a}(x)=\frac{1}{T}\int_{0}^{T}a(e(t),x)$ and $x_{m}$ be the unique maxima of $\bar{a}$, the authors show that the lagged optima satisfies the following equation
\[\bar{a}(\bar{x})=a(x_{m})-\frac{\tilde{c}^2}{4\sigma}.\]
In particular, if the right-hand side is negative, the population dies out.

We are able to show that the solution $M_{\varepsilon}(x,t)$ to \eqref{eqn:WithoutDrift} concentrates on some set of finite points as $\varepsilon\rightarrow{0}$ by using theory presented \cite{huska2008exponential} to reduce the problem to one in a finite domain, then combining the results in \cite{lorenzi2020asymptotic} and \cite{iglesias2021selection} to determine these locations more precisely, and to show under what conditions we can obtain concentration to a single location. 

It remains is beyond the scope this paper to translate this result to what it means for $n_\varepsilon(x,t)$ the solution to \eqref{eqn:UnboundedShifting}. We predict that the Liouville transform will have simply shifted these concentration points but a precise characterisation of $\varepsilon\log{}(n_\varepsilon)$ would be required to find the locations. This is exactly what one studies when performing a WKB-transform, but for this particular problem difficulties are encountered in determining uniqueness of this solution.

\subsection{Generalised super- and sub- solutions}\label{subsec:Prelim0}

It is necessary to glue together and construct potentially non-smooth super- and sub- solutions throughout this paper. To this end, we review the notion of generalised sub- and super- solutions as discussed in \cite{lam2022introduction} Chapter 1, Section 1, and we paraphrase the Definition 1.1.1 here. Let $\Omega\subset\mathbb{R}^N$ be a possibly unbounded domain, define $\Omega_T=\Omega\times{}(0,T]$. Consider the following  operator (using index notation):
\[\mathcal{L}u:=-a^{ij}\partial_{ij}u-b^{i}\partial_{i}u-cu.\]
We suppose that $a^{ij},b^{i},c\in{}C^{0}(\overline{\Omega}_T)$, and that $a^{ij}$ is uniformly  elliptic, that is there is a $\lambda_0>0$ such that $\lambda_0{}|\zeta|^2\leq{}a^{ij}(x,t)\zeta_i\zeta_j$ for all $\zeta\in\mathbb{R}^N$ and $(x,t)\in\Omega_T$.

\begin{defn}
    A function $\underline{u}\in{}C(\Omega_{T})$ satisfies the inequality 
    \[\partial_{t}\underline{u}-\mathcal{L}\underline{u}\leq{}f(x,t,\underline{u},D\underline{u}),\]
    in the generalised sense if for every $(x_0,t_0)\in\Omega_T$ there exists a neighbourhood $U$ of $(x_0,t_0)$ and a function $\tilde{u}\in{}C^{2,1}(\overline{U})$ such that $\tilde{u}\leq{}u$ in $U$, $\tilde{u}\leq{}\underline{u}$ in $U$, $\underline{u}(x_0,t_0)=\tilde{u}(x_0,t_0)$, and
        \[\partial_{t}\tilde{u}(x_0,t_0)-\mathcal{L}\tilde{u}(x_0,t_0)\leq{}f(x_0,t_0,\tilde{u}(x_0,t_0),D\tilde{u}(x_0,t_0)).\]
        Then $\underline{u}$ is called a generalised subsolution.
\end{defn}
One obtains the following comparison theorem, adapted to unbounded domains (\cite{lam2022introduction}, Chapter 6, Section 6.2, Theorem 6.2.1)
\begin{theorem}\label{thm:UnboundedMaxP}
    Suppose $u$ is a generalised subsolution of 
        \[\begin{cases*}
            \partial_{t}u-\mathcal{L}u\leq{}0~~(x,t)\in\Omega\times{}(0,T),\\
            u(x,t)\leq{}0~~(x,t)\in\partial\Omega\times{}(0,T),\\
            u(x,0)\leq{}0,
        \end{cases*}\]
        which satisfies
        \[\liminf_{R\rightarrow\infty}e^{-kR^2}\left[\max{}_{\substack{x\in\Omega,|x|=R,\\0\leq{}t\leq{T}}}u(x,t)\right]\leq{}0,\]
        then $u\leq{}0$ in $\Omega\times(0,T)$.
\end{theorem}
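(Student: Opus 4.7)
The plan is to run the classical Phragmén--Lindelöf argument. The idea is to construct a Gaussian supersolution $\Phi$ that grows like $\exp(\gamma|x|^2)$ with $\gamma$ slightly larger than $k$, but only over a short time slab $[0,\tau_0]$, so that the difference $v_\eta := u - \eta\Phi$ tends to $-\infty$ as $|x|\to\infty$ uniformly in $t\in[0,\tau_0]$. Applying a weak maximum principle to $v_\eta$ on the bounded cylinder $(\Omega\cap B_{R_m}) \times [0,\tau_0]$ then gives $v_\eta \leq 0$, and sending $\eta \to 0$ yields $u \leq 0$ on the slab. Iterating on $[\tau_0,2\tau_0],\ldots$ covers $[0,T]$.

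Concretely, I would choose $\tau_0$ with $4\lambda_0 k \tau_0 < 1$, fix $\tau' > \tau_0$ so that $\gamma := 1/(4\lambda_0(\tau'-\tau_0)) > k$, and set
\[
\Phi(x,t) := (\tau'-t)^{-N/2}\exp\!\Bigl(\tfrac{|x|^2}{4\lambda_0(\tau'-t)}\Bigr)\, e^{\Lambda t}.
\]
A direct computation (using the uniform ellipticity bound $\lambda_0|\zeta|^2 \leq a^{ij}\zeta_i\zeta_j$ and local bounds on $b^i,c$, which follow from continuity on $\overline{\Omega}_T$ restricted to $B_{R_m}\times[0,\tau_0]$) shows that for $\Lambda$ large enough, $\Phi$ satisfies the strict inequality $\partial_t \Phi - \mathcal{L}\Phi \geq \Phi$ classically on $\Omega \times [0,\tau_0]$. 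The hypothesis $\liminf_{R\to\infty} e^{-kR^2}\sup_{|x|=R,\,0\le t\le T} u \leq 0$ produces a sequence $R_m \to \infty$ with $u \leq \epsilon_m e^{kR_m^2}$ on $\{|x|=R_m\}\times[0,\tau_0]$ and $\epsilon_m \to 0$, while $\eta\Phi \geq \eta C e^{\gamma R_m^2}$ there; since $\gamma > k$, this forces $v_\eta < 0$ on $\{|x|=R_m\}\times[0,\tau_0]$ for all $m$ large. On the remaining parts of the parabolic boundary, $v_\eta \leq 0$ follows from the hypotheses $u\leq 0$ on $\partial\Omega\times[0,T]$ and at $t=0$ combined with $\Phi \geq 0$.

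The decisive step is then to rule out an interior positive maximum of $v_\eta$ on $(\Omega\cap B_{R_m})\times(0,\tau_0]$. If one existed at $(x_0,t_0)$, the generalised subsolution property would yield $\tilde u \in C^{2,1}$ touching $u$ from above at $(x_0,t_0)$ and satisfying $\partial_t \tilde u - \mathcal{L}\tilde u \leq 0$ in the classical sense; then $\tilde u - \eta\Phi$ would also have an interior local maximum at $(x_0,t_0)$, and the standard first- and second-derivative conditions together with ellipticity would imply $\partial_t(\tilde u - \eta\Phi) - \mathcal{L}(\tilde u - \eta\Phi) \geq 0$ at that point, contradicting
\[
\partial_t \tilde u - \mathcal{L}\tilde u - \eta\bigl(\partial_t \Phi - \mathcal{L}\Phi\bigr) \leq 0 - \eta\Phi < 0.
\]
Hence $v_\eta \leq 0$ on the cylinder; taking $R_m \to \infty$ and then $\eta \to 0$ gives $u \leq 0$ on $\Omega \times [0,\tau_0]$, after which iterating over $\lceil T/\tau_0 \rceil$ slabs completes the proof. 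The main obstacle is bookkeeping: verifying the barrier inequality with uniform constants, and translating the generalised subsolution condition to the composite function $\tilde u - \eta\Phi$. The second point reduces cleanly to the classical parabolic maximum principle because $\Phi$ is smooth, so the touching function for $v_\eta$ is simply $\tilde u - \eta\Phi$; the first requires only the local boundedness of the coefficients on each $B_{R_m}\times[0,\tau_0]$, provided by their continuity on $\overline{\Omega}_T$.
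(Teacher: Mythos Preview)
The paper does not prove this theorem; it is quoted without proof from \cite{lam2022introduction} (Chapter~6, Theorem~6.2.1) as a background tool in the appendix. Your Phragm\'en--Lindel\"of barrier argument is the standard textbook route to such results and is essentially what one finds in that and similar references, so there is no alternative proof in the paper to compare against.

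One technical correction worth noting: in your barrier
\[
\Phi(x,t)=(\tau'-t)^{-N/2}\exp\!\Bigl(\tfrac{|x|^2}{4\mu(\tau'-t)}\Bigr)e^{\Lambda t},
\]
the constant $\mu$ must be the \emph{upper} ellipticity bound $\Lambda_0$ (with $a^{ij}\zeta_i\zeta_j\le\Lambda_0|\zeta|^2$), not the lower bound $\lambda_0$. Setting $\beta(t)=1/(4\mu(\tau'-t))$ one has $\beta'=4\mu\beta^2$, and the quadratic-in-$|x|$ contribution to $\partial_t\Phi-a^{ij}\partial_{ij}\Phi$ is $\Phi\bigl(4\mu\beta^2-4\beta^2 a^{ij}x_ix_j/|x|^2\bigr)|x|^2$; nonnegativity requires $\mu\ge\Lambda_0$, and correspondingly the slab length should obey $4\Lambda_0 k\tau_0<1$. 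This also means the argument implicitly needs a global upper bound on $a^{ij}$ (and suitable growth control on $b^i,c$), which the hypothesis $a^{ij},b^i,c\in C^0(\overline{\Omega}_T)$ does not literally supply when $\Omega$ is unbounded. In all of the paper's applications the diffusion is $\varepsilon^2\partial_{xx}$ and the lower-order coefficients are bounded, so $\lambda_0=\Lambda_0=\varepsilon^2$ and the distinction is moot; but for the general statement your barrier should be adjusted accordingly.
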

We will make frequent use of this direct consequence
\begin{theorem}\label{thm:UnboundedComparison}
    Suppose $u$ is a  solution of 
        \[\begin{cases*}
            \partial_{t}u-\mathcal{L}u=0~~(x,t)\in\mathbb{R}\times{}(0,T),\\
            u(x,0)=u_0(x),
        \end{cases*}\]
        and that $v$ is a generalised subsolution \[\begin{cases*}
            \partial_{t}v-\mathcal{L}v\leq{}0~~(x,t)\in\mathbb{R}\times{}(0,T),\\
            u(x,0)=v_0(x).
        \end{cases*}\]
       If $v_0(x)-u_0(x)\leq{0}$ and $w=v(x,t)-u(x,t)$  satisfies
        \[\liminf_{R\rightarrow\infty}e^{-kR^2}\left[\max{}_{\substack{x\in\Omega,|x|=R,\\0\leq{}t\leq{T}}}w(x,t)\right]\leq{}0,\]
        then $v(x,t)\leq{}u(x,t)$ for $(x,t)\in\mathbb{R}\times(0,T)$.
\end{theorem}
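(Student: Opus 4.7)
The plan is to obtain the theorem as an immediate application of the already-quoted Phragmén--Lindelöf-type maximum principle (\cref{thm:UnboundedMaxP}) to the difference $w := v - u$ on $\Omega = \mathbb{R}$. Three items need to be checked: that $w$ is itself a generalised subsolution of $\partial_t w - \mathcal{L} w \leq 0$, that its initial trace is non-positive, and that the asymptotic $\liminf$ hypothesis is in hand. Only the first is substantive; the other two are either assumed outright or vacuous.

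First I would verify that $w$ is a generalised subsolution. Fix any $(x_0,t_0) \in \mathbb{R} \times (0,T)$. Since $v$ is a generalised subsolution, the definition supplies a neighbourhood $U$ of $(x_0,t_0)$ and a function $\tilde v \in C^{2,1}(\overline U)$ with $\tilde v \leq v$ on $U$, $\tilde v(x_0,t_0) = v(x_0,t_0)$, and $(\partial_t \tilde v - \mathcal L \tilde v)(x_0,t_0) \leq 0$. Because $u$ is a classical solution, $u \in C^{2,1}$ with $\partial_t u - \mathcal L u \equiv 0$. Setting $\tilde w := \tilde v - u \in C^{2,1}(\overline U)$, linearity of $\mathcal L$ yields $\tilde w \leq w$ on $U$, $\tilde w(x_0,t_0) = w(x_0,t_0)$, and
\[
(\partial_t \tilde w - \mathcal L \tilde w)(x_0,t_0) = (\partial_t \tilde v - \mathcal L \tilde v)(x_0,t_0) - (\partial_t u - \mathcal L u)(x_0,t_0) \leq 0.
\]
Hence $w$ satisfies $\partial_t w - \mathcal L w \leq 0$ in the generalised sense on $\mathbb{R} \times (0,T)$.

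Next I would dispatch the remaining hypotheses of \cref{thm:UnboundedMaxP}. The initial inequality $w(\cdot,0) = v_0 - u_0 \leq 0$ is exactly the stated assumption on the initial data. The lateral boundary condition is vacuous because $\Omega = \mathbb{R}$ has empty boundary. The Phragmén--Lindelöf growth condition on $w$ is precisely the stated hypothesis. Applying \cref{thm:UnboundedMaxP} to $w$ then gives $w \leq 0$ on $\mathbb{R} \times (0,T)$, i.e.\ $v \leq u$, which is the desired conclusion.

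The only real point of care is the first step: that the class of generalised subsolutions is stable under subtraction of a $C^{2,1}$ classical solution. This is where linearity of $\mathcal L$ is essential; for a nonlinear operator one would instead need to test $v$ and $u$ against smooth barriers jointly via an inf-convolution or doubling-variables argument. Here linearity reduces everything to bookkeeping: the local smooth touching function $\tilde v$ for $v$ at $(x_0,t_0)$, minus the globally smooth $u$, is automatically a valid touching function for $w$ on the same neighbourhood, and no further analysis is needed beyond invoking \cref{thm:UnboundedMaxP}.
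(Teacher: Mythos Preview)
Your proof is correct and is exactly the approach the paper intends: the paper states \cref{thm:UnboundedComparison} only as a ``direct consequence'' of \cref{thm:UnboundedMaxP} without spelling out details, and you have filled in precisely the verification (linearity of $\mathcal{L}$ plus classical regularity of $u$ to transfer the touching function from $v$ to $w=v-u$) that makes it so.
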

The analogous theorems for generalised supersolutions are obtained by flipping the inequalities. 

\subsection{Principle Floquet Bundles for Linear Parabolic Equations with Time Dependent Coefficients}\label{subsec:Prelim}
This reviews the main result we need from \cite{huska2008exponential} and is included in the introduction for completeness. 

Consider the following two problems. First, on the whole space $\mathbb{R}^N$

\begin{equation}\label{eqn:LinParabolic}
    \partial_{t}u-\Delta{u}=A(x,t)u \text{ on } \mathbb{R}^N\times{(s,\infty)}
\end{equation}

where $s$ is some arbitrary number in $\mathbb{R}$. Secondly on a ball of radius $R$, the Dirichlet problem: 

\begin{equation}\label{eqn:LinParabolicDirichlet}
\begin{cases*}
     \partial_{t}u-\Delta{u}=A(x,t)u &$(x,t)\in{}B_{R}\times{(s,\infty)}$,\\
     u\geq{0}, &\\
     u=0 &$(x,t)\in{}\partial{}B_{R}\times{(s,\infty)}$
\end{cases*}
\end{equation}

We take the following assumptions from \cite{huska2008exponential}. Note that all constants are positive.
\begin{enumerate}[start=1,label={(C\arabic*)}]
    \item There are constants $A_{0}$ and $r_{0}$ such that $\Vert{}A\Vert_{L^{\infty}(\mathbb{R}^N\times{\mathbb{R}})}\leq{}A_{0}$, and $A(x,t)\leq{0}$ a.e for $|x|\geq{r_{0}}$.\label{assum:C1}
    \item For each $s=s_{0}$ there is a solution $\phi$ of \eqref{eqn:LinParabolic} such that $\phi(.,t)\in{L^\infty(\mathbb{R}^N)}$ for all $t\geq{}s_{0}$ and for some positive constants $\varepsilon$ and $C$, we have:
    \[\frac{\Vert\phi(.,t)\Vert_{L^\infty(\mathbb{R}^N)}}{\Vert\phi(.,s)\Vert_{L^\infty(\mathbb{R}^N)}}\geq{}Ce^{\varepsilon(t-s)} \text{ for } s_{0}\leq{s}\leq{t}.\]
\end{enumerate}
As is proved in \cite{huska2008exponential} this is equivalent to the hypothesis on \eqref{eqn:LinParabolicDirichlet}, which is  that 
\begin{enumerate}[start=3,label={(C{\arabic*})}]
\item There are constants $R_{0}$, $C_{0}$ and $\varepsilon_{0}$ such that for each $s=s_{0}$ the problem \eqref{eqn:LinParabolicDirichlet} with $R=R_{0}$ has a positive solution $u(.,s_{0})\in{}L^\infty(\mathbb{R}^N)$ and 
\[\frac{\Vert{u}(.,t)\Vert_{L^\infty(\mathbb{R}^N)}}{\Vert{u}(.,s)\Vert_{L^\infty(\mathbb{R}^N)}}\geq{}C_{0}e^{\varepsilon_{0}(t-s)} \text{ for } s_{0}\leq{s}\leq{t}.\]\label{assum:C3}
\end{enumerate}

The latter will be easier to show in general, and as remarked in \cite{huska2008exponential}, the second hypothesis holds for all $R>R_{0}$ if it is shown to hold for $R=R_{0}$.

To state the first theorem we will later use, we need to also introduce the adjoint problem to \eqref{eqn:LinParabolic}
\begin{equation}\label{eqn:LinParabolicAdjoint}
    -\partial_{t}v-\Delta{}v=A(x,t)v \text{ in } \mathbb{R}^N\times{(s,\infty)}.
\end{equation}
This is obtained from \eqref{eqn:LinParabolicDirichlet} by the change of variables $t\rightarrow{-t}$.

We have the following theorem:

\begin{theorem}[From Theorems 2.1 and 2.2 in \cite{huska2008exponential}]
There exist positive solutions $\phi$ of \eqref{eqn:LinParabolic} and $\psi$ of the adjoint problem \eqref{eqn:LinParabolicAdjoint}, both with $s=-\infty$. 

Let\[X_{1}(t)=\text{span}\{\phi(.,t)\}\] and 
\[X_{2}(t)=\{v\in{}L^\infty(\mathbb{R}^N):\int_{\mathbb{R}^N}\psi(x,t){v(x)}dx=0\}.\]
The following are true

\begin{enumerate}[label=(\roman*)]
    \item $X_{1}(t)\oplus{}X_{2}(t)=L^{\infty}(\mathbb{R}^N)$ for all $t\in\mathbb{R}$.
    \item $X_{1}$ and $X_{2}$ are invariant. That is, if $u(.,t;s,u_{0})$ is a solution to \eqref{eqn:LinParabolic} with initial condition $u_{0}\in{}X_{i}(s)$ then $u(.,r;s,u_{0})\in{}X_{i}(t)$ for all $t\geq{s}$.
    \item There are positive constants $C$ and $\gamma$ such that for any $u_0\in{}X_{2}(s)$, we have \[\frac{\Vert{u(.,t;s,u_{0})}\Vert_{L^\infty(\mathbb{R}^N)}}{\Vert{\phi(.,t)}\Vert_{L^\infty(\mathbb{R}^N)}}\leq{}Ce^{-\gamma{}(t-s)}\frac{\Vert{u_{0}}\Vert_{L^\infty(\mathbb{R}^N)}}{\Vert{\phi(.,s)}\Vert_{L^\infty(\mathbb{R}^N)}}~~(t\geq{s})~~.\]
    \end{enumerate}
\end{theorem}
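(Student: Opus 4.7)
The plan is to construct $\phi$ and $\psi$ via approximation by bounded-domain problems, deduce the decomposition from their dual pairing, and then establish exponential separation using a parabolic Harnack-type argument together with the decay hypothesis~\ref{assum:C1}.

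First, I would construct $\phi$. For each $R > R_0$ and each starting time $s$, consider the Dirichlet problem \eqref{eqn:LinParabolicDirichlet} on $B_R$. By classical principal Floquet bundle theory for bounded domains (which follows from Krein--Rutman applied to the evolution operator together with the parabolic Harnack inequality), there is a positive entire solution $\phi_R(\cdot,t)$, unique up to a positive scalar. Normalize by $\Vert \phi_R(\cdot, 0)\Vert_{L^\infty(B_R)} = 1$. Standard Schauder estimates and the Harnack inequality give local uniform bounds on $\phi_R$ independent of $R$ on compact subsets of $\mathbb{R}^N\times\mathbb{R}$. Passing to a subsequence $R_n \to \infty$ yields a non-negative entire solution $\phi$. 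To see $\phi \not\equiv 0$, use~\ref{assum:C3} to prevent the normalized mass of $\phi_R$ from escaping to infinity: \ref{assum:C1} gives comparison with an exponentially decaying supersolution outside $B_{r_0}$ (as in the proof of \cref{lma:DecayEigenvector}), so the normalized $\phi_R$ stays bounded away from zero on a fixed compact set uniformly in $R$. The strong maximum principle then gives $\phi > 0$. Construct $\psi$ analogously for the adjoint problem, viewed forward in the reversed time variable.

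Next, establish (i). Define the linear functional $\alpha_t(u) := \int_{\mathbb{R}^N} \psi(x,t)\, u(x)\, dx \big/ \int_{\mathbb{R}^N} \psi(x,t) \phi(x,t)\, dx$ on $L^\infty(\mathbb{R}^N)$. For it to be well-defined one needs $\psi(\cdot,t) \in L^1(\mathbb{R}^N)$ and the denominator finite and strictly positive: the $L^1$ bound follows from~\ref{assum:C1}, since outside $B_{r_0}$ the adjoint zeroth-order coefficient is non-positive and comparison with exponential supersolutions gives uniform exponential decay of $\psi$; positivity of the denominator follows from pointwise positivity of both $\phi$ and $\psi$. Then every $u \in L^\infty(\mathbb{R}^N)$ splits uniquely as
\[
u = \alpha_t(u)\, \phi(\cdot, t) + \bigl(u - \alpha_t(u)\, \phi(\cdot, t)\bigr),
\]
with the second summand in $X_2(t)$; uniqueness follows because $\phi(\cdot,t)\notin X_2(t)$. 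For invariance (ii), differentiate $t \mapsto \int \psi(x,t)\, u(x,t;s,u_0)\, dx$ and use the two equations to find the derivative vanishes, giving
\[
\int_{\mathbb{R}^N} \psi(x,t)\, u(x,t;s,u_0)\, dx = \int_{\mathbb{R}^N} \psi(x,s)\, u_0(x)\, dx,
\]
so $\alpha_t(u(\cdot,t;s,u_0)) = \alpha_s(u_0)$; in particular $u_0 \in X_2(s)$ yields $u(\cdot,t;s,u_0) \in X_2(t)$, and linearity preserves $X_1$ by uniqueness.

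The hard part is (iii). Following the approach of Husk\'a--Pol\'a\v{c}ik, I would study the quotient $w(x,t) := u(x,t;s,u_0) / \phi(x,t)$ for $u_0 \in X_2(s)$. A direct computation shows $w$ satisfies a linear parabolic equation with drift term $2\nabla \log \phi$ and no zeroth-order coefficient, and the condition $u_0 \in X_2(s)$ translates to $\int_{\mathbb{R}^N} \psi \phi\, w(\cdot, s)\, dx = 0$, so $w$ must change sign. The strategy is then an oscillation-decay argument: prove a Harnack-type inequality for $w$ on space-time cylinders $B_{r_0+1} \times [t,t+1]$ which, combined with the exponential decay of both $\phi$ and $\psi$ outside $B_{r_0}$ (to control the tails where $w$ could be large but contributes little to the dual pairing), yields
\[
\operatorname{osc}_{B_{r_0+1}} w(\cdot, t+1) \le \theta \cdot \operatorname{osc}_{B_{r_0+1}} w(\cdot, t)
\]
for a fixed $\theta < 1$. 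Iterating gives $\Vert u(\cdot,t;s,u_0)\Vert_{L^\infty}/\Vert \phi(\cdot,t)\Vert_{L^\infty} \le C e^{-\gamma(t-s)} \Vert u_0\Vert_{L^\infty}/\Vert \phi(\cdot,s)\Vert_{L^\infty}$. The main obstacles are (a) getting the Harnack constant $\theta$ uniform in the time slice, for which one needs time-uniform two-sided bounds on $\phi$ on compacts (extracted from the Dirichlet approximations together with~\ref{assum:C1}), and (b) controlling $w$ on the unbounded exterior $\mathbb{R}^N \setminus B_{r_0+1}$, where one constructs exponential barriers using the negativity of $A$ there to absorb truncation errors and convert a bound on the ball to a global bound.
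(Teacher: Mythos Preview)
The paper does not prove this theorem at all: it is quoted verbatim as ``From Theorems 2.1 and 2.2 in \cite{huska2008exponential}'' and used as a black-box tool in the subsequent analysis. There is therefore no proof in the paper to compare your proposal against.

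That said, your sketch is a reasonable outline of the Husk\'a--Pol\'a\v{c}ik argument itself: the construction of $\phi$ via limits of Dirichlet eigenfunctions on expanding balls is exactly what the paper records separately as \cref{lma:LocUniEigenvalueConvergence}, the dual-pairing definition of the projection is standard, and the oscillation-decay argument for the quotient $w = u/\phi$ is the core mechanism in that reference. The two places where your sketch is genuinely incomplete are the ones you flag yourself---time-uniform Harnack constants for the $w$-equation (whose drift $2\nabla\log\phi$ is not a priori bounded) and control of $w$ at spatial infinity---and these are precisely the technical heart of \cite{huska2008exponential}, requiring careful barrier constructions that go well beyond what you have written. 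But since the present paper makes no attempt to reproduce any of this, your proposal already contains far more detail than the paper does.
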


We also state a second lemma which is proved in \cite{huska2008exponential} during the course of proving the above theorem, but not presented as an independent result.

\begin{lemma}\label{lma:LocUniEigenvalueConvergence}
        A subsequence of solutions $\phi_{R_{n}}$ \eqref{eqn:LinParabolicDirichlet} will converge (as $n\rightarrow\infty$) locally uniformly in $\mathbb{R}^N\times[-t,t]$ for all $t>0$ to a solution $\phi$ of \eqref{eqn:LinParabolic}. Hence a positive entire solution to \eqref{eqn:LinParabolic} exists.
\end{lemma}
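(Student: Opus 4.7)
The plan is to obtain the entire solution $\phi$ as a locally uniform limit of the Dirichlet eigenfunctions $\phi_{R_n}$, pushing both the spatial truncation $R_n\to\infty$ and the initial time $s_n\to-\infty$ via a diagonal extraction. The main ingredients are interior parabolic regularity to get compactness, and the exponential lower bound from hypothesis \ref{assum:C3} to rule out a trivial limit.

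First, I would fix sequences $R_n\to\infty$ and $s_n\to-\infty$, and for each $n$ let $\phi_{R_n}$ denote a positive solution to \eqref{eqn:LinParabolicDirichlet} on $B_{R_n}\times(s_n,\infty)$, starting from a positive initial datum (for instance, the principal Dirichlet eigenfunction associated with the time $s_n$, or any fixed positive bump contained in $B_{R_0}$). To get a uniform handle on the family I would normalise by a fixed renormalisation condition such as $\phi_{R_n}(0,0)=1$, extending $\phi_{R_n}$ by zero outside $B_{R_n}$. The assumption \ref{assum:C1} gives $\|A\|_{L^\infty}\le A_0$, so the coefficients of the linear parabolic operator are uniformly bounded, which is exactly the setting needed to apply standard interior estimates.

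Next, I would combine two regularity tools on any cylinder $Q=K\times[-T,T]$ with $K\Subset\mathbb{R}^N$ compact, for all $n$ large enough that $K\subset B_{R_n}$ and $s_n<-T-1$. The parabolic Harnack inequality for nonnegative solutions of \eqref{eqn:LinParabolic} gives a uniform upper bound on $Q$ in terms of the normalisation $\phi_{R_n}(0,0)=1$; interior Schauder (or $L^p$-to-$C^{2+\alpha,1+\alpha/2}$) estimates then upgrade this to uniform $C^{2+\alpha,1+\alpha/2}(Q)$ bounds, because the coefficient $A(x,t)$ is bounded and the boundary is far from $K$. Arzel\`a--Ascoli on an exhaustion $Q_k=B_k\times[-k,k]$ plus a Cantor diagonal argument then yields a subsequence, still written $\phi_{R_n}$, converging locally uniformly on $\mathbb{R}^N\times\mathbb{R}$ together with its first spatial derivatives and mixed second derivatives to a limit $\phi\in C^{2,1}_{\mathrm{loc}}(\mathbb{R}^N\times\mathbb{R})$. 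Passing to the limit in the PDE is immediate from locally uniform convergence of $\phi_{R_n}$, $\partial_t\phi_{R_n}$, and $\Delta\phi_{R_n}$, so $\phi$ solves \eqref{eqn:LinParabolic} on all of $\mathbb{R}^N\times\mathbb{R}$.

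The final and, in my view, main obstacle is positivity of $\phi$: strong Harnack only guarantees $\phi\ge0$, and the limit could a priori be identically zero if the normalisation is swamped by the Dirichlet decay. To rule this out I would use hypothesis \ref{assum:C3}: the exponential growth rate $\ge C_0e^{\varepsilon_0(t-s)}$ of $\|\phi_{R_0}(\cdot,t)\|_{L^\infty}$ transfers, via the monotonicity of Dirichlet principal eigenfunctions in the domain (Rayleigh-quotient monotonicity) and the fact that $\|A\|_{L^\infty}\le A_0$, to a uniform lower bound for $\|\phi_{R_n}(\cdot,0)\|_{L^\infty(B_{R_0})}$ after renormalising at a far back time $s_n$. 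Combined with the parabolic Harnack inequality on $B_{R_0}\times[-1,1]$, this forces $\phi(0,0)>0$, and then Harnack propagates positivity to all of $\mathbb{R}^N\times\mathbb{R}$. This yields the required positive entire solution of \eqref{eqn:LinParabolic}.
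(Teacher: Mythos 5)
The paper does not actually prove this lemma: it is stated in \cref{subsec:Prelim} as a result extracted from the course of the proofs of Theorems 2.1 and 2.2 in \cite{huska2008exponential}, so there is no in-paper argument to compare against. Your sketch follows the same route as that reference -- compactness of the Dirichlet approximations via interior parabolic estimates, a diagonal extraction over an exhaustion of $\mathbb{R}^N\times\mathbb{R}$, passage to the limit in \eqref{eqn:LinParabolic}, and non-degeneracy of the limit from the exponential lower bound \ref{assum:C3} -- and the overall architecture is right.

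There is one genuine soft spot. You claim that the parabolic Harnack inequality plus the pointwise normalisation $\phi_{R_n}(0,0)=1$ yields uniform upper bounds on an arbitrary cylinder $K\times[-T,T]$. Harnack compares the supremum over an \emph{earlier} cylinder with the infimum over a \emph{later} one, so the value at $(0,0)$ controls $\sup_{K\times[-T,-\delta]}\phi_{R_n}$ but gives only a \emph{lower} bound at positive times; it does not by itself bound $\sup_{K\times[0,T]}\phi_{R_n}$. To close this you need the forward-in-time a priori bound $\Vert\phi_{R_n}(\cdot,t)\Vert_{L^\infty}\leq e^{A_0(t-\tau)}\Vert\phi_{R_n}(\cdot,\tau)\Vert_{L^\infty}$ (from $\Vert A\Vert_{L^\infty}\leq A_0$ in \ref{assum:C1} and the comparison principle), which in turn requires controlling the $L^\infty$ norm over the whole ball $B_{R_n}$, not just over $K$; this is where the structural assumption that $A\leq 0$ for $|x|\geq r_0$ enters, since it localises the supremum to $B_{r_0}$ up to the influence of the (far-past) initial data. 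The cleaner normalisation, and the one used in \cite{huska2008exponential}, is $\Vert\phi_{R_n}(\cdot,0)\Vert_{L^\infty}=1$: combined with \ref{assum:C3} one gets the two-sided control $C_0e^{\varepsilon_0(t-\tau)}\leq\Vert\phi_{R_n}(\cdot,t)\Vert_{L^\infty}/\Vert\phi_{R_n}(\cdot,\tau)\Vert_{L^\infty}\leq e^{A_0(t-\tau)}$ uniformly in $n$, which supplies both the upper bounds needed for compactness and the lower bound that rules out the trivial limit. Two minor points: with time-dependent $A(x,t)$ there is no ``principal Dirichlet eigenfunction'' to use as initial datum (a fixed positive bump in $B_{R_0}$ is fine), and the transfer of \ref{assum:C3} from $R_0$ to $R_n$ is by domain monotonicity via the comparison principle rather than by a Rayleigh quotient, which is not available in the non-autonomous setting.
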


We will mainly be interested in the following corollary of this theorem, which provides the long-term behaviour for any (sensible) initial condition:

\begin{cor}\label{cor:FloquetBundle}

Given any initial condition $u_{0}\in{}L^{\infty}(\mathbb{R}^N),$ there exists a constant $\alpha$ such that 
\[\frac{\Vert{}u(.,t;s,u_{0})-\alpha\phi(.,t)\Vert_{L^{\infty}(\mathbb{R}^N)}}{\Vert{}\phi(.,t)\Vert_{L^{\infty}(\mathbb{R}^N)}}\xrightarrow[t\rightarrow\infty]{}0.\]
\end{cor}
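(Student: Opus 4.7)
The plan is to derive this corollary directly from parts (i)--(iii) of the preceding Floquet bundle theorem by decomposing the initial data along the splitting $L^\infty(\mathbb{R}^N) = X_1(s) \oplus X_2(s)$ and using the exponential separation estimate on the stable part.

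First I would fix the initial time $s$ and use part (i) of the theorem to write $u_0 = \alpha \phi(\cdot,s) + w_0$ uniquely, where $\alpha \in \mathbb{R}$ and $w_0 \in X_2(s)$. Explicitly, projecting onto $X_1(s)$ using the adjoint eigenfunction yields
\[
\alpha = \frac{\int_{\mathbb{R}^N} \psi(x,s)\, u_0(x)\, dx}{\int_{\mathbb{R}^N} \psi(x,s)\, \phi(x,s)\, dx},
\]
and then $w_0 := u_0 - \alpha \phi(\cdot,s)$ lies in $X_2(s)$ by construction. One should check here that the denominator is positive and finite: positivity follows because both $\psi$ and $\phi$ are strictly positive, and finiteness is guaranteed by the spatial decay implicit in assumption \ref{assum:C1} (the coefficient $A(x,t)$ becomes non-positive outside a ball, which forces $\phi$ and $\psi$ to decay at infinity).

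Next I would invoke the invariance property (ii): the solution with initial data $u_0$ splits as
\[
u(\cdot,t;s,u_0) = \alpha\,\phi(\cdot,t) + w(\cdot,t),
\]
where $w(\cdot,t) = u(\cdot,t;s,w_0) \in X_2(t)$ for all $t \geq s$. This decomposition is a direct consequence of the linearity of \eqref{eqn:LinParabolic} together with the fact that $\phi$ is itself the solution corresponding to initial data $\phi(\cdot,s) \in X_1(s)$.

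Finally I would apply the exponential decay estimate (iii) to the stable component:
\[
\frac{\Vert u(\cdot,t;s,u_0) - \alpha\,\phi(\cdot,t)\Vert_{L^\infty(\mathbb{R}^N)}}{\Vert \phi(\cdot,t)\Vert_{L^\infty(\mathbb{R}^N)}} = \frac{\Vert w(\cdot,t)\Vert_{L^\infty(\mathbb{R}^N)}}{\Vert \phi(\cdot,t)\Vert_{L^\infty(\mathbb{R}^N)}} \leq C e^{-\gamma(t-s)}\,\frac{\Vert w_0\Vert_{L^\infty(\mathbb{R}^N)}}{\Vert \phi(\cdot,s)\Vert_{L^\infty(\mathbb{R}^N)}},
\]
which tends to $0$ as $t \to \infty$ since $s$ and $w_0$ are fixed.

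The only genuinely delicate point is the first step: ensuring the projection onto $X_1(s)$ is well-defined for every $u_0 \in L^\infty(\mathbb{R}^N)$, i.e.\ that $\int \psi(x,s) u_0(x)\,dx$ converges. This is where the decay of $\psi$ (inherited from the corresponding assumption on $A$ via the adjoint version of \ref{assum:C1}) plays the key role. Once integrability of $\psi$ against bounded functions is established, the rest is a straightforward unpacking of the theorem, and no further analytic work is required.
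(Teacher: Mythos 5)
Your proof is correct and follows essentially the same route as the paper, which simply notes that the corollary is a consequence of decomposing $u_0=\alpha\phi(\cdot,s)+w_0$ with $w_0\in X_2(s)$ via part (i), propagating the decomposition by invariance (ii), and applying the exponential decay estimate (iii) to the $X_2$-component. The extra care you take over the explicit projection formula is fine but not strictly needed, since the direct-sum statement in (i) already guarantees the decomposition exists for every $u_0\in L^\infty(\mathbb{R}^N)$.
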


This is a consequence of noting that we can decompose the initial condition as $u_{0}=\alpha\phi+v$ where $v\in{X_{2}(s)}$. If $u_0\notin{}X_2(0)$, for instance if $u_0\geq{0}$, then $\alpha$ is also non-zero. If $u(x,t)\geq{0}$ for all $t>0$ then $\alpha>0$.

\section{Proofs of technical lemmas}\label{Appendix}
\subsection{Proof of \cref{lma:ViscocitySoln}}\label{App:ProofofLemmaHJ}
Here we prove \cref{lma:ViscocitySoln} by providing some uniform bounds on $\psi_\varepsilon$.  For the analogous eigenvalue problem in \cite{iglesias2021selection} it is remarked that this can be done but since the computations are similar they do not provide a proof, hence we do for completeness. 

\begin{proof}

We recall that we may write 
\[p_\varepsilon(x)=e^{\frac{\psi_\varepsilon(x)}{\varepsilon}},\]
so that $\psi_\varepsilon$ then solves: 
\begin{equation}
        -\varepsilon|\partial_{xx}\psi_\varepsilon|-\left|\partial_{x}\psi_\varepsilon-\frac{c}{2}\right|^2-a(x)+\frac{c^2}{4}-\lambda_\varepsilon=0.
\end{equation}
Since $p_\varepsilon=A_\varepsilon{}p_{\varepsilon}^{\infty}$ where $A_\varepsilon=\Vert{p_\varepsilon^{\infty}}\Vert_{L^{1}(\mathbb{R})}^{-1}$, the bounds from \cref{lma:DecayEigenvector} imply
\[\varepsilon\log(A_\varepsilon)-\underline{\kappa}|x-x_\varepsilon|\leq{}\psi_\varepsilon\leq{}-\overline{\kappa}(|x|-R_0)+\varepsilon\log(A_\varepsilon).\]
Noting that the upper bound in \cref{lma:DecayEigenvector} requires that $|x_\varepsilon|<R_0$, and that $A_\varepsilon<K$ for some constant $K>0$, we have
\begin{equation}\label{eqn:PsiUniBound}
    -K_2-\underline{\kappa}|x|<\psi_\varepsilon<-\overline{\kappa}|x|+K_2
\end{equation}
This shows the locally uniform bounds.

We also need Lipschitz bounds. The Bernstein-type method presented in \cite{iglesias2021selection,figueroa2018long} can be used to obtain these. The idea is to show $\partial_{x}w_\varepsilon$ is a subsolution to some other elliptic PDE to which we can apply a maximum principle and obtain a uniform upper bound.  First we define $w_\varepsilon=\sqrt{2K_2-\psi_\varepsilon}$ which solves:
\[-\varepsilon\partial_{xx}w_\varepsilon-\left(\frac{\varepsilon}{w_\varepsilon}-2w_\varepsilon\right)|\partial_{x}w_\varepsilon|^2-c\partial_{x}w_\varepsilon=\frac{a(x)+\lambda_\varepsilon}{-2w_\varepsilon}.\]
Denote $W_{\varepsilon}=\partial_{x}w_\varepsilon$. We differentiate the above with respect to $x$, and multiply by $\frac{w_\varepsilon}{|w_\varepsilon|}$ to obtain:
\[-c\partial_{x}\vert{}W_\varepsilon\vert-\varepsilon\partial_{xx}\vert{}W_\varepsilon\vert-2\left(\frac{\varepsilon}{w_\varepsilon}-2w_\varepsilon\right)\partial_{x}\vert{}W_\varepsilon\vert{}W_\varepsilon+\left(\frac{\varepsilon}{w_\varepsilon^2}+2\right)\vert{}W_\varepsilon\vert^3=\frac{a'(x)W_\varepsilon}{-2w_\varepsilon{}|W_\varepsilon|}+\frac{\vert{}W_\varepsilon\vert(a(x)+\lambda_\varepsilon)}{2w_\varepsilon^2}.\]
Firstly, using the bounds for $a(x),a'(x)$ and the following bound
\[\sqrt{K_2}\leq{}w_\varepsilon\leq{}\sqrt{K_3|x|},\]
where $K_3$ is a sufficiently large constant,
we have the inequality
\[-c\partial_{x}\vert{}W_\varepsilon\vert-\varepsilon\partial_{xx}\vert{}W_\varepsilon\vert-2\left(K_4+K_5|x|^{\frac{1}{2}}\right)\vert{}W_\varepsilon\partial_{x}|W_\varepsilon{}|\vert+2\vert{}W_\varepsilon\vert^3\leq{}K_6+K_{7}|W_\varepsilon|,\]
for positive constants $K_i$, $i=4,5,6,7$. This implies, for large enough $\Theta$ (depending on $K_i$) that
\[-c\partial_{x}\vert{}W_\varepsilon\vert-\varepsilon\partial_{xx}\vert{}W_\varepsilon\vert-2\left(K_4+K_{5}|x|^{\frac{1}{2}}\right)\vert{}W_\varepsilon\partial_{x}|W_\varepsilon{}|\vert+2(|W_\varepsilon\vert-\Theta)^3\leq{}0.\]
We let $H(x)=\frac{R^2}{R^2-|x|^2}$ and check that $\overline{W}(x)=\Theta+H(x)$ is a strict super solution of the preceding differential inequality. Since $\partial_{x}\overline{W}(x)=\frac{2x}{R^2}H(x)^2$ and $\partial_{xx}\overline{W}(x)=\frac{2}{R^2}H(x)^2+\frac{8x^2}{R^2}H(x)^3$, see that, for $|x|<R$ we have
\begin{align*}
    &-c\partial_{x}\overline{W}-\varepsilon\partial_{xx}\overline{W}-2(K_4+K_5|x|^{\frac{1}{2}})\vert\overline{W}\partial_{x}\overline{W}\vert+2(\overline{W}(x)-\Theta)^3\\
    &=K_8\left(-\frac{cx}{R^2}H(x)^2-\frac{2\varepsilon}{R^2}H(x)^2-\frac{8x^2\varepsilon}{R^2}H(x)^3\right)-2K_8^2(K_4+K_5|x|^\frac{1}{2})\frac{2|x|}{R^2}(H(x)^2\Theta+H(x)^3)+2K_8^3H(x)^3\\
    &\geq{}\left(-K_8\frac{c}{R}-\frac{2\varepsilon}{R^2}-\frac{8\varepsilon}{R^2}-\frac{4K_8^2}{R}(K_4+K_5R^{\frac{1}{2}})(\Theta+1)+2K_8^3\right)H(x)^3.
\end{align*}
Therefore, for $K_{8,R}$ chosen sufficiently large depending on $K_i$ and $R$, we have that, for $\varepsilon<1$,
\[-c\partial_{x}\overline{W}-\varepsilon\partial_{xx}\overline{W}-2\left(K_4+K_{5}|x|^{\frac{1}{2}}\right)\overline{W}|\partial_{x}\overline{W}|+2(\overline{W}-\Theta)^3>0.\]
Since $|W_\varepsilon|\leq\frac{1}{2\sqrt{K_2}}|\partial_{x}\psi_\varepsilon|$, and the bounds \eqref{eqn:PsiUniBound} imply $\psi_\varepsilon$ has at least one critical point, we have that there exists an $x^{*}$ such that $|W_\varepsilon|(x^{*})=0$. We note that for $R>1$ we have that $K_{8,R}<K_8$ for a constant $K_8$ independent of $R$.

We now suppose that $\omega=|W_\varepsilon|(x)-\overline{W}(x)$ attains a maximum at  a point $z\in{}B_R(x^{*})$, which is necessarily in the interior. If $R$ is small enough, then the maximum $\omega(z)$ is negative, since $W_\varepsilon$ is continuous and $\overline{W}>\Theta+K_{8,R}>0$. We therefore take $R$ large enough that the maximum of $\omega(x)$ in $B_R(x^{*})$ is exactly $0$. If such an $R$ does not exist, we are done. If such an $R$ does exist then, we have that $\partial_{x}\overline{W}(z)=\partial_{x}|W_\varepsilon|(z)$, $\overline{W}(z)=|W_\varepsilon|(z)$ and $-\partial_{xx}(|W|-\overline{W})(z)\geq{0}$.  We deduce, using the above relations and differential inequalities satisfied by $|W_\varepsilon|$ and $\overline{W}$, that 
\[2(|W_\varepsilon|-\Theta)^3-2(\overline{W}-\Theta)^3<{0},\]
which is only possible if $|W_\varepsilon|(z)<\overline{W}(z)$ which contradicts $\overline{W}(z)=|W_\varepsilon|(z)$. Therefore $|W|(x)<\overline{W}(x)$ for all $x\in\mathbb{R}$. Sending $R\rightarrow{}\infty$ yields $|W_\varepsilon|(x)<\Theta+K_8$ for $x\in\mathbb{R}$.

Thus we have a Lipschitz bound. 
Then, by the Arzela-Ascoli theorem, we conclude locally uniform convergence to a continuous function which is the viscosity solution of \eqref{eqn:HJ_Adv}. The constraint follows from the normalization.

\end{proof}

\section*{Acknowlegement} The research of MHD was funded by an EPSRC Research Grant  EP/V038516/1 and a Royal Society International Exchange Grant IES$\backslash$ R3$\backslash$ 223047. A UKRI Future Leaders Fellowship supported F.S., grant no. (MR/T043571/1). This research was funded in whole or in part by UKRI (EP/V038516/1, MR/T043571/1). For the purpose of open access, a CC BY public copyright licence is applied to any AAM arising from this submission.
\printbibliography
\end{document}